\newcommand{\R}{\mathbb{R}}
\newcommand{\N}{\mathbb{N}}
\newcommand{\eps}{\varepsilon}
\newcommand{\fhi}{\varphi}
\newcommand{\weakto}{\rightharpoonup}
\newcommand{\weakstar}{\stackrel{\ast}{\rightharpoonup}}
\newcommand{\ra}{\rangle}
\newcommand{\la}{\langle}
\newcommand{\del}{\partial}
\newcommand{\sym}{\mathrm{sym}}
\DeclareMathOperator{\curl}{curl}
\DeclareMathOperator{\dom}{dom}
\DeclareMathOperator{\graph}{graph}
\newcommand{\id}{\mathrm{id}}
\def\calE{\mathcal{E}}
\def\calF{\mathcal{F}}
\def\calG{\mathcal{G}}
\def\calH{\mathcal{H}}
\def\calR{\mathcal{R}}
\def\calQ{\mathcal{Q}}
\def\calW{\mathcal{W}}
\def\frakE{\mathfrak{E}}
\def\Xp{{X}_\delta}
\newcommand\Rnn{\R^{3\times 3}}
\newcommand\Rnns{\Rnn_s}
\newtheorem{theorem}{Theorem}[section]
\newtheorem{lemma}[theorem]{Lemma}
\newtheorem{proposition}[theorem]{Proposition}
\newtheorem{assumption}[theorem]{Assumption}
\newtheorem{definition}[theorem]{Definition}
\theoremstyle{definition}
\numberwithin{equation}{section}
\def\namedlabel#1#2{\begingroup
   \def\@currentlabel{#2}%
   \phantomsection\label{#1}\endgroup
}
\begin{document}
\pagestyle{myheadings} \markboth{Strain gradient
  visco-plasticity}{Matthias R\"oger and Ben Schweizer}

\thispagestyle{empty}
\begin{center}
  ~\vskip7mm {\Large\bfseries Strain gradient visco-plasticity with
    dislocation\\[4mm] densities contributing to the energy
  }\\[8mm]
  {\large Matthias R\"oger and Ben Schweizer\footnote{Technische
      Universit\"at Dortmund, Fakult\"at f\"ur Mathematik,
      Vogelpothsweg 87, D-44227 Dortmund,
      Germany.  
    }}\\[3mm]
  \medskip

  April 18, 2017
\end{center}

\begin{center}
\vskip5mm
\begin{minipage}[c]{0.86\textwidth}
  {\small {\bfseries Abstract:} We consider the energetic description
    of a visco-plastic evolution and derive an existence result. The
    energies are convex, but not necessarily quadratic. Our model is a
    strain gradient model in which the curl of the plastic strain
    contributes to the energy.  Our existence results are based on a
    time-discretization, the limit procedure relies on Helmholtz
    decompositions and compensated compactness.
    \\[0mm]

    {\bfseries Key-words:} visco-plasticity, strain gradient
    plasticity, energetic solution, div-curl lemma}\\[0mm]

    {\bfseries MSC:} 74C10, 49J45\\[0mm]

\end{minipage}\\[1mm]
\end{center}

\section{Introduction}

The quasi-stationary evolution of a visco-plastic body is analyzed in
an energetic approach.  We use the framework of infinitesimal
plasticity with an additive decomposition of the strain. The equations
are described with three functionals, the elastic energy, the plastic
energy, and the dissipation. The three functionals are assumed to be
convex, but not necessarily quadratic. In this sense, we study a
three-fold non-linear system.

Our interest is to include derivatives of the plastic strain $p$ in
the free energy.  We are therefore dealing with a problem in the
context of strain gradient plasticity.  Of particular importance are
contributions of $\curl(p)$\, to the plastic energy, since this term
measures the density of dislocations. Attributing an energy to plastic
deformations means that hardening of the material is modelled. Since
derivatives of $p$ contribute to the energy, the model introduces a
length scale in the plasticity problem; this is desirable for the
explanation of some experimental results.

We treat a model that was introduced in \cite {MR1761129}, with
analysis available in \cite {GiacominiLussardi}, \cite
{NeffChelAlber}, and \cite {Nesenenko-Neff-2012}. We discuss the
literature below in Section \ref {ssec.literature}. The model is
entirely based on energies and is thermodynamically consistent.  Our
main result regards well-posedness of the system. We use the framework
of energetic solutions to derive an existence result.

\subsection{Model and main results}

We denote the deformation by $u$ and decompose the gradient into an
elastic and a plastic part, $\nabla u = e + p$. We do not use the
symmetrization in the decomposition, but note that only the symmetric
part $\sym(e)$ contributes to the elastic energy.  The elastic energy
$\calW_e$ describes the elastic response of the material and the
plastic energy $\calW_p$ describes hardening effects and gradient
plasticity. The precise form that we study in this contribution is
given in \eqref {eq:We-from-QH}--\eqref {eq:total-en}; in a special
case (setting $H_e = 0$, $H_p = 0$, $r=2$), the two energies read
\begin{align}
  \label{eq:W-sub-e-intro}
  \calW_e(\nabla u, p) &= \int_\Omega Q(\sym(\nabla u - p))  \,,\\
  \label{eq:W-sub-p-intro}
  \calW_p(p) &= \int_\Omega |\curl(p)|^2 + \delta |\nabla p|^2\,,
\end{align}
where $Q$ associates to a symmetric matrix an elastic energy, and
$\delta\ge 0$ is a real parameter. Below, we write the elastic energy
density in the form $W_e(F,p) = Q(\sym(F - p))$. The two energies are
accompanied by a dissipation rate functional $\calR$ with convex dual
$\calR^*$. The latter is used to express the flow rule of the plastic
strain. We do not consider positively $1$-homogeneous functionals
$\calR$ in this work; we hence treat here a visco-plastic model and
not a rate-independent model.

We use the following variables: The deformation $u$ with the two parts
$e$ and $p$ of the gradient.  The stress $\sigma$ depends on elastic
deformations and is, as usual, given by the functional derivative of
the elastic energy. The gradient plasticity model uses one additional
variable, the back-stress variable $\Sigma$. The back-stress $\Sigma$
controls the evolution of the plastic strain and is given by a
functional derivative of the total free energy $\calW = \calW_e +
\calW_p$ with respect to $p$. In its strong form, the plastic
evolution problem reads
\begin{align}
  -\nabla\cdot \sigma &= f\,,\label{eq:u}\\
  \sigma &= \sym\ \nabla_{F} W_e(\nabla u, p)\,,\label{eq:sig}\\
  -\Sigma &\in \del_{p} \calW(\nabla u, p)\,,\label{eq:Sigma}\\
  \del_t p &\in \partial \calR^*(\Sigma)\,.\label{eq:p}
\end{align}
The variational structure of the system can be made even more apparent
by writing the two equations \eqref {eq:u}--\eqref {eq:sig}
equivalently as $f \in \del_u \calW(\nabla u, p)$.

In our results, we treat more general energies than those of \eqref
{eq:W-sub-e-intro}--\eqref {eq:W-sub-p-intro}. The additional term
$H_p(p)$ of \eqref {eq:W-sub-p} allows to introduce more general
hardening laws. The term $H_e(\nabla^s u)$ of \eqref {eq:We-from-QH}
allows to associate an infinite energy to deformations with
self-penetration (we emphasize that we do not introduce growth
assumptions for $H_p$ and $H_e$).

\paragraph{Main results.}

Our main results are existence theorems for the above system of
equations. A major difficulty in the analysis is the non-linear
character of the two equations \eqref {eq:sig} and \eqref {eq:Sigma}
(in combination with the flow rule \eqref {eq:p}, which is always
non-linear). Furthermore, the plastic energy contains the quantity
$\curl(p)$; this means that the back-stress $\Sigma$ contains the
contribution $\curl(\curl(p))$.

The proofs rely on an energetic formulation, avoiding the deformation
variable $u$. This is possible through the use of a marginal
energy, compare for example \cite{RoSa06} and \cite{MielkeRossiSavare}. This concept makes the gradient flow structure of the problem
even more apparent. We construct approximate solutions through a time
discretization of the problem, solving a stationary variational
problem in each time step. At this point, the explicit time dependence
of the energies must be treated with care, since the time dependence
involves the deformation $u$.  The limit procedure relies on
(compensated) compactness properties of the sequence of approximate
solutions. In the case $\delta=0$ we need Helmholtz decompositions and
the div-curl lemma in order to perform the limit procedure. The limit
functions are shown to be energetic solutions to the system.

In our first theorem, we treat the case $\delta>0$ and quite general
energies. In this case, the plastic energy provides estimates for all
derivatives of $p$ and hence compactness of approximating sequences.
Theorem \ref {thm:main-gr} makes the following statement precise:
Given a load $f$, an initial datum $p_0$, a time horizon $T>0$, and
$\Omega\subset \R^3$, there exists an energetic solution to system
\eqref {eq:u}--\eqref {eq:p} on $\Omega\times (0,T)$.

Our second theorem treats the case that the plastic energy contains
$\curl(p)$, but not the full gradient of $p$ (i.e.: $\delta = 0$). In
this case, only estimates for certain derivatives of $p$ are at our
disposal and space-time $L^2$-compactness of approximate solutions
cannot be expected.  Compensated compactness nevertheless allows to
derive our second existence result; loosely speaking, the control of
$\curl(p)$ is dual to the control of $\nabla\cdot \sigma$.  While
quite general energies are treated in Theorem \ref {thm:main-gr}, we
can deal with the case $\delta = 0$ only for certain energies,
essentially those of \eqref {eq:W-sub-e-intro}--\eqref
{eq:W-sub-p-intro}. Theorem \ref {thm:main-eq} states the existence of
solutions for $\delta = 0$.

\subsection{Discussion and comparison with the literature}
\label{ssec.literature}

\paragraph{On the plasticity model.}

The importance of strain gradient models to describe the plastic
deformation of metal is well-known, we mention \cite {Fleck1994475}
for an early model and the discussion of experiments with thin copper
wires. For a comparison of different approaches, see \cite
{Bayley20067268}. The physical basis of a strain gradient model is a
higher order contribution to the energy: Kr\"oner's formula uses the
curl of the plastic strain to measure the dislocation density
(cp.\,\cite {VanGoethem2011} for a recent discussion). Hence $\curl(
p)$ contributes via the self-energy of dislocations to the total energy
\cite {Ortiz-etal-2000}.  For an analytical investigation of the
energy based on single crystal plasticity we refer to \cite
{ContiOrtiz2005}. In comparison to our model we note that the energy
contribution of formula (2.4) in \cite {ContiOrtiz2005} is an
$L^1$-norm of the curl, and not a squared contribution as in \eqref
{eq:W-sub-p-intro}. We regard the energy of \eqref {eq:W-sub-p-intro}
as an approximation that regularizes the mathematically derived
(single crystal) $L^1$-type energy.

The strain gradient model of this work appears e.g.\,in \cite
{MR1761129}. Writing their variables $h^p$, $\alpha^p$,
$\sigma^{\mathrm{dis}}$ as $p$, $\curl(p)$, $\Sigma$, our flow rule
\eqref {eq:p} appears as equation (12), the subdifferential
description of $-\Sigma$ in \eqref {eq:Sigma} appears in Remark 2.2 of
\cite {MR1761129}.  A first mathematical discussion of the resulting
system was performed in \cite {NeffChelAlber}. Existence results were
shown in \cite {GiacominiLussardi} and \cite {Nesenenko-Neff-2012} in
the case of quadratic energies as in \eqref {eq:W-sub-e-intro}--\eqref
{eq:W-sub-p-intro}.  Of a more general nature is the approach of \cite
{MielkeRossiSavare}, which allows to treat also non-convex energies;
we give a more detailed comparison below.

In the context of finite strain elastoplasticity (i.e.\,with a
multiplicative decomposition of the gradient) we are only aware of
models that take the full gradient of $p$ into account in the energy.
An existence result for the single time-step in this situation is
derived in \cite {MielkeMueller2006}.  The time continuous problem was
solved in \cite {MRS16} for multiplicative visco-plasticity, and in
\cite {MR2511255} for multiplicative gradient plasticity. We recall
that the full gradient of $p$ was used in these contributions and note
that quite restrictive growth conditions must be imposed on the energy
contributions.

Let us conclude this discussion of the plasticity model with a remark
concerning the non-quadratic character of the energies (and plasticity
models in general). Plasticity models with an additive decomposition
of the strain are sometimes criticized for the following reason: The
assumption of an infinitesimal deformation does not fit well with the
non-linear character of the plasticity system (in particular, of the
flow rule), since the non-linear character of a system becomes
relevant only at finite deformations. But the difficulty can be
reconciled with a proper rescaling of the system: In the case of small
deformations we consider configurations of the form $\Phi = \id +
\eps\, u$, where $\eps>0$ is a small parameter and $u$ (a quantity of
order $1$) is the rescaled deformation. If, in this scaling, a
non-linear function $\calF$ of the physical equations is of the form
$\calF(\Phi) = \calG((\nabla \Phi-\id)/\eps)$, then the quantity $u$
is described by a non-linear system that involves $\calG$---even in
the limit $\eps\to 0$.

For the above reasons, we are convinced that non-quadratic energies
should be considered in a plasticity problem, even if the framework of
infinitesimal deformations is used.  For the general setting of
plasticity models see \cite {Alber_book98} and \cite {HanReddy99}, we
also mention \cite{Davoli-Francfort-2015} for critical comments
concerning current finite strain plasticity models.

\paragraph{Methods of proof and relations to other mathematical
  results.}

We derive existence results with the help of a time
discretization. The time discrete solutions are found with variational
arguments. The energies provide a priori estimates for the sequence of
approximate solutions and we obtain easily the existence of (weak)
limits. These are our candidates for a solution. The main task is to
perform the limit procedure, i.e.\,to show that the weak limits
provide a solution.

For the limit procedure, we work in the setting of energetic solutions
\cite {MR2182832}. In this approach, a weak solution is defined as a
tuple of functions that satisfies two relations: a stability property
in every time instance $t$ and a (time integrated) energy inequality
(compare our conditions (S$_1$) and (E$_1$) in Definition \ref
{def:solution1}). Since our model uses additionally the back-stress
variable, we have to accompany the solution concept with condition
(F$_1$) to relate $\Sigma$ to the other quantities. We find it
actually helpful to work with an even more condensed system of
equations in which the deformation $u$ is not used explicitely, see
conditions (F$_2$) and (E$_2$) in Definition \ref {def:solution2}. The
limit procedure makes use of lower semicontinuity of functionals and
of compactness properties.

We already mentioned the existence results of \cite
{GiacominiLussardi} and \cite {Nesenenko-Neff-2012}. These results are
also based on time discretizations of the problem.  Since only
quadratic energies are studied in both \cite {GiacominiLussardi} and
\cite {Nesenenko-Neff-2012}, the two relations \eqref {eq:sig} and
\eqref {eq:Sigma} are linear in their case. This allows for a much
more direct derivation of the limit equations.

Let us compare our results once more with those of \cite
{MielkeRossiSavare}. Their system can be related to our model by
replacing their variables $\Phi$, $z$, $I$, $\calE^1$, $I^2$, $\psi$
by our variables $u$, $p$, $W$, $W_p$, $W_e$, $R$.  We have less
restrictive assumptions on $Q$ (compare the square-root growth
assumption on $D_z W$ in (W$_3$) of \cite
{MielkeRossiSavare}). Another difference is that in \cite
{MielkeRossiSavare} the values of $z$ (our $p$) are confined to a
compact set by using an indicator function in the energy. Furthermore,
their exponent $r$ in $|\nabla p|^r$ must be larger than the space
dimension.  We finally recall that \cite {MielkeRossiSavare} always
uses the full gradient of $z \sim p$, while we treat also the case
$\delta=0$.  On the other hand: We demand the convexity of all
energies. In this point the contribution \cite {MielkeRossiSavare}
treats a much more general setting.

For other results containing the full gradient (the case $\delta >0$),
we mention the book by Roubicek \cite{Roubicek-book} and note that the
explicit time dependence of the energy (that is present in our model
due to the dependence of $\calE_1$ on $f$) is not covered in his
result ($\del_t f \equiv 0$ in our setting). The same remark is valid
concerning the abstract result of Colli and Visintin \cite{MR1070845}.

\medskip The remainder of this article is organized as follows. In
Section \ref {sec.model-ass} we state the model in a precise way and
formulate the assumptions on the data.  In Section \ref
{sec.stability} we show a stability property of our solution concept:
Under certain assumptions, sequences of approximate solutions converge
to solutions.  In Section \ref {sec:tds} we perform the time
discretization and construct approximate solutions. The stability
results of Section \ref {sec.stability} can be applied and yield that
the limit functions provide a solution to the time-continuous model.

\section{Model equations and assumptions}
\label{sec.model-ass}

For a domain $\Omega\subset \R^3$ and a space-time cylinder $\Omega_T
:= \Omega\times (0,T)$ we denote the deformation by $u :\Omega_T\to
\R^3$ and the plastic strain by $p:\Omega_T\to \Rnn$. Following \cite
{MR1761129} and \cite {NeffChelAlber}, we do not impose that the
plastic strain is symmetric; to incorporate rotational invariance of
the energy, the elastic energy depends only on $e = \sym (\nabla u -
p)$. We write here $\Rnns$ for the space of symmetric $n\times n$
matrices and denote the projection onto symmetric matrices by $\sym :
\Rnn \to \Rnns$, $F\mapsto \frac12 (F + F^T)$. We use also the
notation $\nabla^s u := \sym\, \nabla u$ such that $\nabla^s u = e +
\sym(p)$.

Despite its importance in mechanics, for simplicity of the
presentation, we do not incorporate the decomposition into spherical
and deviatoric parts in our model.

We use an energetic approach and formulate the plasticity equations
with energies and dissipation rate functionals. We use an elastic
energy $\calW_e$, a plastic energy $\calW_p$, and the total energy
$\calW = \calW_e + \calW_p$.  The energies are based on energy density
functions. We use the following four functions:
\begin{equation}
  \label{eq:QHKR}
  \begin{split}
    &Q : \Rnns \to \R,\ e\mapsto Q(e)\,,\qquad
    R : \Rnn \to \R,\ q\mapsto R(q)\,,\\
    &H_e : \Rnns \to \R,\ \nabla^s u\mapsto H_e(\nabla^s u) ,\qquad 
    H_p : \Rnn \to \R,\ p\mapsto H_p(p) \,.
  \end{split}  
\end{equation}
The elastic energy density $W_e: \Rnn \times \Rnn \to \R$ is
\begin{equation}
  W_e(\nabla u, p) :=  Q(\sym(\nabla u - p)) + H_e(\nabla^s u) \,.
  \label{eq:We-from-QH}
\end{equation}

We are interested in a strain gradient plasticity model. To introduce
derivatives of $p$ in the energies, we use a factor $\delta\ge 0$ and
an exponent $r\in \R$, $r> \frac{6}{5}$. We furthermore consider the
curl of $p = (p_{i,j})_{i,j}$, which we define row-wise: $(\curl
p)_{k,.} := \curl (p_{k,.})$.  With this definition, every smooth
function $\fhi:\Omega\to \R^3$ satisfies $(\curl \nabla\fhi)_{k,.}  =
\curl ((\nabla\fhi)_{k,.}) = \curl \nabla\fhi_k = 0$.  For functions $u:
\Omega\to\R^{3}$, $p: \Omega\to\Rnn$ we define the energies
\begin{align}
  \label{eq:W-sub-e}
  \calW_e(\nabla u, p) &:= \int_\Omega W_e(\nabla u, p) \,,\displaybreak[3]\\
  \label{eq:W-sub-p}
  \calW_p(p) &:=  \int_\Omega \left\{H_p(p) 
    + |\curl\, p|^2 + \delta |\nabla p|^r \right\}\,,\displaybreak[3]\\
  \calR(q) &:= \int_\Omega R(q)\,,
\end{align}
and use the two convex duals $R^*$ and $\calR^*$. The total energy is
\begin{equation}
  \label{eq:total-en}
  \calW(\nabla u, p) := \calW_e(\nabla u, p) +
  \calW_p(p)\,.  
\end{equation}

\subsubsection* {General assumptions} We next collect our assumptions
on the energy functionals, on initial and boundary conditions, and on
the applied loads.

\begin{assumption}[Energy and dissipation
  functional]\label{ass:general}
  Let $Q: \Rnns \to [0,\infty)$ and $R, R^*:\Rnn \to [0,\infty)$ be
  convex and continuous, and let $H_e: \Rnns \to [0,\infty]$ and $H_p :
  \Rnn \to [0,\infty]$ be convex, proper and lower-semicontinuous,
  with $0\in \dom(H_e)$.

  We assume that $Q,R,R^*$ have quadratic growth: For $0<c<C$ holds
  \begin{equation}
    c |\zeta|^2 \,\leq\, Q(\zeta)\,\leq\, C
    |\zeta|^2   \,,\label{eq:cond-QR-1}\\
  \end{equation}
  for all matrices $\zeta\in \Rnn_s$, and
  \begin{align}
    &c |\xi|^2 \,\leq\, R(\xi)\quad \text{ and }\quad 
    R^*(\xi)\,\leq\, C |\xi|^2\,,\label{eq:cond-QR-2}\\ 
    &c |\xi|^2 \,\leq\, R^*(\xi)\quad \text{ and }\quad  
    R(\xi)\,\leq\, C |\xi|^2\,,\label{eq:cond-QR-3}
  \end{align}
  for all matrices $\xi\in \Rnn$. 
\end{assumption}

\paragraph {Function spaces and boundary conditions.}
We always assume that $\Omega\subset \R^3$ is a bounded
Lipschitz-domain. In the case $\delta=0$, we assume additionally that
$\Omega$ is simply connected with connected boundary, and that
$\del\Omega$ is of class $C^{1,1}$ (this assumption could be replaced
by a convexity requirement). We always assume that we are given a
relatively open non-empty subset $\Gamma_D\subset \del\Omega$.

To impose a Dirichlet boundary condition on $\Gamma_D$ we introduce
the function space $H^{1}_D(\Omega; \R^3)$ of functions in
$H^1(\Omega)$ with vanishing trace on $\Gamma_D$. Its dual space is
denoted as $H^{-1}_D(\Omega;\R^3)$.  For the prescribed load we assume
\begin{align}
  f\in L^2(0,T;L^{2}(\Omega;\R^3)) \cap
  H^1(0,T;H^{-1}_D(\Omega;\R^3))\,. \label{ass:f}
\end{align}
For $p$ we consider an initial condition 
\begin{align}
  p_0\in L^2(\Omega;\Rnn)\,. \label{ass:p0}
\end{align}

We denote by $H_0(\Omega,\curl)$ the space of all $p\in L^2(\Omega;
\Rnn)$ that have zero tangential trace $p\times\nu$ (where $\nu$ is
the exterior normal on $\del\Omega$) in the sense that
\begin{gather}
  \int_\Omega \big(\varphi : \curl p - \curl\varphi : p \big) \,=\,
  0\quad\text{ for all }\varphi\in
  C^1(\bar\Omega;\Rnn)\,. \label {eq:bdry-p}
\end{gather}
We equip this space with the inner product
\begin{align*}
  (p,q)_{H_0(\Omega,\curl)} \,=\, (p,q)_{L^2(\Omega;\Rnn)} + (\curl
  p,\curl q)_{L^2(\Omega;\Rnn)}
\end{align*}
and the induced norm $\|\cdot\|_{H_0(\Omega,\curl)}$. Then
$H_0(\Omega,\curl)$ is a Hilbert space and coincides with the
completion of $C^\infty_c(\Omega;\Rnn)$ with respect to
$\|\cdot\|_{H_0(\Omega,\curl)}$, see \cite[Theorem 2.6]{GiRa79}.

We note that Assumption \ref {ass:general} implies that $\calR,
\calR^*:L^2(\Omega; \Rnn)\to [0,\infty)$ and the function
$\calQ:L^2(\Omega; \Rnns)\to [0,\infty)$ of \eqref {eq:calQ} are
continuous.

\paragraph{The strong formulation.}

In the strong formulation, we seek functions $u,\sigma,p,\Sigma$ that
satisfy the non-linear system \eqref {eq:u}--\eqref {eq:p} in a
classical sense.  Let us collect the equations in the special case
$H_e \equiv 0$, $H_p \equiv 0$, using $e = \sym(\nabla u - p)$.
Equation \eqref{eq:sig} becomes $\sigma = \sym\ \nabla_{F} Q(e)$, the
standard equation for the stress.  We note that, strictly speaking,
this formula does not need the symbol ``$\sym$'': The gradient of $Q$
on the space of symmetric matrices is automatically a symmetric
matrix.  In the same setting, \eqref {eq:Sigma} becomes $\Sigma =
\nabla_{F} Q(\nabla^s u - p) - \nabla_{p} \calW_p(p) = \sigma - Lp$
with the positive differential operator $L$ which reads $L p = 2
\curl\curl(p) - \delta \Delta_r p$.

\subsubsection*{Weak formulation in the variables $(u,p,\Sigma)$}

\begin{definition}[Weak solution in primary
  variables]\label{def:solution1}
  We call $(u,p,\Sigma)$ a weak solution of \eqref{eq:u}--\eqref
  {eq:p} (with boundary conditions) iff the following holds:
  \begin{enumerate}
  \item[(R$_1$) \namedlabel{it:R1}{(R$_1$)}] 
    Regularity and boundary conditions:
    \begin{align}
      \label{eq:spaces-soln}
      &u\in L^2(0,T;H^1_D(\Omega;\R^3))\,,\quad 
      \Sigma\in L^2(0,T;L^2(\Omega; \Rnn))\,,\\
      &p\in H^{1}(0,T;L^2(\Omega; \Rnn)) \cap L^2(0,T;H_0(\Omega,\curl))\,,\\
      &p\in L^\infty(0,T;W^{1,r}(\Omega))\quad\text{ if } \delta>0\,,
    \end{align}
    and $p|_{t=0} = p_0$.
  \item[(S$_1$) \namedlabel{it:S1}{(S$_1$)}] Pointwise energy
    minimization: Equations \eqref{eq:u}--\eqref {eq:sig} are
    satisfied in the following sense: for a.e. $t\in (0,T)$ there
    holds
    \begin{align}
      \label{eq:energy-min}
      \int_{\Omega} W_e(\nabla u(t), p(t)) -
      \int_{\Omega} f(t)\cdot u(t)
      \le  \int_{\Omega} W_e(\nabla \fhi, p(t)) 
      -  \int_{\Omega} f(t)\cdot \fhi
    \end{align}
    for every $\fhi\in H^1_D(\Omega;\R^3)$.
  \item[(F$_1$) \namedlabel{it:F1}{(F$_1$)}] Back-stress variable:
    Instead of \eqref {eq:Sigma} we demand (here, the dual is formed
    with respect to the variable $p$, the argument $\nabla u$ is
    fixed): Almost everywhere in $(0,T)$ holds
    \begin{equation}\label{eq:F1}
      \calW(\nabla u, p) + \calW^*(\nabla u, -\Sigma) =  
      \la -\Sigma, p \ra\,.
    \end{equation}
  \item[(E$_1$) \namedlabel{it:E1}{(E$_1$)}] Energy inequality:
    Instead of the flow rule \eqref {eq:p} we demand that for
    a.e. $t\in (0,T)$ holds
    \begin{align}
      \notag
      & \left[\calW(\nabla u(s), p(s)) -\int_\Omega f(s)\cdot
        u(s)\right]_{s=0}^t + \int_0^t \left\{
        \calR(\del_t p(s)) +
        \calR^*(\Sigma(s)) \right\}\, ds\\
      &\qquad \le -\int_0^t \langle \del_t f(s),u(s)\rangle\,
      ds\,.
      \label{eq:energy-ineq-first}
    \end{align}
  \end{enumerate}
\end{definition}

\subsubsection*{Weak formulation in the variables $(p,\Sigma)$}

Given an external load $f\in H^{-1}_D(\Omega;\R^3)$ we define two marginal
functionals that assign to $p\in L^2(\Omega;\Rnn)$ an energy:
\begin{align}
  \label{eq:reduced-energy-1}
  \calE_1(p;f) &:= \inf \left\{ \calW_e(\nabla \fhi, p) -
    \langle f, \fhi \rangle\Big|\, \fhi\in
    H^1_D(\Omega;\R^3) \right\}\,,\\
  \calE(p;f) &:= \calE_1(p;f) + \calW_p(p)\,.
  \label{eq:reduced-energy-total}
\end{align}
We denote by $\frakE(p,f)$ the set of all $u\in H^1_D(\Omega;\R^3)$
that attain the infimum in \eqref{eq:reduced-energy-1}.  This set is
non-empty, see Lemma \ref{lem:marginal}.

With the marginal functionals $\calE_1$ and $\calE$ we can give an
alternative formulation of the equations in the variables
$(p,\Sigma)$, avoiding $u$.  This reduction is based on the fact that
requirement \ref{it:S1} is equivalent to: For almost every $t\in
(0,T)$ holds the energy minimization property
\begin{align}
  \label{eq:weak-energy-min-note}
  &\int_{\Omega} W_e(\nabla u(t), p(t)) - \int_{\Omega} f(t)\cdot u(t)
  = \calE_1(p(t); f(t))\,.
\end{align}

\begin{definition}[Weak solution of the condensed
  system]\label{def:solution2}
  We call $(p,\Sigma)$ a weak solution of \eqref{eq:u}--\eqref {eq:p}
  (with boundary conditions) iff the following holds:
  \begin{enumerate}
  \item[(R$_2$) \namedlabel{it:R2}{(R$_2$)}] On $p$ and $\Sigma$ we
    demand the properties of Item \ref{it:R1}.
  \item[(F$_2$) \namedlabel{it:F2}{(F$_2$)}] Back-stress variable:
    Almost everywhere in $(0,T)$ holds
    \begin{equation}\label{eq:F2}
       \calE\big(p;f\big) + \calE^*\big(-\Sigma; f\big)  =  
       \la -\Sigma, p \ra\,.
    \end{equation}
  \item[(E$_2$) \namedlabel{it:E2}{(E$_2$)}] Energy inequality: For
    almost all $t\in (0,T)$ holds
    \begin{equation}
      \label{eq:E2}
      \begin{split}
        &\left[ \calE\big(p(s);f(s)\big)\right]_{s=0}^t +
        \int_0^t \left\{ \calR(\del_t p(s)) + 
          \calR^*(\Sigma(s)) \right\}\, ds\\
        &\qquad \le -\int_0^t \inf_{\tilde u\in \frakE(p(s),f(s))}
        \big\la \partial_t f(s), \tilde u\big\ra\, ds\, .        
      \end{split}      
    \end{equation}
  \end{enumerate}
\end{definition}
The (negative of the) integrand on the right-hand side of
\eqref{eq:E2} corresponds to a generalized time-derivative of the
functional $\varphi\mapsto \calW(\nabla\varphi,p(s)) - \la
f(s),\varphi\ra$, compare Example 3 in the introduction in
\cite{MielkeRossiSavare}.

Formally, the three solution concepts are equivalent. Rigorous
statements are collected in the following proposition.

\begin{proposition}[Equivalence of solution
  concepts]\label{prop:solution_concepts}
  The following holds:
  \begin{enumerate}
  \item Let $(p, \Sigma)$ be a solution according to Definition \ref
    {def:solution2}. Furthermore, let us assume that for almost all
    $t\in (0,T)$ the set $ \frakE(p(t),\Sigma(t))$ consists of a
    unique element $u(t)$.  Then $u\in L^2(0,T;H^1_D(\Omega;\R^3))$
    holds and the triple $(u,p,\Sigma)$ is a solution according to
    Definition \ref {def:solution1}.
  \item Let $(u,p,\Sigma)$ be a weak solution according to Definition
    \ref{def:solution1}. Let $Q$, $H_e$, and $H_p$ be
    Fr{\'e}chet-differentiable and let the solution have the
    regularity $\partial_t p \in L^2(0,T;L^2(\Omega;\Rnn))$, $p\in
    L^2(0,T;H^2(\Omega;\Rnn))$, $u\in L^2(0,T;H^2(\Omega;\R^3))$, and
    $\Sigma\in L^2(0,T; H^1(\Omega; \Rnn))$. Then, with $\sigma$
    defined by \eqref{eq:sig}, $(u,\sigma,p,\Sigma)$ is a strong
    solution to \eqref{eq:u}--\eqref{eq:p}.
  \item Let $(u,\sigma,p,\Sigma)$ be a strong solution according to
    \eqref{eq:u}--\eqref{eq:p} and let $Q$, $H_e$, $H_p$ be
    Fr{\'e}chet-differentiable. Then $(u,p,\Sigma)$ is a weak solution
    according to Definition \ref{def:solution1}.
  \end{enumerate}
\end{proposition}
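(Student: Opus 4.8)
The plan is to establish the three implications by converting, in each direction, the defining relations of one solution concept into those of another, using only two elementary tools: the equivalence ``Fenchel--Young equality $\Leftrightarrow$ subdifferential inclusion'', and a chain rule for $s\mapsto\calW(\nabla u(s),p(s))-\la f(s),u(s)\ra$ along the trajectory. For the first claim let $(p,\Sigma)$ satisfy \ref{it:R2}, \ref{it:F2}, \ref{it:E2}, and suppose $\frakE(p(t),f(t))$ --- nonempty by Lemma~\ref{lem:marginal} --- equals a singleton $\{u(t)\}$ for a.e.\ $t$. Measurability of $t\mapsto u(t)$ comes from a measurable selection argument, the argmin in \eqref{eq:reduced-energy-1} depending measurably on the measurable data $(p(t),f(t))$ and uniqueness making the selection canonical. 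The bound $u\in L^2(0,T;H^1_D(\Omega;\R^3))$ follows from $Q(\zeta)\ge c|\zeta|^2$, from $0\in\dom(H_e)$, and from Korn's inequality on $H^1_D$, because \ref{it:E2} forces $s\mapsto\calE(p(s);f(s))$, hence $s\mapsto\calE_1(p(s);f(s))=\calW_e(\nabla u(s),p(s))-\la f(s),u(s)\ra$, to be integrable. Property \ref{it:S1} is immediate: it is exactly \eqref{eq:weak-energy-min-note}, which holds by the choice of $u(t)$.

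The core of the first claim is the passage from \ref{it:F2} to \ref{it:F1}. I set $g(q):=\calW(\nabla u,q)$, the total energy sliced at the fixed minimizer, and $h(q):=\calE(q;f)+\la f,u\ra$; both are convex, proper and lower semicontinuous on $L^2(\Omega;\Rnn)$ (joint convexity of $(\fhi,q)\mapsto\calW_e(\nabla\fhi,q)$, the arguments of $Q$ and $H_e$ being affine in $(\fhi,q)$, makes $\calE_1(\,\cdot\,;f)$ convex). Since $u$ is an admissible competitor in \eqref{eq:reduced-energy-1} one has $g\ge h$ everywhere, while $g(p)=h(p)$ precisely because $u\in\frakE(p,f)$, i.e.\ by \eqref{eq:weak-energy-min-note}. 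From $g\ge h$ with equality at $p$ it follows that $\partial h(p)\subseteq\partial g(p)$. A constant shift does not change the subdifferential, so \ref{it:F2} --- equivalent by Fenchel--Young to $-\Sigma\in\partial\calE(\,\cdot\,;f)(p)=\partial h(p)$ --- gives $-\Sigma\in\partial g(p)$, which by Fenchel--Young is \eqref{eq:F1}. Finally \ref{it:E1}: uniqueness collapses the infimum on the right-hand side of \eqref{eq:E2} to $\la\partial_t f(s),u(s)\ra$, and on the left $\calE(p(s);f(s))=\calW(\nabla u(s),p(s))-\int_\Omega f(s)\cdot u(s)$ by \eqref{eq:weak-energy-min-note}; substituting turns \eqref{eq:E2} into \eqref{eq:energy-ineq-first}.

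For the second claim, let $(u,p,\Sigma)$ be a weak solution in the sense of Definition~\ref{def:solution1} with the stated extra regularity and $Q,H_e,H_p$ Fr\'echet-differentiable. Taking the first variation of $\fhi\mapsto\int_\Omega W_e(\nabla\fhi,p(t))-\int_\Omega f(t)\cdot\fhi$ at the minimizer $u(t)$ from \ref{it:S1} yields $\int_\Omega\nabla_F W_e(\nabla u,p):\nabla\fhi=\int_\Omega f\cdot\fhi$ for all $\fhi\in H^1_D$; because $Q,H_e$ act on symmetric matrices, $\nabla_F W_e(\nabla u,p)$ is itself symmetric, so with $\sigma$ defined by \eqref{eq:sig} this is the weak form of \eqref{eq:u}, which $u,p\in L^2(0,T;H^2)$ upgrades to an a.e.\ identity together with the natural traction condition on $\partial\Omega\setminus\Gamma_D$. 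By differentiability, $\partial_p\calW(\nabla u,\cdot)$ is single-valued, so \ref{it:F1} --- always equivalent to $-\Sigma\in\partial_p\calW(\nabla u,p)$, i.e.\ \eqref{eq:Sigma} --- is a pointwise identity, with $\curl\curl p$ and the $r$-Laplacian term well defined thanks to $p\in L^2(H^2)$ and $\Sigma\in L^2(H^1)$. For the flow rule I combine \ref{it:E1} with the chain rule $\frac{d}{ds}[\calW(\nabla u(s),p(s))-\la f(s),u(s)\ra]=\la-\Sigma(s),\partial_t p(s)\ra-\la\partial_t f(s),u(s)\ra$: minimality of $u(s)$ produces an envelope/Danskin cancellation of the term differentiating $\calW$ through $\nabla u$ against the corresponding part of $\frac{d}{ds}\la f,u\ra$, and \eqref{eq:Sigma} identifies the $p$-derivative. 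Inserting into \eqref{eq:energy-ineq-first} leaves $\int_0^t[\calR(\partial_t p)+\calR^*(\Sigma)-\la\Sigma,\partial_t p\ra]\,ds\le0$; since the integrand is $\ge0$ by Fenchel--Young it vanishes a.e., and the equality case gives $\partial_t p\in\partial\calR^*(\Sigma)$, hence, passing from $\calR,\calR^*$ to their integrands $R,R^*$, the pointwise flow rule \eqref{eq:p}. The third claim runs the same computation backwards: \ref{it:S1} follows from \eqref{eq:u}--\eqref{eq:sig} by testing with $\fhi-u(t)$ and using convexity of $F\mapsto W_e(F,p)$; \ref{it:F1} is the Fenchel--Young reformulation of \eqref{eq:Sigma}; and \ref{it:E1} follows from the Fenchel--Young equality for \eqref{eq:p} together with the same chain rule, the term through $\nabla u$ now being handled by testing \eqref{eq:u}--\eqref{eq:sig} with $\partial_t u$.

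I expect the main obstacle to be the rigorous justification of the chain rule for $s\mapsto\calW(\nabla u(s),p(s))-\la f(s),u(s)\ra$ under the limited regularity at hand --- in particular the envelope-type cancellation of the term differentiating through $u$, and the handling of the possibly non-quadratic contribution $\delta\int_\Omega|\nabla p|^r$, which requires the chain rule for convex functionals along the $H^1$-in-time curve $s\mapsto p(s)$. A secondary difficulty is, in the first claim, the measurable selection together with the $L^2(0,T;H^1_D)$-bound for $u$, and the convex-analytic step $\partial h(p)\subseteq\partial g(p)$ linking the marginal functional $\calE$ to the sliced functional $\calW(\nabla u,\cdot)$.
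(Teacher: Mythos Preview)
Your proposal is correct and follows essentially the same route as the paper. The key step in Item~1 --- passing from \ref{it:F2} to \ref{it:F1} via the inequality $\calW(\nabla u,\cdot)\ge \calE(\cdot;f)+\la f,u\ra$ with equality at $p$, yielding the subdifferential inclusion $\partial\calE(p;f)\subseteq\partial_p\calW(\nabla u,p)$ --- is exactly the paper's argument, only phrased in subdifferential rather than conjugate language; and your chain-rule treatment of Items~2 and~3 (envelope cancellation of the $\partial_t u$-terms by balance of forces, then Fenchel--Young equality forcing the flow rule) matches the paper's computation line for line. Minor differences: the paper obtains measurability of $t\mapsto u(t)$ concretely via simple-function approximation and the Pettis theorem rather than an abstract measurable-selection principle, and it reads off $u\in L^2(0,T;H^1_D)$ directly from the a~priori bound \eqref{eq:est-minE1} rather than through integrability of $\calE_1$ --- but these are cosmetic.
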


The proof of Proposition \ref{prop:solution_concepts} is given in
Appendix \ref{sec.marginal-solutions}. We note that the assumption on
the uniqueness of the minimizer in Item 1 is only used to ensure that
there exists a measurable selection $t\mapsto u(t)\in
\frakE(p(t),\Sigma(t))$.

\paragraph{Main results.} We now formulate our main results concerning
the existence of energetic solutions.

\begin{theorem}[Existence result for $\delta>0$]\label{thm:main-gr}
  Let Assumption \ref{ass:general} on the energies be satisfied and
  let $f$ and $p_0$ be as in \eqref {ass:f}--\eqref {ass:p0}. For
  $\delta>0$, there exists a weak solution $(p,\Sigma)$ to system
  \eqref {eq:u}--\eqref {eq:p} in the sense of Definition \ref
  {def:solution2}.
\end{theorem}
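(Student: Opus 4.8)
The plan is to construct approximate solutions by time-discretization and then pass to the limit using the stability results of Section~\ref{sec.stability}. Fix $N\in\N$, set $\tau = T/N$ and $t_k = k\tau$, and average the load over time intervals, $f_k := \tfrac1\tau\int_{t_{k-1}}^{t_k} f(s)\,ds$. Starting from $p_0^\tau := p_0$ (or a suitable regularization thereof so that $\calE(p_0^\tau;f_0)<\infty$), I would define inductively
\begin{equation*}
  p_k^\tau \in \operatorname*{arg\,min}_{p\in H_0(\Omega,\curl)\cap W^{1,r}(\Omega)}
  \Big\{ \calE\big(p;f_k\big) + \tau\,\calR\Big(\tfrac{p - p_{k-1}^\tau}{\tau}\Big)\Big\}\,.
\end{equation*}
Existence of a minimizer at each step follows from the direct method: the functional is convex (sum of convex terms by Assumption~\ref{ass:general}), coercive on $H_0(\Omega,\curl)\cap W^{1,r}(\Omega)$ thanks to the $|\curl p|^2 + \delta|\nabla p|^r$ contribution with $\delta>0$ and the quadratic lower bound on $\calR$ and on $\calQ$ (note $\calE_1$ is bounded below using Korn's inequality to control $\nabla\fhi$ by $\nabla^s\fhi$), and weakly lower semicontinuous. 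Here one uses Lemma~\ref{lem:marginal} to know $\calE_1(\cdot;f)$, hence $\calE(\cdot;f)$, is well-defined and weakly lower semicontinuous. The Euler--Lagrange inclusion at the minimizer, combined with $\Sigma_k^\tau \in \partial \calR\big((p_k^\tau - p_{k-1}^\tau)/\tau\big)$ rewritten via convex duality, yields the discrete analogue of \ref{it:F2}, and comparing the minimizer with the competitor $p_{k-1}^\tau$ gives the discrete energy inequality.

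Next I would derive the a priori estimates. Testing the discrete energy inequality and summing over $k$, using Assumption~\ref{ass:general} (quadratic growth of $\calR,\calR^*$) and the $H^1(0,T;H^{-1}_D)$-regularity of $f$ to handle the $\langle \partial_t f, u\rangle$ term via discrete integration by parts, gives uniform bounds on: $p^\tau$ in $L^\infty(0,T;H_0(\Omega,\curl))\cap L^\infty(0,T;W^{1,r}(\Omega))$, $\partial_t p^\tau$ (the piecewise-constant difference quotients) in $L^2(0,T;L^2(\Omega;\Rnn))$, $\Sigma^\tau$ in $L^2(0,T;L^2(\Omega;\Rnn))$, and the associated $u^\tau\in\frakE(p^\tau,f^\tau)$ in $L^2(0,T;H^1_D(\Omega;\R^3))$ (again via Korn and the quadratic bounds on $Q$). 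Since $\delta>0$, the bound in $L^\infty(0,T;W^{1,r}(\Omega))$ together with the bound on $\partial_t p^\tau$ in $L^2(0,T;L^2)$ gives, by Aubin--Lions, strong $L^2(0,T;L^2(\Omega;\Rnn))$ compactness of the interpolants $p^\tau$; extract a subsequence with $p^\tau \to p$ strongly in $L^2(\Omega_T)$, $\partial_t p^\tau \weak \partial_t p$ in $L^2(\Omega_T)$, $p^\tau \weakstar p$ in $L^\infty(0,T;W^{1,r})$, $\Sigma^\tau \weak \Sigma$ in $L^2(\Omega_T)$, and $u^\tau \weak u$ in $L^2(0,T;H^1_D)$.

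Finally, passing to the limit: I would verify \ref{it:R2}, \ref{it:F2}, \ref{it:E2} for $(p,\Sigma)$ using the stability machinery of Section~\ref{sec.stability}. The regularity \ref{it:R1}/\ref{it:R2} is immediate from the weak limits and the initial condition from the strong $L^2$-convergence (choosing $p_0^\tau\to p_0$). The energy inequality \ref{it:E2} passes to the limit by weak lower semicontinuity: the dissipation terms $\int_0^t\{\calR(\partial_t p) + \calR^*(\Sigma)\}$ are weakly l.s.c.\ on $L^2$ (convexity), $\calE(p(t);f(t))$ at the final time is controlled by Fatou plus l.s.c.\ of $\calE$, the initial energy converges, and the right-hand side converges using $\partial_t f^\tau \to \partial_t f$ in $L^2(0,T;H^{-1}_D)$ and weak convergence of $u^\tau$ — with the $\inf$ over $\frakE$ on the right treated via the fact that any limit of admissible $u^\tau$ lies in $\frakE(p(t),f(t))$. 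The condition \ref{it:F2} is the delicate point: one must show $\calE(p;f) + \calE^*(-\Sigma;f) = \langle -\Sigma, p\rangle$ in the limit, i.e.\ that the limiting $\Sigma$ is still the subgradient. The standard route is the Minty/liminf trick — using the discrete \ref{it:F2}, the energy inequality to get a matching limsup on $\int\langle -\Sigma^\tau, p^\tau\rangle$, weak l.s.c.\ of $\calE$ and of $\calE^*$ (which requires that $\calE^*(\cdot;f)$ is well-behaved under the convergence $f^\tau\to f$, hence the $H^1(0,T;H^{-1}_D)$ assumption on $f$), and the strong $L^2$-convergence $p^\tau\to p$ to identify $\langle -\Sigma^\tau,p^\tau\rangle \to \langle -\Sigma, p\rangle$. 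I expect this identification of the back-stress in the limit — reconciling the weak convergence of $\Sigma^\tau$ with the nonlinear relation \ref{it:F2}, and the careful handling of the $f$-dependence of the marginal energies $\calE,\calE^*$ — to be the main obstacle; the compactness of $p$ itself is comparatively routine when $\delta>0$, which is precisely why this case is separated from Theorem~\ref{thm:main-eq}.
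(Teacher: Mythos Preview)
Your proposal is correct and follows essentially the same approach as the paper: time-discretization via the incremental variational scheme, a priori bounds from the discrete energy inequality, Aubin--Lions compactness of the interpolants (available precisely because $\delta>0$ provides a $W^{1,r}$ bound), and passage to the limit using the stability results of Section~\ref{sec.stability}. The only cosmetic difference is that the paper handles \ref{it:F2} by unfolding the convex dual with an arbitrary test function $\eta$ and using the Lipschitz property of $\calE_1$ (Lemma~\ref{lem:marginal}), rather than invoking lower semicontinuity of $\calE^*$ directly; since the strong $L^2$-convergence of $p^\tau$ already gives $\langle -\Sigma^\tau,p^\tau\rangle \to \langle -\Sigma, p\rangle$, either route works and the Minty-type limsup argument you mention is not needed here.
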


In the case $\delta=0$ we must restrict ourselfes to energies with
$H_e\equiv 0$ and $H_p\equiv 0$.

\begin{theorem}[Existence result for $\delta=0$]\label{thm:main-eq}
  Let Assumption \ref{ass:general} on the energies and let the
  assumptions on $f$, $p_0$, and $\Omega$ be satisfied. We consider
  the case $\delta=0$ and the energies of \eqref {eq:W-sub-e}--\eqref
  {eq:total-en} with $H_e\equiv 0$ and $H_p\equiv 0$. There exists a
  weak solution $(p,\Sigma)$ to system \eqref {eq:u}--\eqref {eq:p} in
  the sense of Definition \ref {def:solution2}.
\end{theorem}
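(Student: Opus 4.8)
The plan is to follow the time-discretization strategy already used for Theorem \ref{thm:main-gr}, and then to repair the loss of spatial compactness (which is unavoidable when only $\curl p$, and not $\nabla p$, is controlled) by means of the div-curl lemma. First I would fix a time step $h=T/N$ and, inductively over $k=1,\dots,N$, solve the incremental minimization problem
\[
 p_k^h \in \operatorname{argmin}_{p\in H_0(\Omega,\curl)} \Big\{ \calE\big(p;f(t_k)\big) + h\,\calR\big(\tfrac{p-p_{k-1}^h}{h}\big) \Big\},
\]
with $p_0^h := p_0$. Existence of a minimizer follows from the direct method: the functional is convex (sum of the convex $\calE$ and the convex dissipation term), and coercive on $H_0(\Omega,\curl)$ because Assumption \ref{ass:general} gives $\calE_1(p;f)\ge c\|\sym(\nabla u-p)\|_{L^2}^2 - \|f\|\|u\|$ together with the term $\|\curl p\|_{L^2}^2$ in $\calW_p$; since $H_e\equiv H_p\equiv 0$ these controls plus Korn's inequality pin down both $u$ (up to the marginal) and $p$. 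From the minimality I would derive, exactly as in the $\delta>0$ case, the discrete stability relation (the Euler--Lagrange inclusion defining $\Sigma_k^h\in\partial(-\calE)(p_k^h;f(t_k))$, equivalently condition \ref{it:F2} at the discrete level) and the discrete energy inequality obtained by comparing $p_k^h$ with $p_{k-1}^h$ as a competitor and summing over $k$.

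Next I would introduce the piecewise-constant and piecewise-affine interpolants $\bar p^h, \hat p^h, \bar\Sigma^h, \bar u^h$ and extract a priori bounds uniform in $h$: from the summed energy inequality and the quadratic lower bounds on $Q$ and $R$ one gets $\hat p^h$ bounded in $H^1(0,T;L^2(\Omega;\Rnn))$, $\bar p^h$ bounded in $L^2(0,T;H_0(\Omega,\curl))$, $\bar\Sigma^h$ bounded in $L^2(0,T;L^2)$ (via $\calR^*(\Sigma)\ge c|\Sigma|^2$), and $\bar u^h$ bounded in $L^2(0,T;H^1_D)$ by the coercivity of $\calE_1$ together with Korn. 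The handling of the explicit time dependence through $f$ is as in Theorem \ref{thm:main-gr}: the term $\langle\partial_t f,\bar u^h\rangle$ is controlled using \eqref{ass:f} and a Gronwall argument. This yields weak(-$*$) limits $p,\Sigma,u$ along a subsequence, with $\partial_t p\in L^2(0,T;L^2)$ and $p(0)=p_0$, so \ref{it:R2} holds.

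The passage to the limit in the stability condition \ref{it:F2} and in the energy inequality \ref{it:E2} is the crux. The energy inequality passes to the limit by weak lower semicontinuity: $\calR$ and $\calR^*$ are convex and continuous on $L^2$ hence weakly lower semicontinuous in space-time, and $\calE(\cdot;f(t))$ is weakly lower semicontinuous because $\calE_1$ is a marginal of a convex coercive functional and $\|\curl p\|_{L^2}^2$ is convex; the initial-time term converges since $\hat p^h(t)\weakto p(t)$ for each $t$. For condition \ref{it:F2}, which is the Fenchel equality $\calE(p;f)+\calE^*(-\Sigma;f)=\langle-\Sigma,p\rangle$, equivalently $-\Sigma\in\partial\calE(p;f)$, the difficulty is that we only have weak convergence of both $\bar p^h$ and $\bar\Sigma^h$, and $\calE$ is nonquadratic, so one cannot simply pass to the limit in the subdifferential. \textbf{This is the main obstacle.} The remedy, as announced in the introduction, is compensated compactness: the key observation is that $\langle\bar\Sigma^h,\bar p^h\rangle$ involves, through the $\curl\curl$ contribution to $\Sigma$, a pairing of the form $\int \curl(\cdot)\cdot(\cdot)$ in which one factor has controlled curl and the dual factor has controlled divergence (the equilibrium equation $-\nabla\cdot\sigma=f$ built into $\calE_1$ gives $\nabla\cdot\sigma$ bounded in $L^2$). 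I would set up a Helmholtz/Hodge decomposition of $\bar p^h$ (legitimate since $\Omega$ is simply connected with connected $C^{1,1}$ boundary), $\bar p^h = \nabla\Phi^h + \psi^h$ row-wise with $\operatorname{div}\psi^h=0$; the gradient part converges strongly by elliptic regularity (its Laplacian is controlled), while on the divergence-free part one applies the div-curl lemma to identify the limit of $\bar\Sigma^h:\bar p^h$ with $\Sigma:p$ in the sense of distributions. Combining this with the already-established weak limits and the lower semicontinuity inequalities from the discrete Fenchel equality then upgrades the limiting Fenchel inequality to the required equality \eqref{eq:F2}. The remaining verifications — measurable selection of $\tilde u\in\frakE(p,f)$ for the right-hand side of \eqref{eq:E2}, and consistency of the interpolants at $t=0$ — are routine and parallel to the $\delta>0$ case.
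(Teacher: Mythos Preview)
Your overall strategy---time discretization, a priori bounds, and compensated compactness for the limit---matches the paper's. But the implementation of the div-curl step contains a genuine gap, and it also misplaces where the Helmholtz decomposition is actually needed.

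For condition \ref{it:F2} you propose to Helmholtz-decompose $\bar p^h=\nabla\Phi^h+\psi^h$ and claim the gradient part converges strongly ``by elliptic regularity (its Laplacian is controlled)''. But $\Delta\Phi^h=\div\,\bar p^h$, and you have \emph{no} bound on $\div\,\bar p^h$; only $\curl\bar p^h$ is controlled. So this step fails as written. The paper instead establishes directly that $-\nabla\cdot\Sigma_k=f_k$ (Lemma \ref{lem:H_K=0}): since in the case $H_e\equiv H_p\equiv 0$ one has, morally, $\Sigma=\sigma-2\,\curl\curl p$ and $\nabla\cdot\curl=0$, the divergence of $\Sigma$ coincides with that of $\sigma$, which is $-f$ by equilibrium. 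This yields $\nabla\cdot\bar\Sigma^N$ bounded in $L^2(0,T;L^2)$, and then the \emph{global} div-curl lemma is applied directly to the pair $(\bar\Sigma^N,\bar p^N)$---one factor with bounded divergence, the other with bounded curl and vanishing tangential trace---to pass to the limit in $\int_0^T\theta\,\la\bar\Sigma^N,\bar p^N\ra\,dt$. A time mollification together with the uniform time-shift estimate $\sup_N\|\bar p^N(\cdot+\rho)-\bar p^N\|_{L^2(0,T-\rho;L^2)}\to 0$ (a consequence of the $H^1$-in-time bound on $\hat p^N$) is required to handle the time variable; this ingredient is absent from your sketch.

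A Helmholtz decomposition \emph{is} used in the paper, but for a different purpose that you dismiss as routine: showing that the weak limit $u$ of the discrete minimizers $\bar u^N$ actually lies in $\frakE(p(t),f(t))$. Without strong convergence of $\bar p^N$, testing the minimality of $\bar u^N$ against a \emph{fixed} competitor $\fhi$ fails, since $\int_\Omega Q(\sym(\nabla\fhi-\bar p^N))$ need not converge. The paper repairs this by decomposing $q^N:=\nabla\fhi+\bar p^N-p$ as $\nabla\fhi^N+\curl\Psi^N$ and proving $\curl\Psi^N\to 0$ strongly in $L^2(0,T;L^2)$, so that the \emph{moving} competitors $\fhi^N$ satisfy $\nabla\fhi^N-\bar p^N\to\nabla\fhi-p$ strongly. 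This step is not a mere measurable-selection issue; it relies on the boundary condition $\bar p^N\in H_0(\Omega,\curl)$, the time-shift compactness, and an integration by parts using the mixed boundary conditions on $\Psi^N$.
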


\section{Stability results}
\label{sec.stability}

This section is devoted to stability results for the plasticity system
in the weak form: We show that, for convergent sequences of
approximate solutions, the limit functions are solutions to the
plasticity system \eqref{eq:u}--\eqref{eq:p}. 

We work with two families of functions, one denoted with a hat and the
other denoted with an overbar. Later on, the first will be a sequence
that is obtained by a piecewise affine interpolation of a
time-discrete approximate solution, the latter will be the
corresponding piecewise constant interpolant.

\subsection{Stability for $\delta>0$}

The boundedness and the convergence properties of the sequences are
summarized in the following assumption.

\begin{assumption}[Convergence properties,
  $\delta>0$]\label{ass:convergence}
  We consider a sequence of approximate solutions $(\bar p^N,
  \bar\Sigma^N)_N$ together with a sequence of loads $(\bar f^N)_N$
  that satisfy
  \begin{align}
    \bar p^N \,&\weakto\, p \quad\text{ in }
    L^r(0,T;W^{1,r}(\Omega,\Rnn)\cap H_0(\Omega,\curl))\,,
    \label{eq:pN}\\
    \bar p^N \,&\to\, p \quad\text{ in }L^2(0,T;L^2(\Omega,\Rnn))\,, 
    \label{eq:strong-pN-bar}\\
    \bar \Sigma^N \,&\weakto\, \Sigma \quad\text{ in }L^2(0,T;L^2(\Omega,\Rnn))\,, 
    \label{eq:SigmaN}\\
    \bar f^N\,&\to\, f \quad\text{ in }L^2(0,T;L^2(\Omega;\R^3))\,,
    \label{eq:fN-bar}
  \end{align}
  as $N\to \infty$. We furthermore assume that there are
  approximations $(\hat p^N)_N$ and $(\hat f^N)_N$ that satisfy, as
  $N\to \infty$,
  \begin{align}
    \hat p^N \,&\weakto\, p \quad\text{ in }H^1(0,T;L^2(\Omega,\Rnn))\,, 
    \label{eq:dtpN-hat}\\
    \hat p^N \,&\to\, p \quad\text{ in }L^2(0,T;L^2(\Omega,\Rnn))\,, 
    \label{eq:strong-pN-hat}\\
    \hat f^N\,&\to\, f \quad\text{ in }H^1(0,T;H^{-1}_D(\Omega;\R^3)) \,.
    \label{eq:fN-hat}
  \end{align}
  Moreover, we assume that the two sequences $(\hat p^N)_N$ and $(\bar
  p^N)_N$ are bounded in the space $L^\infty(0,T;
  H_0(\Omega,\curl)\cap W^{1,r}(\Omega;\Rnn))$ and that $(\bar f^N)_N$
  is bounded in $L^\infty(0,T; H_D^{-1}(\Omega))$. Regarding initial
  values we assume that $\hat p^N(0)=\bar p^N(0)=p(0)=p_0$ and $\hat
  f^N(0) = \bar f^N(0) = f(0)$ holds for all $N\in\N$.  Finally, to
  avoid issues regarding measurability, we assume that $\bar p^N$,
  $\bar f^N$, and $\del_t \hat f^N$ are simple functions for all
  $N\in\N$.
\end{assumption}

The next assumption expresses that the functions are approximate
solutions.

\begin{assumption}[Approximate solution
  properties]\label{ass:approx-sol}
  Let $(\bar p^N, \bar\Sigma^N)_N$ together with $(\hat p^N)_N$ and
  $(\bar f^N)_N$, $(\hat f^N)_N$ be sequences as in Assumption \ref
  {ass:convergence}.  We assume that these functions are approximate
  solutions in the following sense:
  \begin{enumerate}
  \item Relation \eqref{eq:F2} of Item \ref{it:F2} is approximately
    satisfied: For almost every $t\in (0,T)$ holds
    \begin{equation}\label{eq:approx-F2}
      \calE\big(\bar p^N(t);\bar f^N(t)\big) 
      + \calE^*\big(-\bar \Sigma^N(t);\bar f^N(t)\big) \le  
      \la -\bar \Sigma^N(t), \bar p^N(t) \ra + g_N(t)\,,
    \end{equation}
    where the error functions $g_N$ satisfy $\int_0^T |g_N(t)|\, dt\to
    0$ as $N\to \infty$.
  \item The energy inequality \eqref{eq:E2} of Item \ref{it:E2} is
    approximately satisfied: For almost every $t\in (0,T)$, as $N\to \infty$,
    there holds
    \begin{align}
      &\calE\big(\bar p^N(s); \bar f^N(s)\big) \Big|_{s=0}^t
      + \int_0^t \left\{ \calR(\del_t \hat p^N) +
        \calR^*(\bar \Sigma^N) \right\}\, ds \notag\\
      &\qquad \le -\int_0^t \inf_{\tilde u^N\in
        \frakE(\bar p^N(s),\bar f^N(s))}\big\la \partial_t \hat f^N(s), \tilde
      u^N\big\ra\, ds + o(1)\,. \label{eq:approx-E2}
    \end{align}
  \end{enumerate}
\end{assumption}

\begin{proposition}\label{prop:delta_>0}
  Let Assumptions \ref{ass:convergence} and \ref{ass:approx-sol} be
  satisfied. Then the pair $(p, \Sigma)$ is a weak solution of the
  original problem in the sense of Definition \ref{def:solution2}.
\end{proposition}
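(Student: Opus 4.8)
The plan is to verify the three conditions (R$_2$), (F$_2$), (E$_2$) of Definition~\ref{def:solution2} for the limit pair $(p,\Sigma)$. Condition (R$_2$) is immediate: the function spaces demanded in Item~\ref{it:R1} are exactly those in which the convergences \eqref{eq:pN}, \eqref{eq:SigmaN}, \eqref{eq:dtpN-hat} take place, together with the assumed boundedness in $L^\infty(0,T;H_0(\Omega,\curl)\cap W^{1,r}(\Omega;\Rnn))$, and the initial condition $p(0)=p_0$ follows from $\hat p^N(0)=p_0$ and the weak continuity of $H^1(0,T;L^2)\hookrightarrow C([0,T];L^2)$, which forces $\hat p^N(0)\weakto p(0)$. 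The remaining work is to pass to the limit in \eqref{eq:approx-F2} and \eqref{eq:approx-E2}, for which two ingredients are used repeatedly: (i) convergence of the duality pairing, $\int_0^T\psi\,\la-\bar\Sigma^N,\bar p^N\ra\,dt\to\int_0^T\psi\,\la-\Sigma,p\ra\,dt$ for every $\psi\in L^\infty(0,T)$, which holds since $\bar p^N\to p$ strongly and $\bar\Sigma^N\weakto\Sigma$ weakly in $L^2(0,T;L^2(\Omega;\Rnn))$; and (ii) lower semicontinuity, tested against nonnegative $\psi\in L^\infty(0,T)$, of the time-integrated functionals built from $\calR(\del_t\cdot)$, $\calR^*(\cdot)$, $\calW_p(\cdot)$, the marginal energy $\calE(\cdot;f)$, and its conjugate $\calE^*(\cdot;f)$.

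For $\calR$, $\calR^*$ and the $|\curl\,\cdot|^2$ and $\delta|\nabla\cdot|^r$ parts of $\calW_p$ this lower semicontinuity is the standard convexity argument (using the quadratic, resp.\ $r$-growth bounds of Assumption~\ref{ass:general} and the weak convergences of $\del_t\hat p^N$, $\bar\Sigma^N$, $\curl\bar p^N$, $\nabla\bar p^N$), while for the $H_p$-term one passes to an a.e.\ convergent subsequence via \eqref{eq:strong-pN-bar} and uses Fatou. For the marginal energy, the quadratic growth of $Q$ and Korn's inequality make any sequence of almost-minimizers $\fhi^N$ in \eqref{eq:reduced-energy-1} for data $(\bar p^N(t),\bar f^N(t))$ bounded in $H^1_D$; extracting a weak limit and using lower semicontinuity of $\calQ$ and of $\int_\Omega H_e$ together with $\bar f^N\to f$ gives $\liminf_N\calE_1(\bar p^N;\bar f^N)\ge\calE_1(p;f)$ pointwise in $t$, hence the weighted integrated version. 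For $\calE^*(\cdot;f)$ one uses the representation $\calE^*(q;f)=\sup_\rho\{\la q,\rho\ra-\calE(\rho;f)\}$: for any fixed competitor $P\in L^2(0,T;L^2)$ one has $\int_0^T\psi\,\calE^*(-\bar\Sigma^N;\bar f^N)\ge\int_0^T\psi\,\{\la-\bar\Sigma^N,P\ra-\calE(P;\bar f^N)\}$, and passing to the limit (the pairing term by (i), the energy term because $f\mapsto\calE_1(P(t);f)$ is Lipschitz on bounded sets, so $\calE_1(P;\bar f^N)\to\calE_1(P;f)$ in $L^1(0,T)$ — here the assumed $L^\infty(0,T;H^{-1}_D)$ bound on $\bar f^N$ enters) and then taking the supremum over $P$ gives the claim.

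Condition (F$_2$) then follows by multiplying \eqref{eq:approx-F2} by an arbitrary $\psi\in L^\infty(0,T)$ with $\psi\ge0$, integrating over $(0,T)$, and letting $N\to\infty$: the left-hand side is bounded below by $\int_0^T\psi\,\{\calE(p;f)+\calE^*(-\Sigma;f)\}$ by the lower semicontinuity statements and superadditivity of $\liminf$, while the right-hand side converges to $\int_0^T\psi\,\la-\Sigma,p\ra$ because $\int_0^T|g_N|\to0$; since $\psi\ge0$ is arbitrary, this yields $\calE(p(t);f(t))+\calE^*(-\Sigma(t);f(t))\le\la-\Sigma(t),p(t)\ra$ for a.e.\ $t$, and the reverse inequality is the Fenchel--Young inequality, so \eqref{eq:F2} holds.

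For condition (E$_2$) we pass to the limit in \eqref{eq:approx-E2} at a.e.\ fixed $t$, along a subsequence for which $\bar p^N(t)\to p(t)$ and $\bar f^N(t)\to f(t)$ in $L^2(\Omega)$ and $\del_t\hat f^N(t)\to\del_t f(t)$ in $H^{-1}_D$. The initial energy term equals $\calE(p_0;f(0))$ for every $N$ by the matched initial data; the final energy term satisfies $\calE(p(t);f(t))\le\liminf_N\calE(\bar p^N(t);\bar f^N(t))$, since the uniform $L^\infty(0,T;H_0(\Omega,\curl)\cap W^{1,r})$ bound forces $\bar p^N(t)\weakto p(t)$ in $H_0(\Omega,\curl)\cap W^{1,r}$ for a.e.\ $t$ (boundedness plus identification of the limit through the $L^2$ convergence), so the pointwise lower semicontinuity of $\calE$ applies; the dissipation terms satisfy $\int_0^t\calR(\del_t p)\le\liminf_N\int_0^t\calR(\del_t\hat p^N)$ and $\int_0^t\calR^*(\Sigma)\le\liminf_N\int_0^t\calR^*(\bar\Sigma^N)$ by convexity and \eqref{eq:dtpN-hat}, \eqref{eq:SigmaN}. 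For the right-hand side one shows $\liminf_N\int_0^t\inf_{\tilde u^N\in\frakE(\bar p^N(s),\bar f^N(s))}\la\del_t\hat f^N(s),\tilde u^N\ra\,ds\ge\int_0^t\inf_{\tilde u\in\frakE(p(s),f(s))}\la\del_t f(s),\tilde u\ra\,ds$, which rests on a stability property of the marginal minimizers: the functionals $\fhi\mapsto\calW_e(\nabla\fhi,\bar p^N(s))-\la\bar f^N(s),\fhi\ra$ Mosco-converge to $\fhi\mapsto\calW_e(\nabla\fhi,p(s))-\la f(s),\fhi\ra$, so weak limits of almost-minimizers lie in $\frakE(p(s),f(s))$; combined with $\del_t\hat f^N(s)\to\del_t f(s)$ this gives the pointwise inequality for the infima, and a Fatou argument (valid because the integrands are bounded below by $-\|\del_t\hat f^N(s)\|_{H^{-1}_D}\sup_{\tilde u\in\frakE}\|\tilde u\|_{H^1_D}$, a sequence bounded in $L^2(0,T)$) upgrades it to the integrated form. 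Assembling these bounds in \eqref{eq:approx-E2} and sending $o(1)\to0$ gives \eqref{eq:E2}. I expect the main obstacle to be this last step — the power-of-loads term together with the continuity and stability of the marginal problem \eqref{eq:reduced-energy-1} and of its solution set $\frakE$ under joint perturbation of $(p,f)$, and the associated measurable selection of near-minimizers $t\mapsto\fhi^N(t)$, $t\mapsto\tilde u^N(t)$; it is precisely to make the latter routine that Assumption~\ref{ass:convergence} requires $\bar p^N$, $\bar f^N$, $\del_t\hat f^N$ to be simple functions.
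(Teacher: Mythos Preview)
Your proof is correct and tracks the paper's strategy closely. The paper also verifies (R$_2$), (F$_2$), (E$_2$) separately; for (F$_2$) it likewise unpacks $\calE^*$ with a fixed test element and uses the Lipschitz property of $\calE_1$ from Lemma~\ref{lem:marginal}, though it keeps the competitor $\eta$ time-independent and then localizes via a countable dense subset of $\graph\calW_p|_{\dom\calW_p}$ to make the exceptional time-set independent of $\eta$ (your version with a $t$-dependent competitor $P$ implicitly swaps a supremum with the time integral). The one genuine difference is the power term on the right of (E$_2$): the paper isolates a separate lemma (Lemma~\ref{lem:3.5}) constructing measurable selections $\bar u^N(s)\in\frakE(\bar p^N(s),\bar f^N(s))$ that attain the infimum in \eqref{eq:approx-E2} and converge weakly in $L^2(0,T;H^1_D(\Omega;\R^3))$ to some $u$ with $u(s)\in\frakE(p(s),f(s))$ a.e.; the power term then passes to the limit directly as a strong--weak duality pairing, $\int_0^t\la\partial_t\hat f^N,\bar u^N\ra\to\int_0^t\la\partial_t f,u\ra$, after which one bounds by the infimum. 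This is cleaner than your pointwise-Mosco-plus-Fatou route, whose Fatou step as written is not quite complete: your minorant $-C\|\partial_t\hat f^N(s)\|_{H^{-1}_D}$ is $N$-dependent, so classical Fatou does not apply --- you need the generalized form with $L^1$-convergent minorants (valid here since $\|\partial_t\hat f^N\|_{H^{-1}_D}\to\|\partial_t f\|_{H^{-1}_D}$ in $L^1(0,T)$ by \eqref{eq:fN-hat}), and you should say so.
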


\begin{proof}
  As weak limits, the functions $p$ and $\Sigma$ are in the
  appropriate function spaces of Item \ref{it:R2}: $\Sigma\in
  L^2(0,T;L^2(\Omega; \Rnn))$, $p\in H^{1}(0,T;L^2(\Omega; \Rnn)) \cap
  L^2(0,T;H_0(\Omega,\curl))$, $p\in L^\infty(0,T;W^{1,r}(\Omega))$.
  The properties of Items \ref{it:F2} and \ref{it:E2} are proved in
  the subsequent lemmas. Together, \ref{it:R2}, \ref{it:F2},
  \ref{it:E2} imply that $(p, \Sigma)$ is a weak solution.
\end{proof}

\begin{lemma}[Item \ref{it:F2}]\label{lem:item-F2_>0}
  Let Assumptions \ref{ass:convergence} and \ref{ass:approx-sol} be
  satisfied. Then Item \ref{it:F2} holds for the limit functions
  $(p,\Sigma)$.
\end{lemma}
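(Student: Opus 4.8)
The plan is to integrate the approximate relation~\eqref{eq:approx-F2} over $(0,T)$, pass to the limit $N\to\infty$, and then recover the pointwise identity~\eqref{eq:F2} by combining the resulting integrated inequality with the pointwise Fenchel--Young inequality $\calE(p(t);f(t))+\calE^*(-\Sigma(t);f(t))\ge\la-\Sigma(t),p(t)\ra$, which holds for a.e.\ $t$. Thus the proof reduces to three ingredients: the lower-semicontinuity estimates $\liminf_N\int_0^T\calE(\bar p^N;\bar f^N)\,dt\ge\int_0^T\calE(p;f)\,dt$ and $\liminf_N\int_0^T\calE^*(-\bar\Sigma^N;\bar f^N)\,dt\ge\int_0^T\calE^*(-\Sigma;f)\,dt$, and the convergence $\int_0^T\la-\bar\Sigma^N,\bar p^N\ra\,dt\to\int_0^T\la-\Sigma,p\ra\,dt$ of the right-hand side.

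For the primal term I would split $\calE=\calE_1+\calW_p$. Weak lower semicontinuity of $p\mapsto\int_0^T\calW_p(p)\,dt$ along~\eqref{eq:pN} follows from the convexity and lower semicontinuity of $H_p$ and of $q\mapsto|q|^r$ (for the $|\curl p|^2$-term one additionally uses that $(\bar p^N)$ is bounded in $L^\infty(0,T;H_0(\Omega,\curl))$, so that $\curl\bar p^N$ is bounded in $L^\infty(0,T;L^2(\Omega))$ with weak limit $\curl p$); alternatively the $H_p$-term can be handled by Fatou using the strong convergence~\eqref{eq:strong-pN-bar}. For the marginal part $\calE_1$ I would invoke Lemma~\ref{lem:marginal}: the quadratic growth of $Q$ from Assumption~\ref{ass:general} together with Korn's inequality makes the minimisation in~\eqref{eq:reduced-energy-1} coercive, hence $\calE_1(\,\cdot\,;\,\cdot\,)$ is sequentially continuous with respect to strong $L^2$-convergence of the plastic strain and of the load, so that~\eqref{eq:strong-pN-bar} and~\eqref{eq:fN-bar} give $\int_0^T\calE_1(\bar p^N;\bar f^N)\,dt\to\int_0^T\calE_1(p;f)\,dt$. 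For the dual term the key observation is that, by~\eqref{eq:reduced-energy-1}--\eqref{eq:reduced-energy-total},
\[
  \calE^*(\eta;f)=\sup_{\fhi,\,q}\Big\{\la\eta,q\ra+\la f,\fhi\ra-\calW_e(\nabla\fhi,q)-\calW_p(q)\Big\}
\]
is a supremum of functions that are \emph{affine} in $(\eta,f)$; consequently $\calE^*$ is jointly convex and lower semicontinuous in $(\eta,f)$, and the time-integrated functional $(\eta,g)\mapsto\int_0^T\calE^*(\eta(t);g(t))\,dt$ is convex and lower semicontinuous on $L^2(0,T;L^2(\Omega;\Rnn))\times L^2(0,T;L^2(\Omega;\R^3))$ --- the uniform integrable lower bound required for the Fatou step being supplied by the a priori estimates accompanying Assumption~\ref{ass:convergence}. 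A convex lower-semicontinuous functional on a Banach space is weakly lower semicontinuous, and since $-\bar\Sigma^N\weakto-\Sigma$ by~\eqref{eq:SigmaN} while $\bar f^N\to f$ by~\eqref{eq:fN-bar}, the estimate for the dual term follows.

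It remains to collect the pieces. On the right-hand side of~\eqref{eq:approx-F2} one has $\int_0^T\la-\bar\Sigma^N,\bar p^N\ra\,dt\to\int_0^T\la-\Sigma,p\ra\,dt$ by the weak--strong pairing~\eqref{eq:SigmaN}--\eqref{eq:strong-pN-bar}, and $\int_0^T|g_N|\,dt\to0$ by Assumption~\ref{ass:approx-sol}. Passing to the limit in the integrated form of~\eqref{eq:approx-F2} yields $\int_0^T\big(\calE(p;f)+\calE^*(-\Sigma;f)\big)\,dt\le\int_0^T\la-\Sigma,p\ra\,dt$; since pointwise Fenchel--Young gives the reverse inequality with a nonnegative integrand, that integrand vanishes for a.e.\ $t$, which is precisely~\eqref{eq:F2}, and finiteness of all terms for a.e.\ $t$ follows from the same a priori bounds. \textbf{The main obstacle} I anticipate is exactly the semicontinuity of the marginal energy and of its conjugate under a \emph{simultaneously varying} load $\bar f^N$: one must exploit the joint-convexity structure of $\calE^*$ recorded above and, for $\calE_1$, the continuity statement of Lemma~\ref{lem:marginal} resting on the quadratic coercivity of $Q$ and Korn's inequality; verifying the uniform integrability needed for the Fatou / weak-lower-semicontinuity arguments is the accompanying technical point.
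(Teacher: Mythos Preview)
Your approach is correct but genuinely different from the paper's. The paper does \emph{not} handle $\calE^*$ directly: instead it expands the conjugate by inserting an arbitrary fixed comparison element $\eta$, obtaining the inequality
\[
  0 \ge \int_0^T \theta(t)\Big[\calE(\bar p^N;\bar f^N) + \la -\bar\Sigma^N,\eta - \bar p^N\ra - \calE(\eta;\bar f^N)\Big]\,dt,
\]
passes to the limit in this simpler expression (only strong convergence of $\bar p^N,\bar f^N$, weak convergence of $\bar\Sigma^N$, and the Lipschitz property of $\calE_1$ are needed), then localises in $t$ via a countable dense family $\{\eta_i\}\subset\dom(\calW_p)$, and only afterwards takes the supremum over $\eta$ to recover $\calE^*(-\Sigma(t);f(t))$. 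Your route instead keeps $\calE^*$ intact, exploiting the observation that $\calE^*(\eta;f)=\sup_{q,\fhi}\{\la\eta,q\ra+\la f,\fhi\ra-\calW_e(\nabla\fhi,q)-\calW_p(q)\}$ is a supremum of affine functions and hence jointly convex and lower semicontinuous in $(\eta,f)$; the pointwise Fenchel--Young inequality then does the localisation for free. Your argument is slightly more conceptual and avoids the density/countability step, while the paper's is more elementary in that it never needs the joint-convexity structure of the conjugate or the accompanying uniform-integrability check for Fatou (which you correctly flag; it is supplied by the affine lower bound $\calE^*(\eta;f)\ge\la\eta,q_0\ra - C$ for any fixed $q_0\in\dom(\calW_p)$).
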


\begin{proof} We use an arbitrary non-negative function $\theta\in
  C^\infty(0,T; \R)$ and consider inequality \eqref{eq:approx-F2} in
  the integrated form
  \begin{align*}
    0\,&\geq\, \liminf_{N\to\infty} \int_0^T \theta(t)
    \Big[ \calE\big(\bar p^N(t); \bar f^N(t)\big) +
    \calE^*\big(-\bar \Sigma^N(t); \bar f^N(t)\big) 
    + \la \bar \Sigma^N(t), \bar p^N(t)\ra\Big]\,dt\,.
  \end{align*}
  By the definition of the convex dual $\calE^*$, for an arbitrary
  $\eta\in W^{1,r}(\Omega;\Rnn)\cap H_0(\Omega,\curl)$, we have
  \begin{align*}
    0\,&\geq\, \liminf_{N\to\infty} \int_0^T \theta(t)
    \Big[\calE\big(\bar p^N(t);\bar f^N(t)\big) 
    + \la -\bar \Sigma^N(t),\eta-\bar p^N(t)\ra 
    -\calE\big(\eta;\bar f^N(t)\big)\Big]\,dt\,.
  \end{align*}
  We can exploit the strong convergence of $\bar p^N$ and $\bar f^N$
  and the Lipschitz property of Item \ref{it:lem2.5-4} of Lemma
  \ref{lem:marginal} to conclude the convergence of the last two
  terms. In the first term we use once more the Lipschitz property of
  $\calE_1$ (we exploit the bounds for $\bar f^N \in L^\infty(0,T;
  H_D^{-1}(\Omega))$ and $\bar p^N\in L^\infty(0,T;L^2(\Omega))$ to
  control the Lipschitz constant) and the lower semicontinuity of
  $\calW_p$ and obtain
  \begin{align}
    0\, &\geq\, \int_0^T \theta(t)\Big[\calE\big(p(t);f(t)\big) + \la
    -\Sigma(t),\eta-p(t)\ra -\calE\big(\eta;f(t)\big)\Big]\,dt\,. \label{eq:F2-eta-pre}
  \end{align}

  In order to localize in $t$, we proceed as follows. We consider a
  countable dense subset
  $\big\{\big(\eta_i,\calW_p(\eta_i)\big):\eta_i\in \dom(\calW_p),
  i\in\N\big\}$ of $\graph \calW_p|_{\dom(\calW_p)}$, which is
  separable as a subset of the separable space
  $L^2(\Omega;\Rnn)\times\R$.
  
  Since $\theta$ was arbitrary, we deduce from \eqref{eq:F2-eta-pre}
  that there exists an exceptional set of time instances $B\subset
  (0,T)$ of measure zero such that, for all $t\in (0,T)\setminus B$
  and all $i\in\N$,
  \begin{align}
    0 \,\geq &\, \calE\big(p(t);f(t)\big) + \la
    -\Sigma(t),\eta_i-p(t)\ra -\calE\big(\eta_i;f(t)\big)\,. 
    \label{eq:F2-eta-i}
  \end{align}
  
  For an arbitrary $\eta\in \dom(\calW_p)$ there exists a subsequence
  $i\to\infty$ (not relabeled) such that $\eta_i\to \eta$ in
  $L^2(\Omega;\Rnn)$ and $\calW_p(\eta_i)\to\calW_p(\eta)$. Using the
  first property, we can pass to the limit in the second term on the
  right hand side of \eqref{eq:F2-eta-i} and also, by the $2$-growth
  of $Q$, in $\calE_1(\eta_i;f(t))$. Using the second property yields,
  for all $t\in (0,T)\setminus B$,
  \begin{align*}
    0\,\geq &\, \calE\big(p(t);f(t)\big) + \la -\Sigma(t),\eta -p(t)
    \ra -\calE\big(\eta;f(t)\big)\,,
  \end{align*}
  first for all $\eta\in \dom(\calW_p)$ and then, since
  $\calE(\cdot,f(t))$ is infinite outside $\dom(\calW_p)$, for all
  $\eta\in L^2(\Omega;\Rnn)$.  Taking the supremum over all $\eta\in
  L^2(\Omega;\Rnn)$ yields
  \begin{align*}
    \la -\Sigma(t),p(t) \ra\,\geq &\, \calE\big(p(t);f(t)\big) +
    \calE^*\big( -\Sigma(t);f(t)\big)
  \end{align*}
  for all $t\in (0,T)\setminus B$, and hence the claim.
\end{proof}

\begin{lemma}[Reconstruction of displacement fields]\label{lem:3.5}
  Let Assumption \ref{ass:convergence} be satisfied.  There exists a
  sequence $(\bar u^N)_N$ in $L^2(0,T;H^1_D(\Omega;\R^3))$ such that
  \begin{align}
    \bar u^N(t)\,&\in\, \frakE(\bar p^N(t);\bar f^N(t))
    \quad\text{and}\notag\\
    \la \partial_t \hat f^N(t), \bar u^N(t)\ra \,&=\,
    \inf_{\tilde u\in \frakE(\bar p^N(t), \bar f^N(t))}
    \big\la \partial_t \hat f^N(t), \tilde u\big\ra
    \label{eq:lem-min-uN}
  \end{align}
  for every $N\in\N$ and almost every $t\in (0,T)$.  Furthermore,
  there exists a function $u\in L^2(0,T;H^1_D(\Omega;\R^3))$ and a
  subsequence $N\to \infty$ such that
  \begin{align}
    \bar u^N\,&\weakto\, u\quad\text{ in } L^2(0,T;H^1_D(\Omega;\R^3))
    \quad\text{ as }N\to\infty, \label{eq:lem-conv-uN}\\
    u(t)\,&\in\, \frakE(p(t);f(t)) \quad\text{ for almost every }t\in (0,T). \label{eq:lem-min-u}
  \end{align} 
\end{lemma}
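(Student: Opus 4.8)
The plan is to construct $\bar u^N$ by a measurable selection argument, then extract a weak limit using an a priori bound, and finally identify the limit as a minimizer using the Lipschitz continuity of $\calE_1$ (Lemma \ref{lem:marginal}) together with lower semicontinuity. First I would fix $N$ and work at (almost every) fixed time $t$. By Lemma \ref{lem:marginal} the set $\frakE(\bar p^N(t);\bar f^N(t))$ of minimizers of \eqref{eq:reduced-energy-1} is a non-empty, closed, convex (by convexity of $Q$ and $H_e$) subset of $H^1_D(\Omega;\R^3)$, and it is bounded uniformly in $t$ because of the coercivity coming from \eqref{eq:cond-QR-1}, Korn's inequality, and the uniform $L^\infty$-in-time bounds on $\bar p^N$ and $\bar f^N$ assumed in Assumption \ref{ass:convergence}. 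On this bounded closed convex set the continuous linear functional $v\mapsto \la \partial_t \hat f^N(t),v\ra$ attains its infimum, so the sub-level-minimizing set is again non-empty, closed, convex, and bounded; picking (e.g.) its element of minimal $H^1_D$-norm gives a canonical choice $\bar u^N(t)$. Because $\bar p^N$, $\bar f^N$, and $\del_t\hat f^N$ are simple functions (Assumption \ref{ass:convergence}), this can be carried out on each of the finitely many pieces, so $\bar u^N:(0,T)\to H^1_D(\Omega;\R^3)$ is itself a simple (hence strongly measurable) function, and the uniform bound gives $\bar u^N\in L^2(0,T;H^1_D(\Omega;\R^3))$ with a norm bound independent of $N$. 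This establishes \eqref{eq:lem-min-uN}.

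Next, the uniform $L^2(0,T;H^1_D)$-bound yields, along a subsequence, $\bar u^N\weakto u$ in $L^2(0,T;H^1_D(\Omega;\R^3))$, which is \eqref{eq:lem-conv-uN}. It remains to prove \eqref{eq:lem-min-u}, i.e.\ that for a.e.\ $t$ the limit $u(t)$ lies in $\frakE(p(t);f(t))$. The idea is to pass to the limit in the time-integrated minimality inequality. For any non-negative $\theta\in C^\infty(0,T)$ and any test field $\fhi\in H^1_D(\Omega;\R^3)$, minimality of $\bar u^N(t)$ gives
\begin{align*}
  \int_0^T \theta(t)\Big[\calW_e(\nabla\bar u^N(t),\bar p^N(t)) - \la \bar f^N(t),\bar u^N(t)\ra\Big]\,dt
  \le \int_0^T \theta(t)\Big[\calW_e(\nabla\fhi,\bar p^N(t)) - \la \bar f^N(t),\fhi\ra\Big]\,dt.
\end{align*}
On the right-hand side, $\bar p^N\to p$ strongly in $L^2$ and $\bar f^N\to f$ strongly, while $\fhi$ is fixed, so using the Lipschitz/continuity properties of $\calW_e$ as a function of $p$ and the strong convergence of $\bar f^N$ one passes to the limit directly (the right-hand side converges to $\int_0^T\theta(\calW_e(\nabla\fhi,p)-\la f,\fhi\ra)$). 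On the left-hand side, the term $\la \bar f^N,\bar u^N\ra$ passes to $\la f,u\ra$ because $\bar f^N\to f$ strongly while $\bar u^N\weakto u$; and for $\int_0^T\theta\,\calW_e(\nabla\bar u^N,\bar p^N)$ I would use weak lower semicontinuity of the convex integrand $W_e(\cdot,\cdot)$ with respect to the joint convergence $\nabla\bar u^N\weakto\nabla u$ in $L^2$, $\bar p^N\to p$ in $L^2$ (convexity of $Q$ and $H_e$, together with $\nabla^s$ being a bounded linear operator, gives convexity of $F\mapsto W_e(F,p)$ and hence sequential weak lower semicontinuity of the associated integral functional, and the strong convergence of $\bar p^N$ lets one absorb the $p$-dependence). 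This yields, for all non-negative $\theta$ and all fixed $\fhi$,
\begin{align*}
  \int_0^T \theta(t)\Big[\calW_e(\nabla u(t),p(t)) - \la f(t),u(t)\ra\Big]\,dt
  \le \int_0^T \theta(t)\Big[\calW_e(\nabla\fhi,p(t)) - \la f(t),\fhi\ra\Big]\,dt.
\end{align*}
Since $\theta\ge 0$ is arbitrary, for each fixed $\fhi$ there is a null set outside of which the integrands satisfy the pointwise inequality; taking a countable dense set of test fields $\fhi$ in $H^1_D(\Omega;\R^3)$ and using continuity of both sides in $\fhi$ (again the growth bound on $Q$ and continuity of $H_e$ on its domain), we obtain a single null set $B$ such that for all $t\notin B$ the function $u(t)$ minimizes $\fhi\mapsto \calW_e(\nabla\fhi,p(t))-\la f(t),\fhi\ra$, i.e.\ $u(t)\in\frakE(p(t);f(t))$, which is \eqref{eq:lem-min-u}.

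The main obstacle I expect is the lower semicontinuity step on the left-hand side: one only has $\nabla\bar u^N\weakto\nabla u$ weakly in $L^2$ while simultaneously the integrand depends on $\bar p^N$, so one cannot invoke a plain convexity/lsc theorem for a fixed integrand. The clean way around this is the strong $L^2$-convergence $\bar p^N\to p$ (guaranteed by \eqref{eq:strong-pN-bar}): write $W_e(\nabla\bar u^N,\bar p^N)=W_e(\nabla\bar u^N,p)+[W_e(\nabla\bar u^N,\bar p^N)-W_e(\nabla\bar u^N,p)]$; the first term is handled by standard weak lower semicontinuity for the fixed convex integrand $F\mapsto Q(\sym(F-p))+H_e(\sym F)$, and the bracketed remainder tends to zero in $L^1(\Omega_T)$ because the quadratic growth of $Q$ plus the uniform $L^2$-bound on $\nabla\bar u^N$ makes $F\mapsto W_e(F,\cdot)$ locally Lipschitz in $p$ with $p$-difference controlled by $\|\bar p^N-p\|_{L^2}$ times a bounded factor. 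A secondary technical point is measurability of the selection $t\mapsto\bar u^N(t)$, but this is circumvented entirely by the hypothesis that $\bar p^N$, $\bar f^N$, $\del_t\hat f^N$ are simple functions, so only finitely many static minimization problems need to be solved.
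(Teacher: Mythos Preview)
Your overall strategy coincides with the paper's proof: construct $\bar u^N$ as a simple function using the simple-function hypothesis, extract a weak limit by the uniform $H^1_D$-bound, and pass to the limit in the time-integrated minimality inequality using convexity/lsc on the left and strong convergence of $\bar p^N$ on the right, then localize in $t$ via a countable test-function family. The lower-semicontinuity argument on the left-hand side (your splitting $W_e(\nabla\bar u^N,\bar p^N)=W_e(\nabla\bar u^N,p)+\text{remainder}$) is a valid alternative to the paper's direct joint-convexity argument.

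There is, however, a genuine gap in your localization step. You write ``taking a countable dense set of test fields $\fhi$ in $H^1_D(\Omega;\R^3)$ and using continuity of both sides in $\fhi$ (\ldots\ continuity of $H_e$ on its domain)''. But $H_e$ is only assumed convex, proper and lower-semicontinuous, not continuous; and even pointwise continuity of $H_e$ on its domain would not give continuity of the integral functional $\calH_e(\fhi)=\int_\Omega H_e(\nabla^s\fhi)$ under $H^1$-convergence, since no growth bound on $H_e$ is imposed. Concretely, $\dom(\calH_e)$ may have empty interior in $H^1_D$, so a dense set in $H^1_D$ need not intersect $\dom(\calH_e)$ at all; and for $\fhi\in\dom(\calH_e)$ approximated by $\fhi_i\to\fhi$ in $H^1_D$, lower semicontinuity only gives $\liminf_i\calH_e(\fhi_i)\ge\calH_e(\fhi)$, which is the wrong direction for the right-hand side. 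The paper fixes this by choosing the countable dense set not in $H^1_D$ but in the \emph{graph} $\{(\fhi,\calH_e(\fhi)):\fhi\in\dom(\calH_e)\}\subset H^1_D(\Omega;\R^3)\times\R$, which is separable; then $\fhi_i\to\fhi$ in $H^1_D$ \emph{and} $\calH_e(\fhi_i)\to\calH_e(\fhi)$ hold by construction, and the $Q$-part of the right-hand side converges by the $2$-growth of $Q$ alone. With this modification your argument is complete and matches the paper's.
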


\begin{proof}
  By Lemma \ref{lem:marginal}, Item \ref{it:lem2.5-3}, and the
  continuity of $\la \partial_t \hat f^N(t), \cdot\ra$ under weak
  convergence in $H^1_D(\Omega;\R^3)$, for any $N\in\N$ and almost
  every $t\in [0,T]$, there exists a function $\bar u^N(t)\in
  \frakE(\bar p^N(t); \bar f^N(t))$ that satisfies the minimizing
  property \eqref{eq:lem-min-uN}. Note that we can choose simple
  functions $t\mapsto \bar u^N(t)$; in particular, the functions are
  measurable.  Lemma \ref{lem:marginal}, Item \ref{it:lem2.5-3}, and
  Assumption \ref{ass:convergence} imply that $(\bar u^N)_N$ is
  uniformly bounded in $L^2(0,T; H^1_D(\Omega;\R^3))$, hence we deduce
  the existence of $u\in L^2(0,T;H^1_D(\Omega;\R^3))$ such that
  \eqref{eq:lem-conv-uN} holds.

  We next want to verify the minimizing property of the limit function
  $u$. Let $\fhi\in H^1_D(\Omega;\R^3)$ be arbitrary. For any
  non-negative $\theta\in C^\infty_c(0,T; \R)$ there holds
  \begin{align*}
    &\int_0^T \theta(t)\Big(\int_{\Omega} W_e(\nabla u(t), p(t))\,  -
     \int_{\Omega} f(t)\cdot  u(t)\,\Big) dt\\
    &\qquad \le \liminf_N 
    \int_0^T \theta(t)\Big(\int_{\Omega} W_e(\nabla \bar u^N(t), \bar p^N(t))\,  -
    \int_{\Omega} \bar f^N(t)\cdot \bar u^N(t)\,\Big) dt\\
    &\qquad \le
    \liminf_N  \int_0^T \theta(t)
    \Big(\int_{\Omega} W_e(\nabla \fhi, \bar p^N(t))\, 
    -   \int_{\Omega} \bar f^N(t)\cdot \fhi\,\Big) dt \\
    &\qquad =  \int_0^T \theta(t)\Big(\int_{\Omega} W_e(\nabla \fhi, p(t))\,  
    -   \int_{\Omega} f(t)\cdot \fhi\,\Big) dt\,,
  \end{align*}
  where we used convexity of $W_e$ and weak convergence of $\bar u^N$
  in the first inquality, the minimization property of $\bar u^N$ in
  the second inequality, and the strong convergence of $\bar p^N$
  together with the 2-growth of $Q$ in the last equality.  Since
  $\theta$ was arbitrary, this shows, for almost every $t\in (0,T)$,
  \begin{align}
    & \int_{\Omega} W_e(\nabla u(t), p(t))\,  -
     \int_{\Omega} f(t)\cdot  u(t)\,
     \leq\,\int_{\Omega} W_e(\nabla \fhi - p(t))\,  
    -   \int_{\Omega} f(t)\cdot \fhi\,. \label{eq:dpp-S1-pre}
  \end{align}
  We next argue as in the proof of Lemma \ref{lem:item-F2_>0} and show
  that the last inequality holds for almost all $t\in (0,T)$ and all
  $\fhi\in H^1_D(\Omega;\R^3)$ (i.e.: the set of admissible $t$'s can
  be chosen independent of $\fhi$). We define the functional $\calH_e:
  H^1_D(\Omega;\R^3)\to [0,\infty]$,
  \begin{align*}
    \calH_e(\fhi)\,=\, \int_\Omega H_e(\nabla^s\fhi(x))\,dx\,,
  \end{align*}
  and choose a dense subset
  $A=\big\{\big(\fhi_i,\calH_e(\fhi_i)\big)\,:\, \fhi_i\in
  \dom(\calH_e), i\in\N\big\}$ of
  $\graph\calH_e|_{\dom(\calH_e)}\subset
  H^1_D(\Omega;\R^3)\times\R$. We deduce that there exists a set
  $B\subset (0,T)$ with $|B|=0$ such that \eqref{eq:dpp-S1-pre} holds
  for any $\fhi \in \{\fhi_i\,:\,i\in\N\}$ and any $t\in
  (0,T)\setminus B$. Again by the $2$-growth of $Q$ and the density of
  $A\subset \graph\calH_e|_{\dom(\calH_e)}$ we deduce that \eqref
  {eq:dpp-S1-pre} holds for any $\fhi \in H^1_D(\Omega;\R^3)$ and any
  $t\in (0,T)\setminus B$. This provides \eqref{eq:lem-min-u}.
\end{proof}

\begin{lemma}[Item \ref{it:E2}]
  Let Assumptions \ref{ass:convergence} and \ref{ass:approx-sol} be
  satisfied.  Then \eqref{eq:E2} of Item \ref{it:E2} holds for the
  limit functions $(p,\Sigma)$.
\end{lemma}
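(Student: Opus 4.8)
The plan is to pass to the limit in the approximate energy inequality \eqref{eq:approx-E2}, treating the four groups of terms separately and collecting them at the end. Writing the inequality at a fixed $t$ outside an exceptional null set, we must control: (i) the initial term $\calE(\bar p^N(0);\bar f^N(0))$, (ii) the final term $-\calE(\bar p^N(t);\bar f^N(t))$, (iii) the two dissipation integrals $\int_0^t\{\calR(\del_t\hat p^N)+\calR^*(\bar\Sigma^N)\}$, and (iv) the right-hand side $-\int_0^t\inf_{\tilde u^N}\la\del_t\hat f^N,\tilde u^N\ra$.

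First I would handle the easy endpoints. At $s=0$ we have $\bar p^N(0)=p_0=p(0)$ and $\bar f^N(0)=f(0)$ for all $N$, so the initial term is constant in $N$ and equals $\calE(p(0);f(0))$ exactly; no limiting argument is needed. For the final term at $s=t$, I would use strong convergence $\bar p^N(t)\to p(t)$ in $L^2$ (a consequence of \eqref{eq:strong-pN-bar} after passing to a subsequence giving a.e.-in-$t$ strong convergence in $L^2(\Omega)$), the Lipschitz continuity of $\calE_1(\cdot;\cdot)$ from Lemma \ref{lem:marginal} together with $\bar f^N(t)\to f(t)$, and the lower semicontinuity of $\calW_p$; this gives $\limsup_N[-\calE(\bar p^N(t);\bar f^N(t))]\le -\calE(p(t);f(t))$ as required (the $\calW_p$ part only drops, which is the favorable direction). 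As in Lemma \ref{lem:item-F2_>0} I would first establish this for a.e.\ $t$ after testing with $\theta\in C^\infty(0,T)$, $\theta\ge 0$, to avoid issues with the null set depending on $t$.

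For the dissipation terms I would use weak lower semicontinuity of convex integral functionals: $\calR$ is convex and continuous on $L^2$, and $\del_t\hat p^N\weakto\del_t p$ in $L^2(0,T;L^2(\Omega))$ by \eqref{eq:dtpN-hat}, so $\int_0^t\calR(\del_t p)\le\liminf_N\int_0^t\calR(\del_t\hat p^N)$ (treating $\mathbf 1_{(0,t)}\del_t\hat p^N\weakto\mathbf 1_{(0,t)}\del_t p$, or equivalently integrating against $\theta$ supported in $(0,T)$ and localizing). Likewise $\calR^*$ is convex continuous and $\bar\Sigma^N\weakto\Sigma$ in $L^2(0,T;L^2(\Omega))$ by \eqref{eq:SigmaN}, so $\int_0^t\calR^*(\Sigma)\le\liminf_N\int_0^t\calR^*(\bar\Sigma^N)$. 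For the right-hand side I would invoke Lemma \ref{lem:3.5}: it produces minimizers $\bar u^N(t)\in\frakE(\bar p^N(t);\bar f^N(t))$ realizing the infimum, with $\bar u^N\weakto u$ in $L^2(0,T;H^1_D)$ and $u(t)\in\frakE(p(t);f(t))$ for a.e.\ $t$. Since $\del_t\hat f^N\to\del_t f$ strongly in $H^1(0,T;H^{-1}_D)$, hence in $L^2(0,T;H^{-1}_D)$, the product $\la\del_t\hat f^N(s),\bar u^N(s)\ra$ converges in $L^1(0,T)$ (strong times weak), and therefore $\int_0^t\la\del_t\hat f^N,\bar u^N\ra\to\int_0^t\la\del_t f,u\ra$. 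Because $u(s)\in\frakE(p(s);f(s))$ we have $\la\del_t f(s),u(s)\ra\ge\inf_{\tilde u\in\frakE(p(s),f(s))}\la\del_t f(s),\tilde u\ra$, so in the limit $-\int_0^t\la\del_t\hat f^N,\bar u^N\ra\le-\int_0^t\inf_{\tilde u}\la\del_t f,\tilde u\ra+o(1)$, which is the bound in \eqref{eq:E2}. The $o(1)$ error term in \eqref{eq:approx-E2} vanishes by assumption.

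Putting the four pieces together and using that $\liminf$ of a sum dominates the sum of $\liminf$'s (with the single $\limsup$ term handled by the sign), we obtain \eqref{eq:E2} for a.e.\ $t\in(0,T)$. The main obstacle is the final-term estimate $-\calE(\bar p^N(t);\bar f^N(t))\to$ something $\le-\calE(p(t);f(t))$: here we only have \emph{weak} convergence of $\bar p^N$ in the gradient/curl topologies, so $\calW_p$ can only be passed to the limit as a lower bound, and the time-dependence through $\bar f^N$ inside $\calE_1$ must be absorbed via the Lipschitz estimate of Lemma \ref{lem:marginal} — which is precisely why uniform $L^\infty$-in-time bounds on $\bar p^N$ and $\bar f^N$ were imposed in Assumption \ref{ass:convergence}. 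A secondary technical point is synchronizing all the a.e.-in-$t$ statements onto a single null set, which is handled, as in the previous lemma, by testing against smooth nonnegative $\theta$ and a density argument rather than pointwise evaluation.
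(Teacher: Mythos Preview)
Your proposal is correct and follows essentially the same route as the paper: invoke Lemma~\ref{lem:3.5} to produce the minimizing displacements $\bar u^N$ and their weak limit $u$, use lower semicontinuity for the energy term at $s=t$ (via strong $L^2$ convergence of $\bar p^N(t)$ combined with the $L^\infty$-in-time bound to get weak convergence in $W^{1,r}\cap H_0(\Omega,\curl)$), use convex lower semicontinuity for the $\calR$ and $\calR^*$ integrals, and pass to the limit on the right-hand side by the strong-weak pairing $\del_t\hat f^N\to\del_t f$ against $\bar u^N\weakto u$. The only cosmetic differences are that the paper appeals to the lower-semicontinuity statement of Lemma~\ref{lem:marginal}, Item~\ref{it:lem2.5-4a}, rather than the Lipschitz property you cite (both work here since $\bar p^N(t)\to p(t)$ strongly), and the paper argues directly pointwise in $t$ without the auxiliary test function $\theta$---there is no parameter to quantify over as in Lemma~\ref{lem:item-F2_>0}, so the union of the finitely many null sets involved is already null and the $\theta$-localization is unnecessary, though harmless.
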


\begin{proof}
  We choose a sequence $(\bar u^N)_N$ with $\bar u^N(t)\in \frakE(\bar p^N(t); \bar
  f^N(t))$ as in Lemma \ref{lem:3.5}; in particular, we obtain a weak
  limit $u\in L^2(0,T;H^1_D(\Omega;\R^3))$ such that
  \eqref{eq:lem-conv-uN} holds.

  From \eqref{eq:strong-pN-bar} we deduce that, for almost all $t\in
  (0,T)$, the sequence $(\bar p^N(t))_N$ is strongly convergent in
  $L^2(\Omega)$ to the limit $p(t)$. We can additionally assume that
  the sequence $(\bar p^N(t))_N$ is uniformly bounded in
  $H_0(\Omega,\curl)\cap W^{1,r}(\Omega;\Rnn)$. For such a $t$ we have
  the weak convergence
  \begin{align}
    \bar p^N(t)\,\weakto\, p(t)\quad\text{ in } H_0(\Omega,\curl)\cap
    W^{1,r}(\Omega;\Rnn)\,. \label{eq:weak-pN}
  \end{align}
  
  The approximate solution property \eqref {eq:approx-E2} and property
  \eqref{eq:lem-min-uN} of $\bar u^N(t)$ yield
  \begin{align}
    &\calE\big(\bar p^N(s);\bar f^N(s)\big) \Big|_{s=0}^t+ \int_0^t
    \calR(\del_t \hat p^N(s)) +
    \calR^*(\bar \Sigma^N(s)) \, ds \notag\\
    &\qquad \le -\int_0^t \big\la \partial_t \hat f^N(s), \bar
    u^N(s)\big\ra\, ds + o(1)\,. \label{eq:lem3.6-pf1}
  \end{align}
  The first term on the left-hand side is $\calE_1\big(\bar
  p^N(t);\bar f^N(t)\big) + \calW_p(\bar p^N(t))$. Using the lower
  semi-continuity of Lemma \ref{lem:marginal}, Item
  \ref{it:lem2.5-4a}, the convergence \eqref{eq:weak-pN}, and the
  convexity of $\calW_p$ we deduce the lower semicontinuity of this
  term in the limit,
  \begin{align*}
    \calE\big(p(s);f(s)\big)\Big|_{s=0}^t
    &\leq\, \liminf_{N\to\infty} \calE\big(\bar p^N(s);\bar f^N(s)\big)\Big|_{s=0}^t
  \end{align*}
  for almost all $t\in (0,T)$, where we exploited that the initial
  values are fixed. By the convergences \eqref{eq:SigmaN} and
  \eqref{eq:dtpN-hat}, the growth assumptions \eqref {eq:cond-QR-2}
  and \eqref {eq:cond-QR-3}, and the convexity of $\calR$ and
  $\calR^*$, we also have
  \begin{align*}
    \int_0^t \calR(\del_t p(s)) + \calR^*(\Sigma(s)) \, ds\,\leq\,
    \liminf_{N\to\infty} \int_0^t \calR(\del_t \hat p^N(s)) +
    \calR^*(\bar \Sigma^N(s))\,ds\,.
  \end{align*}
  Finally, \eqref{eq:fN-hat} and \eqref{eq:lem-conv-uN} imply the
  convergence of the right-hand side of \eqref{eq:lem3.6-pf1}. We
  therefore obtain
  \begin{align*}
    &\calE\big(p(s);f(s)\big)\Big|_{s=0}^t 
    + \int_0^t  \calR(\del_t p(s)) + \calR^*(\Sigma(s)) \, ds\\
    &\qquad \leq\, \liminf_{N\to\infty}
    \Big(\calE\big(\bar p^N(s);\bar f^N(s)\big)\Big|_{s=0}^t
    +  \int_0^t  \calR(\del_t \hat p^N(s)) + \calR^*(\bar \Sigma^N(s))\,ds\Big)\\
    &\qquad \leq\, -\int_0^t \big\la \partial_t f(s),  u(s)\big\ra\, ds 
    \leq\, -\int_0^t \inf_{\tilde u\in
      \frakE(p(s),f(s))}\big\la \partial_t f(s), \tilde u\big\ra\, ds\,,
  \end{align*}
  and have verified \eqref{eq:E2} for the limit functions.
\end{proof}

\subsection{Stability for $\delta = 0$}

In this section, we consider the case without gradient-term in the
plastic energy, i.e.\ $\delta=0$ in $\calW_p$ of \eqref
{eq:W-sub-p}. We can treat this case only under further structural
assumptions on the energy: We demand $H_e\equiv 0$ and $H_p\equiv
0$. Our aim is to show a stability result that replaces Proposition
\ref {prop:delta_>0} in the case $\delta=0$.  The existence result of
Theorem \ref {thm:main-eq} will be a consequence of the stability
property.

We proceed along the lines of the case $\delta >0$.  We start with our
assumptions on the approximate solution sequence. The main difference
is that only a weak convergence of the sequences $(\hat p^N)_N$ and
$(\bar p^N)_N$ can be assumed.

\begin{assumption}[Convergence properties,
  $\delta=0$]\label{ass:convergence-d=0}
  We consider a sequence of approximate solutions $(\bar p^N,
  \bar\Sigma^N)_N$ together with a sequence of loads $(\bar f^N)_N$
  that satisfy
  \begin{align}
    \bar p^N \,&\weakto\, p \quad\text{ in }
    L^2(0,T;H_0(\Omega,\curl))\,,
    \label{eq:pN-d=0}\\
    \bar \Sigma^N \,&\weakto\, \Sigma \quad\text{ in }L^2(0,T;L^2(\Omega,\Rnn))\,, 
    \label{eq:SigmaN-d=0}\\
    \bar f^N\,&\to\, f \quad\text{ in }L^2(0,T;L^2(\Omega;\R^3))\,,
    \label{eq:fN-bar-d=0}
  \end{align}
  as $N\to \infty$. For approximations $(\hat p^N)_N$ and $(\hat
  f^N)_N$ we assume, as $N\to \infty$,
  \begin{align}
    \hat p^N \,&\weakto\, p \quad\text{ in }H^1(0,T;L^2(\Omega,\Rnn))\,, 
    \label{eq:dtpN-hat-d=0}\\
    \hat f^N\,&\to\, f \quad\text{ in }H^1(0,T;H^{-1}_D(\Omega;\R^3)) \,.
    \label{eq:fN-hat-d=0}
  \end{align}
  We additionally assume: $(\hat p^N)_N$ and $(\bar p^N)_N$ are
  bounded in $L^\infty(0,T; H_0(\Omega,\curl))$, $(\nabla\cdot
  \bar\Sigma^N)_N$ is bounded in $L^2(0,T; L^2(\Omega))$, $(\bar
  f^N)_N$ is bounded in $L^\infty(0,T; H_D^{-1}(\Omega))$. Finally, we
  impose a weak regularity of $\bar p^N$ in time: We demand
  \begin{equation}
    \label{eq:shifts-time-compactness}
    \sup_N \left\| \bar p^N(. + \rho) - \bar p^N(.)\right\|^2_{L^2(0,T-\rho;L^2(\Omega))} 
    \to 0\text{ as } \rho \to 0\,.
  \end{equation}
  Regarding initial values we assume that $\hat p^N(0) = \bar
  p^N(0)=p(0)=p_0$ and $\hat f^N(0) = \bar f^N(0) = f(0)$ holds for
  all $N\in\N$. We assume that $\bar p^N$, $\bar f^N$, and $\del_t
  \hat f^N$ are simple functions for all $N\in\N$.
\end{assumption}

In the case $\delta=0$ we will impose the same approximate solution
properties as in the case $\delta>0$, i.e.\,those of Assumption \ref
{ass:approx-sol}. We find that the analog of Proposition \ref
{prop:delta_>0} holds.

\begin{proposition}\label{prop:delta_=0}
  Let Assumptions \ref{ass:approx-sol} and \ref{ass:convergence-d=0}
  be satisfied. Then the limiting pair $(p, \Sigma)$ is a weak
  solution of the original problem in the sense of Definition
  \ref{def:solution2}.
\end{proposition}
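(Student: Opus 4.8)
The plan is to follow the architecture of the $\delta>0$ case as closely as possible, splitting the argument into the verification of (R$_2$), (F$_2$), and (E$_2$) for the limit pair $(p,\Sigma)$, and to isolate the one place where the loss of $W^{1,r}$-compactness forces a genuinely new argument. Item (R$_2$) is immediate: the weak limits $p\in L^2(0,T;H_0(\Omega,\curl))$, $p\in H^1(0,T;L^2(\Omega;\Rnn))$, $\Sigma\in L^2(0,T;L^2(\Omega;\Rnn))$ lie in the correct spaces by \eqref{eq:pN-d=0}, \eqref{eq:dtpN-hat-d=0}, \eqref{eq:SigmaN-d=0}, and since $\delta=0$ there is no $W^{1,r}$-requirement to check; the initial condition $p|_{t=0}=p_0$ follows from $\hat p^N(0)=p_0$ and the continuity of the trace at $t=0$ on $H^1(0,T;L^2)$.

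The essential new ingredient replaces the strong $L^2(\Omega_T)$-convergence \eqref{eq:strong-pN-bar} that was used freely in the $\delta>0$ proofs. Here $(\bar p^N)_N$ is only bounded in $L^\infty(0,T;H_0(\Omega,\curl))$ and satisfies the uniform time-translation estimate \eqref{eq:shifts-time-compactness}, which gives equicontinuity in time but no spatial compactness. The idea — advertised in the introduction — is a compensated-compactness / div-curl argument: we pair $\bar p^N$ (for which $\curl \bar p^N$ is controlled in $L^2$) against $\bar\Sigma^N$ (for which $\nabla\cdot\bar\Sigma^N$ is controlled in $L^2$ by the new hypothesis in Assumption \ref{ass:convergence-d=0}), using a Helmholtz decomposition of $\bar\Sigma^N$ on the simply connected domain with $C^{1,1}$ boundary to split it into a gradient part and a divergence-free part, so that the div-curl lemma yields convergence of the bilinear pairing $\la \bar\Sigma^N,\bar p^N\ra$ to $\la\Sigma,p\ra$ in the distributional-in-time sense. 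The time-compactness \eqref{eq:shifts-time-compactness} is what upgrades this to an honest pointwise-in-$t$ statement after integrating against test functions $\theta\in C^\infty_c(0,T)$, exactly as the localization step was carried out via a countable dense set of times in Lemma \ref{lem:item-F2_>0}. With $H_e\equiv 0$, $H_p\equiv 0$ the marginal energy $\calE_1(p;f)$ depends on $p$ only through $Q(\sym(\nabla\fhi-p))$, so its lower semicontinuity and continuity properties under mere weak $L^2$-convergence of $p$ follow from convexity and quadratic growth of $Q$, and $\calW_p(p)=\int_\Omega|\curl p|^2$ is weakly lower semicontinuous on $H_0(\Omega,\curl)$; this is enough to run the (F$_2$) argument verbatim from Lemma \ref{lem:item-F2_>0}, now with the countable dense set taken in $\graph\calW_p|_{\dom(\calW_p)}\subset L^2(\Omega;\Rnn)\times\R$ and with the div-curl lemma supplying the convergence of the term $\la\bar\Sigma^N,\bar p^N\ra$ that previously came for free from strong convergence.

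For (E$_2$) the structure is again that of the $\delta>0$ lemma: reconstruct displacements $\bar u^N(t)\in\frakE(\bar p^N(t);\bar f^N(t))$ realizing the infimum in the right-hand side of \eqref{eq:approx-E2} (Lemma \ref{lem:3.5} applies — it only uses Assumption \ref{ass:convergence}'s consequences that survive here, namely the $L^2$-boundedness of $\bar p^N$ and $\bar f^N$ and Lemma \ref{lem:marginal}; with $H_e\equiv0$ the density-in-$\fhi$ step is trivial), pass to a weak limit $u\in L^2(0,T;H^1_D(\Omega;\R^3))$ with $u(t)\in\frakE(p(t);f(t))$, then take $\liminf_N$ in \eqref{eq:approx-E2}: lower semicontinuity of $\calE(\,\cdot\,;f)$ at time $t$ (again from convexity/quadratic growth of $Q$ plus weak-lsc of $\int|\curl\,\cdot\,|^2$) handles the boundary term, convexity and quadratic growth of $\calR,\calR^*$ with the weak convergences \eqref{eq:SigmaN-d=0}, \eqref{eq:dtpN-hat-d=0} handle the dissipation integral, and \eqref{eq:fN-hat-d=0} together with $\bar u^N\weakto u$ handles the right-hand side. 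I expect the main obstacle to be precisely the rigorous div-curl step for (F$_2$): one must check that the tangential boundary condition $p\times\nu=0$ encoded in $H_0(\Omega,\curl)$ is compatible with the boundary behavior of the Helmholtz decomposition of $\bar\Sigma^N$ so that no boundary terms obstruct the compensated-compactness pairing, and that the time-translation bound \eqref{eq:shifts-time-compactness} is genuinely strong enough to convert the space-time distributional convergence into the pointwise-a.e.-in-$t$ inequality \eqref{eq:F2} — this is the point where the $\delta=0$ analysis is strictly more delicate than the $\delta>0$ one.
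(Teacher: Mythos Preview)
Your overall architecture is right, and your treatment of (R$_2$) and the compensated-compactness idea for the product term $\la\bar\Sigma^N,\bar p^N\ra$ in (F$_2$) is essentially what the paper does (the paper applies a \emph{global} div-curl lemma directly to the pair $(\bar\Sigma^N,\bar p^N)$, handling the time variable by mollifying $\bar\Sigma^N$ in $t$ and using the shift bound \eqref{eq:shifts-time-compactness}, rather than Helmholtz-decomposing $\bar\Sigma^N$ --- but the effect is the same and the boundary condition $\bar p^N\in H_0(\Omega,\curl)$ is exactly what makes the global version go through).

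The genuine gap is in your displacement reconstruction. You assert that ``Lemma \ref{lem:3.5} applies'', but it does not: the last equality in the four-line computation there,
\begin{align*}
  \liminf_N \int_0^T \theta\Big(\int_\Omega W_e(\nabla\fhi,\bar p^N) - \int_\Omega \bar f^N\cdot\fhi\Big)\,dt
  \,=\, \int_0^T \theta\Big(\int_\Omega W_e(\nabla\fhi,p) - \int_\Omega f\cdot\fhi\Big)\,dt,
\end{align*}
uses the \emph{strong} convergence $\bar p^N\to p$ in $L^2(\Omega_T)$ from \eqref{eq:strong-pN-bar}, because $W_e(\nabla\fhi,p)=Q(\sym(\nabla\fhi-p))$ is quadratic in $p$. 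Under mere weak convergence, convexity of $Q$ gives $\liminf\ge$, which is the wrong direction for the chain of inequalities establishing $u(t)\in\frakE(p(t);f(t))$. This is not a cosmetic point: without it you cannot conclude \eqref{eq:lem-min-u}, and then the right-hand side of (E$_2$) is not controlled.

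The paper repairs this by letting the competitor depend on $N$: given $\fhi$, one constructs $\fhi^N\in L^2(0,T;H^1_D(\Omega;\R^3))$ with $\fhi^N\weakto\fhi$ in $L^2(0,T;H^1)$ and, crucially, $\nabla\fhi^N-\bar p^N\to\nabla\fhi-p$ \emph{strongly} in $L^2(\Omega_T)$. The sequence $\fhi^N$ comes from the Helmholtz decomposition of $q^N:=\nabla\fhi+\bar p^N-p$: writing $q^N=\nabla\fhi^N+\curl\Psi^N$ with appropriate mixed boundary conditions, one shows $\xi^N:=\curl\Psi^N\to 0$ strongly by combining (i) boundedness of $\curl\xi^N=\curl(\bar p^N-p)$ in $L^2$, (ii) the time-shift estimate \eqref{eq:shifts-time-compactness}, which transfers through the continuous linear Helmholtz projection to give Fr\'echet--Kolmogorov compactness and hence $\Psi^N\to 0$ strongly, and (iii) an integration by parts $\int|\xi^N|^2=\int\curl\xi^N\cdot\Psi^N\to 0$. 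So the Helmholtz decomposition you anticipated does enter the proof, but in Step 3 (building adapted competitors for the minimization) rather than in the (F$_2$) product-term step where you placed it.
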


\begin{proof}
  {\em Step 1: Item \ref{it:R2}.}  As weak limits, the functions $p$
  and $\Sigma$ are in the function spaces of Item \ref{it:R2}:
  $\Sigma\in L^2(0,T;L^2(\Omega; \Rnn))$, $p\in H^{1}(0,T;L^2(\Omega;
  \Rnn)) \cap L^2(0,T;H_0(\Omega,\curl))$.

  \medskip {\em Step 2: Item \ref{it:F2}.}  The proof of Item
  \ref{it:F2} is analogous to the case $\delta>0$, see Lemma \ref
  {lem:item-F2_>0}. The only difference regards the limit procedure of
  the product term, leading to \eqref {eq:F2-eta-pre}: For arbitrary
  $\theta\in C^\infty_c((0,T);\R)$ we claim
  \begin{equation}\label{eq:div-curl-application-1}
    \int_0^T \theta(t) \la \bar \Sigma^N(t), \bar p^N(t)\ra\, dt \to
    \int_0^T \theta(t) \la \Sigma(t), p(t)\ra\, dt
  \end{equation}
  as $N\to \infty$.  This limit is a consequence of the global
  div-curl lemma. The {\em global} div-curl lemma yields not only the
  distributional convergence of a product of weakly convergent
  sequences, but also the convergence of the integral of the
  product. For a global div-curl lemma, one always has to make use of
  boundary conditions. In our case, we know that tangential components
  of $\bar p^N$ vanish on the boundary by $\bar p^N(t) \in
  H_0(\Omega,\curl)$, see \eqref {eq:bdry-p}. For a proof of the
  global div-curl lemma without $t$-dependence see e.g.\,Lemma 6.1 in
  \cite {Schweizer-Friedrichs-2017}.

  For the case with $t$-dependence as in \eqref
  {eq:div-curl-application-1}, we argue as follows. We use a small
  parameter $\rho>0$ and a smooth sequence of symmetric mollifiers
  $\fhi_\rho: \R\to \R$. Functions that are defined on the interval
  $(0,T)$ are always identified with their trivial extension to all of
  $\R$. We claim that the following modification of \eqref
  {eq:div-curl-application-1} is valid:
  \begin{equation}\label{eq:div-curl-application-2}
    \int_0^T \theta(t) 
    \la (\fhi_\rho * \bar \Sigma^N)(t), \bar p^N(t)\ra\, dt \to
    \int_0^T \theta(t) \la (\fhi_\rho * \Sigma)(t), p(t)\ra\, dt\,.
  \end{equation}

  \smallskip {\em Step 2a: Verification of \eqref
    {eq:div-curl-application-2}.}  The sequence $\bar p^N$ is bounded
  in $L^\infty(0,T; L^2(\Omega))$ and, by \eqref
  {eq:shifts-time-compactness}, pre-compact in $L^2(0,T;
  H^{-1}(\Omega))$ \cite[Theorem 1]{Simo87}. We therefore have $\bar
  p^N(t) \to p(t)$ in $H^{-1}(\Omega)$ for almost every $t\in
  (0,T)$. By the boundedness assumptions, this yields $\bar p^N(t)
  \weakto p(t)$ in $L^2(\Omega)$ with $\curl\, \bar p^N(t)$ bounded in
  $L^2(\Omega)$ for almost every $t$.

  The function $\fhi_\rho * \bar \Sigma^N$ is of class $C^1([0,T],
  L^2(\Omega))$ with bounds that are independent of $N$. We therefore
  have $(\fhi_\rho * \bar \Sigma^N)(t) \weakto (\fhi_\rho *
  \Sigma)(t)$ in $L^2(\Omega)$ for every $t\in (0,T)$.  Furthermore,
  $\nabla\cdot (\fhi_\rho * \bar \Sigma^N)(t)$ is bounded in
  $L^2(\Omega)$.  The time-independent global div-curl lemma can be
  applied to the above functions and provides $\theta(t) \la
  (\fhi_\rho * \bar \Sigma^N)(t), \bar p^N(t)\ra\, \to \theta(t) \la
  (\fhi_\rho * \Sigma)(t), p(t)\ra$ for almost every $t$. Dominated
  convergence implies \eqref {eq:div-curl-application-2}.

  \smallskip {\em Step 2b: Verification of \eqref
    {eq:div-curl-application-1}.} Concerning the right hand sides of
  \eqref {eq:div-curl-application-1} and \eqref
  {eq:div-curl-application-2} we observe that $\fhi_\rho * \Sigma \to
  \Sigma$ holds in $L^2(0,T;L^2(\Omega))$.

  Concerning the left hand sides, we observe
  \begin{align*}
    &\int_0^T  \big\la \bar \Sigma^N(t),
    \fhi_\rho * (\theta\bar p^N)(t) - (\theta\bar p^N)(t)\big\ra\, dt\\
    &\quad = \int_\R \int_\R \big\la \bar \Sigma^N(t),
    \fhi_\rho(s)
    \big((\theta \bar p^N)(t-s) - (\theta\bar p^N)(t)\big)\big\ra\, dt\, ds
    \displaybreak[2]\\
    &\quad \le \int_\R \| \bar \Sigma^N \|_{L^2(0,T;L^2(\Omega))}
    \| (\theta \bar p^N)(.-s) - \theta\bar p^N \|_{L^2(0,T;L^2(\Omega))} \fhi_\rho(s) \, ds\,.
  \end{align*}
  For a small parameter $\rho>0$, the right-hand side is small,
  uniformly in $N$:
  \begin{align*}
    &\sup_{|s|<\rho}\| (\theta \bar p^N)(.-s) - \theta\bar p^N \|_{L^2(0,T;L^2(\Omega))}\\
    &\quad \leq\, 2\|\theta\|_{C^1([0,T])}\Big(\sup_{|s|<\rho}\| \bar
    p^N(.-s) - \bar p^N \|_{L^2(0,T;L^2(\Omega))} + \sqrt{\rho}\| \bar
    p^N \|_{L^2(0,T;L^2(\Omega))}\Big)\,.
  \end{align*}
  The smallness of the first contribution is guaranteed by the
  compactness property \eqref{eq:shifts-time-compactness}, the second
  contribution is small by the factor $\sqrt{\rho}$. We obtain that
  the convergence \eqref {eq:div-curl-application-2} provides the
  convergence \eqref {eq:div-curl-application-1}.

  \medskip {\em Step 3: Construction of displacement fields.} In the
  case $\delta>0$, the energy minimizing displacement fields have been
  constructed in Lemma \ref {lem:3.5}. The argument is similar in the
  case $\delta=0$, we sketch here how the four-line calculation in the
  proof of Lemma \ref {lem:3.5} must be altered. 

  We assume that a minimizing sequence $\bar u^N$ is already
  chosen. Let $\fhi\in L^2(0,T; H^1_D(\Omega;\R^3))$ be an arbitrary
  test-function. We claim that we can find a sequence $(\fhi^N)_N$
  with $\fhi^N\in L^2(0,T; H^1_D(\Omega;\R^3))$ such that
  \begin{equation}
    \label{eq:fhi-N}
    \fhi^N \weakto \fhi \text{ in } L^2(0,T; H^1(\Omega))\,
    \, \text{ and }\,
    \nabla \fhi^N - \bar p^N \to \nabla\fhi - p \text{ in } 
    L^2(0,T; L^2(\Omega))\,.
  \end{equation}

  \smallskip
  {\em Step 3a: Conclusion using \eqref {eq:fhi-N}.}  Let us assume
  that we have a sequence $(\fhi^N)_N$ satisfying \eqref
  {eq:fhi-N}. We can calculate, using first the minimality of $\bar
  u^N$, then the definition of $W_e$ and $H_e \equiv 0$, and finally the
  convergence properties of the different sequences:
  \begin{align*}
    &\liminf_N  \int_0^T \theta(t) \left(
    \int_{\Omega} W_e(\nabla \bar u^N(t), \bar p^N(t))
    -  \int_{\Omega} \bar f^N(t)\cdot \bar u^N(t) \right) \,dt \displaybreak[2]\\
    &\qquad \le \liminf_N 
    \int_0^T \theta(t) \left( \int_{\Omega} W_e(\nabla \fhi^N(t), \bar p^N(t)) 
    -  \int_{\Omega} \bar f^N(t)\cdot \fhi^N(t)\right) \,dt \displaybreak[2]\\
    &\qquad = \liminf_N  \int_0^T \theta(t) \left(
      \int_{\Omega} Q(\sym(\nabla \fhi^N(t) - \bar p^N(t))) 
    -  \int_{\Omega} \bar f^N(t)\cdot \fhi^N(t)\right) \,dt\displaybreak[2]\\
    &\qquad =   \int_0^T \theta(t) \left(
    \int_{\Omega} Q(\sym(\nabla \fhi(t) - p(t))) 
    -  \int_{\Omega} f(t)\cdot \fhi(t) \right)\, dt\,.
  \end{align*}
  In the last step we exploited \eqref {eq:fhi-N} and the $2$-growth
  of $Q$ from \eqref{eq:cond-QR-1}.  The rest of the proof is as in
  the case $\delta>0$.

  \smallskip {\em Step 3b: Construction of $\fhi^N$ satisfying \eqref
    {eq:fhi-N}.} We obtain the sequence $(\fhi^N)_N$ from the
  Helmholtz decomposition in space of the function
  \begin{equation}
    q^N := \nabla \fhi + \bar p^N - p\,.
    \label{eq:q-N-def}
  \end{equation}
  The Helmholtz decomposition of $q^N(t)$
  provides a gradient-potential $\fhi^N(t)$ and a curl-potential
  $\Psi^N(t)$ such that
  \begin{equation}
    \label{eq:Helmholz-qN}
    q^N(t) = \nabla \fhi^N(t) + \curl\,\Psi^N(t)\,.
  \end{equation}
  Since $q^N$ is a matrix field $\Omega\to \Rnn$, we apply the usual
  Helmholtz decomposition for vector fields $\Omega\to \R^3$ to each
  row of $q^N$. This yields the desired result, since the $k$-th row
  of $\nabla\fhi$ is $\nabla\fhi_k$, and the $k$-th row of
  $\curl\,\Psi$ is $\curl\Psi_{k,.}$.

  On the potentials, we impose $\nabla\cdot \Psi^N(t) = 0$ (for each
  row $\Psi_{k,.}^N$), and the following boundary conditions: On
  $\Gamma_D$, we demand that the normal component of $\Psi^N(t)$
  vanishes and $\fhi^N(t)|_{\Gamma_D} = 0$, hence $\fhi^N(t) \in
  H^1_D(\Omega)$.  On $\del\Omega \setminus \Gamma_D$, we demand that
  tangential components of $\Psi^N(t)$ vanish.  The existence of the
  two potentials together with $H^1(\Omega)$-estimates is guaranteed
  by the Helmholtz decomposition result, see e.g.\,Theorem 4.2 of
  \cite {Schweizer-Friedrichs-2017} and, for mixed boundary
  conditions, \cite {MR3542004}.

  By the boundedness of the potentials, we may assume the weak
  convergence of $\fhi^N, \nabla \fhi^N, \Psi^N, \nabla \Psi^N$ in the
  space $L^2(0,T; L^2(\Omega))$.  The weak limits of the potentials
  provide a Helmholtz decomposition of the weak limit of $q^N$, which
  is $\nabla\fhi$. Uniqueness of the Helmholtz decomposition implies
  $\fhi^N \weakto \fhi$ and and $\Psi^N\weakto 0$ in $L^2(0,T;
  H^1(\Omega))$.  Relation \eqref {eq:fhi-N} is verified once we show
  $q^N - \nabla\fhi^N \to 0$ strongly in $L^2(0,T; L^2(\Omega))$ or,
  equivalently,
  \begin{equation}
    \xi^N := \curl\,\Psi^N \to 0
    \text{ in } L^2(0,T; L^2(\Omega))\,.\label{eq:2943612}
  \end{equation}

  For the subsequent argument we observe that $q^N(t)$ and $\nabla
  \fhi^N(t)$ have vanishing tangential components on $\Gamma_D$. This
  implies that also $\xi^N(t) = \curl\,\Psi^N(t)$ has vanishing
  tangential components on $\Gamma_D$.
  
  The boundedness of $\Psi^N\in L^2(0,T; H^1(\Omega))$ implies the
  spatial regularity of this sequence. We furthermore know that, for
  small $|\rho|$, $\rho\in \R$, differences $\bar p^N(. + \rho) - \bar
  p^N(.)$ are small in $L^2(0,T; L^2(\Omega))$, independent of $N$, by
  \eqref {eq:shifts-time-compactness}. The same is true for
  $\nabla\fhi$ and $p$. Since the Helmholtz decomposition yields a
  continuous linear map $q^N(t) \mapsto \Psi^N(t)$ from $L^2(\Omega)$
  to $H^1(\Omega)$, this implies that also the sequence $\Psi^N$ has
  small differences $\Psi^N(. + \rho) - \Psi^N(.)$ in $L^2(0,T;
  H^1(\Omega))$. The Fr{\'e}chet-Kolmogorov compactness criterion
  yields the strong convergence $\Psi^N\to 0$ in $L^2(0,T;
  L^2(\Omega))$.
  
  We can now conclude \eqref {eq:2943612}. We first note that the
  sequence $\curl\,\xi^N = \curl\,(q^N - \nabla \fhi^N) =
  \curl\,(\nabla \fhi + \bar p^N - p - \nabla \fhi^N) = \curl\,(\bar
  p^N - p)$ is bounded in $L^2(0,T; L^2(\Omega))$.  We can therefore
  calculate in the limit $N\to \infty$
  \begin{align*}
    \int_0^T\int_\Omega |\xi^N|^2 
    &= \int_0^T\int_\Omega \xi^N\cdot \curl\, \Psi^N
    = \int_0^T\int_\Omega \curl\,\xi^N\cdot \Psi^N \to 0\,.
  \end{align*}
  In this calculation, the boundary conditions for $\xi^N$ and
  $\Psi^N$ allow the integration by parts. We have obtained the strong
  convergence \eqref {eq:2943612}.

  \medskip {\em Step 4: Item \ref{it:E2}.} The proof of Item
  \ref{it:E2} is essentially as in the case $\delta>0$, we only need
  to replace the weak convergence \eqref{eq:weak-pN} by the weak
  convergence in $H_0(\Omega,\curl)$.
\end{proof}

\section{The time-stepping scheme}
\label{sec:tds}

\subsection{Solutions to the discrete problem and estimates}

In this section we construct time-discrete approximations of the
system \eqref{eq:u}--\eqref {eq:p}. With a number $N\in \N$ of time
steps of length $\tau:=\frac{T}{N}$ we discretize the interval $[0,T]$
with
\[ 0 = t_0 < t_1 < \hdots < t_N = T,\quad t_k := k\, \tau,\quad k = 0,
\hdots,N. \] We use a variational scheme to obtain a familiy
$(p_k)_{1\leq k\leq N}$ in the state space
\begin{align*}
  \Xp\,:=\, 
  \begin{cases}
    W^{1,r}_0(\Omega,\Rnn) \quad &\text{ if }\delta>0\,,\\
    H_0(\Omega,\curl) \quad &\text{ if }\delta=0\,.
  \end{cases}
\end{align*}
The functions $p_k\in \Xp$ shall be approximations of the solution
values $p(t_k)$. For $k=0$, we use the initial data $p_0$ as the value
in $t_0 = 0$. The loads are discretized with time averages as
\begin{align}
  \label{eq:f_k}
  f_k := \frac{1}{\tau} \int_{t_{k-1}}^{t_k} f(s)\, ds \quad\text{ for
  } k=2,\dots,N,\quad\text{ and }\ f_0 := f_1 := f(0)\,.
\end{align}
We note that \eqref{ass:f} yields a constant $\Lambda_f>0$,
independent of $N$, such that
\begin{equation}
  \sum_{k=0}^{N} \tau \| f_{k} \|_{L^2(\Omega;\R^3)}^2 
  + \max_{1\leq k\leq N}\|f_k\|^2_{H^{-1}_D(\Omega;\R^3)} 
  + \sum_{k=1}^{N}\tau \Big\|\frac{f_{k}-f_{k-1}}{\tau}\Big
  \|_{H^{-1}_D(\Omega;\R^3)}^2 
  \le \Lambda_f^2\,. \label{eq:est-fN}
\end{equation}

\begin{lemma}[Existence of time-discrete
  approximations]\label{lem:existence-discrete-variational}
  For all $k=1,\ldots,N$ there exists a pair $(p_k,\Sigma_k)\in
  \Xp\times L^2(\Omega;\Rnn)$ such that
  \begin{equation}
    \calE(p_k;f_k)+\calE^*(-\Sigma_k;f_k)
    \,=\, \la -\Sigma_k , p_k\ra\,, \label{eq:td-pk}
  \end{equation}
  and
  \begin{equation}
    \calE(p_k;f_k) +
    \tau\calR\Big(\frac{p_k-p_{k-1}}{\tau}\Big)+\tau\calR^*(\Sigma_k)
    \,\leq\, \calE(p_{k-1};f_{k-1})- \la f_k-f_{k-1},u_{k-1}
    \ra \label{eq:td-energy}
  \end{equation}
  holds for any $u_{k-1}\in \frakE(p_{k-1},f_{k-1})$. Furthermore, we
  have
  \begin{align*}
    \Sigma_k
    \,\in\, \partial\calR\Big(\frac{p_k-p_{k-1}}{\tau}\Big)\,.
  \end{align*}
\end{lemma}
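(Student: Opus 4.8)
The inequality \eqref{eq:td-energy} is the familiar minimizing-movement estimate, so the plan is to obtain $p_k$ as a minimizer of the incremental functional
\[
  J_k(p)\,:=\,\calE(p;f_k)+\tau\,\calR\Big(\frac{p-p_{k-1}}{\tau}\Big)
\]
over the state space $\Xp$, and then to read off $\Sigma_k$ from the convex Euler--Lagrange relation. I would first check that $J_k$ is proper, convex, coercive and sequentially weakly lower semicontinuous on $\Xp$. It is proper (it is finite at the zero field, noting $\calW_p(0)=|\Omega|\,H_p(0)$, which vanishes in the $\delta=0$ case; in general one uses any element of $\dom(\calW_p)\cap\Xp$). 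Convexity is clear: $\calE(\cdot;f_k)=\calE_1(\cdot;f_k)+\calW_p$ is convex, since $\calE_1(\cdot;f_k)$ is the infimum over $\fhi$ of the jointly convex map $(\fhi,p)\mapsto\calW_e(\nabla\fhi,p)-\la f_k,\fhi\ra$ and $\calW_p$ is a sum of convex integrands, and $q\mapsto\calR((q-p_{k-1})/\tau)$ is convex. Coercivity on $\Xp$ follows from the lower bounds of Assumption \ref{ass:general}: the dissipation term is bounded below by $\frac c\tau\|p-p_{k-1}\|_{L^2}^2$, $\calW_p$ controls $\|\curl p\|_{L^2}^2$ and, if $\delta>0$, $\|\nabla p\|_{L^r}^r$, while $\calE_1$ is bounded below by a linear function of $\|p\|_{L^2}$ through the Lipschitz estimate of Lemma \ref{lem:marginal}; together this forces $\|p\|_{\Xp}\to\infty\Rightarrow J_k(p)\to\infty$.

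For lower semicontinuity: the dissipation term is convex and continuous on $L^2$, hence weakly lsc there and a fortiori on $\Xp$; $\calW_p$ is weakly lsc on $\Xp$ by convexity of its integrands; and weak lower semicontinuity of $p\mapsto\calE_1(p;f_k)$ is the property recorded in Lemma \ref{lem:marginal} (Item \ref{it:lem2.5-4a}). Since $\Xp$ is reflexive ($W^{1,r}_0$ with $r>1$, respectively the Hilbert space $H_0(\Omega,\curl)$), the direct method of the calculus of variations yields a minimizer $p_k\in\Xp$ with $\calE(p_k;f_k)<\infty$. For $\delta>0$ the compact embedding $W^{1,r}(\Omega)\hookrightarrow L^2(\Omega)$ (valid for $r>\tfrac65$) additionally gives strong $L^2$-convergence of a minimizing sequence, which is convenient but not essential; for $\delta=0$ one must argue with weak convergence only and rely on convexity of $\calR$ for the limit of the dissipation term.

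Next I would extract $\Sigma_k$. As $p_k$ minimizes the sum of the two proper convex lsc functionals $\calE(\cdot;f_k)$ and $q\mapsto\tau\calR((q-p_{k-1})/\tau)$, the latter being finite and continuous on $\Xp$ (because $\calR$ is finite and continuous on $L^2$ by Assumption \ref{ass:general}), the Moreau--Rockafellar sum rule applies; combined with the chain rule for the affine substitution $q\mapsto(q-p_{k-1})/\tau$ (which absorbs the factors $\tau$ and $1/\tau$) it provides $\Sigma_k$ with $-\Sigma_k\in\partial\calE(p_k;f_k)$ and $\Sigma_k\in\partial\calR\big((p_k-p_{k-1})/\tau\big)$; the quadratic bounds \eqref{eq:cond-QR-2}--\eqref{eq:cond-QR-3} on $\calR$ guarantee $\Sigma_k\in L^2(\Omega;\Rnn)$. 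The second inclusion is the last assertion of the lemma, and the first, rewritten as a Fenchel equality, is exactly \eqref{eq:td-pk}. Finally, for \eqref{eq:td-energy} I would test the subgradient inequality $-\Sigma_k\in\partial\calE(p_k;f_k)$ against the competitor $p_{k-1}$, getting $\calE(p_k;f_k)+\la\Sigma_k,p_k-p_{k-1}\ra\le\calE(p_{k-1};f_k)$; the Fenchel equality for the $\calR$-inclusion turns $\la\Sigma_k,p_k-p_{k-1}\ra$ into $\tau\calR((p_k-p_{k-1})/\tau)+\tau\calR^*(\Sigma_k)$, producing the three left-hand terms; and for any $u_{k-1}\in\frakE(p_{k-1},f_{k-1})$, using $u_{k-1}$ as a (generally non-optimal) competitor in the infimum defining $\calE_1(p_{k-1};f_k)$ gives
\[
  \calE_1(p_{k-1};f_k)\le\calW_e(\nabla u_{k-1},p_{k-1})-\la f_k,u_{k-1}\ra=\calE_1(p_{k-1};f_{k-1})-\la f_k-f_{k-1},u_{k-1}\ra ,
\]
so that adding $\calW_p(p_{k-1})$ on both sides and combining the displays yields \eqref{eq:td-energy}.

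\textbf{Main obstacle.} The delicate part is the first step: coercivity and, above all, weak lower semicontinuity of $J_k$ on $\Xp$, i.e.\ controlling and passing to the limit in the marginal energy $\calE_1$ (for which I would lean entirely on Lemma \ref{lem:marginal}), and doing this in the case $\delta=0$ \emph{without} any compact embedding of $\Xp$ into $L^2$, so that only convexity of $\calR$ is available for the dissipation term. One must also keep track that the convex duality in \eqref{eq:td-pk} is taken with respect to the $L^2$-pairing in the $p$-variable, which is consistent with the fact that the $\Sigma_k$ constructed above lies in $L^2$.
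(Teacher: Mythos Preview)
Your proposal is correct and follows essentially the same route as the paper: minimize the incremental functional $J_k=\calG_k$, invoke the Moreau--Rockafellar sum rule at the minimizer to split off $\Sigma_k\in\partial\calR((p_k-p_{k-1})/\tau)$ with $-\Sigma_k\in\partial\calE(p_k;f_k)$, rewrite the latter as the Fenchel equality \eqref{eq:td-pk}, and test the subgradient inequality at $p_{k-1}$ combined with $\calE_1(p_{k-1};f_k)\le\calE_1(p_{k-1};f_{k-1})-\la f_k-f_{k-1},u_{k-1}\ra$ (which is exactly Item~\ref{it:lem2.5-4-1} of Lemma~\ref{lem:marginal}) to obtain \eqref{eq:td-energy}. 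The only cosmetic difference is in the coercivity step: the paper uses the quadratic lower bound \eqref{eq:E1-bdd-below} for $\calE_1$ and therefore needs $\tau<c_R$, whereas your Lipschitz-based linear lower bound works for all $\tau$; both are fine since $\tau=T/N$.
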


\begin{proof}
  We define a functional $\calR_{\tau}$ by setting
  \begin{align*}
    \calR_{\tau}(q)\,:=\, \int_\Omega \tau
    R\left(\frac{q(x)}{\tau}\right)\,dx\,,
  \end{align*}
  and define $\calG_k:L^2(\Omega,\Rnn)\to \R$ by
  \begin{align}
    \calG_k(p) \,:=\, \calE(p;f_k) + \calR_{\tau}(p-p_{k-1})\quad\text{
      if } p\in \Xp\,, \label{eq:QR}
  \end{align}
  and $\calG_k(p) := +\infty$ if $p\in L^2(\Omega,\Rnn)\setminus \Xp$.
  In order to construct approximations $(p_k)_k$, $1\leq k\leq N$, we
  use the following scheme: In every time step we minimize, given
  $p_{k-1}\in \Xp$, the functional $\calG_k$.

  \smallskip {\em Step 1:} Existence of minimizers. We treat here the
  case $\delta=0$, for $\delta>0$ the proof is easily adapted. By
  Lemma \ref{lem:marginal} Items \ref{it:lem2.5-1} and
  \ref{it:lem2.5-4} (convexity and Lipschitz property of $\calE_1$)
  and Assumption \ref{ass:general} (convexity and growth condition),
  $\calG_k$ is convex and lower semi-continuous. Using the lower bound
  \eqref{eq:E1-bdd-below} for the energy together with
  \eqref{eq:est-fN} and the growth condition \eqref{eq:cond-QR-2} on
  $\calR$ we find, for any $p\in \Xp$,
  \begin{align*}
    \calG_k(p) \,&\geq\, -\|p\|_{L^2(\Omega,\Rnn)}^2 - C\Lambda_f^2 +
    \frac{1}{\tau}c_R\|p-p_{k-1}\|_{L^2(\Omega,\Rnn)}^2\,.
  \end{align*}
  This implies that $\calG_k$ is coercive for any $\tau < c_R$.  The
  direct method of the Calculus of Variations implies
  that 
  a minimizer $p_k\in Xp$ of $\calG_k$ exists.

  \smallskip {\em Step 2:} The minimizing property of $p_k$, the
  continuity of $\calR_\tau$ and subdifferential calculus for convex
  functions 
  imply that
  \begin{align*}
    0\,&\in\, \partial\calG_k(p_k)\,=\, \partial\calE(p_k;f_k)
    +\partial\calR_{\tau}(p_k-p_{k-1})\,.
  \end{align*}
  Therefore, there exists $\Sigma_k\in L^2(\Omega;\Rnn)$ with
  \begin{align}
    -\Sigma_k \,&\in\, \partial\calE(p_k;f_k)\,, \label{eq:td-pk-a}\\
    \Sigma_k \,&\in\, \partial \calR_{\tau}(p_k-p_{k-1})\,=\,
    \partial\calR\Big(\frac{p_k-p_{k-1}}{\tau}\Big)\,. \label{eq:td-pk2-a}
  \end{align}
  Subdifferential calculus provides
  that 
  \eqref{eq:td-pk-a} implies \eqref{eq:td-pk}. We now use
  \eqref{eq:td-pk2-a} and the Fenchel equality, then
  \eqref{eq:td-pk-a} and the defining property of the
  subdifferential. In the last equality, we re-order terms and add a
  zero.
  \begin{align*}
    &\tau \calR\Big(\frac{p_k-p_{k-1}}{\tau}\Big) +\tau\calR^*(\Sigma_k)
    \,=\, \tau \Big\la \Sigma_k,\frac{p_k-p_{k-1}}{\tau}\Big\ra\\
    &\qquad \leq\, \calE(p_{k-1};f_k) -\calE(p_k;f_k) \\
    &\qquad =\, \calE(p_{k-1};f_{k-1}) -\calE(p_k;f_k)+\calE(p_{k-1};f_k)
    -\calE(p_{k-1};f_{k-1})\,.
  \end{align*}
  Item \ref{it:lem2.5-4-1} of Lemma \ref{lem:marginal} yields for any
  $u_{k-1}\in \frakE(p_{k-1},f_{k-1})$
  \begin{align*}
    \tau \calR\Big(\frac{p_k-p_{k-1}}{\tau}\Big) +\tau\calR^*(\Sigma_k)
    \, \leq\, \calE(p_{k-1};f_{k-1})-\calE(p_k;f_k) - \la
    f_k-f_{k-1}, u_{k-1}\ra\,.
  \end{align*}
  This proves \eqref{eq:td-energy}.
\end{proof}

In the following lemma, we treat the case $\delta=0$ and hence $H_e\equiv 0$ and
$H_p\equiv 0$. In this setting, we deduce that the divergence of
$\Sigma_k$ is controlled. Let us provide the idea of the argument: We
have calculated in the introduction for this case $\Sigma = \sigma - 2
\curl\,\curl(p)$. Since the divergence of a curl vanishes, we can
expect $-\nabla\cdot \Sigma = -\nabla\cdot \sigma = f$, which is
controlled. We use $\calQ : L^2(\Omega;\Rnns)\to \R$, defined by
\begin{equation}\label{eq:calQ}
  \calQ(e)\,:=\, \int_\Omega Q(e)\,dx\,.
\end{equation}
Since we assume $H_e\equiv 0$ the elastic energy of an arbitrary
deformation $\fhi\in H^1_D(\Omega,\R^3)$ and of an arbitrary plastic
contribution $q\in L^2(\Omega,\Rnn)$ is given by
\begin{equation*}
  \calW_e(\nabla \varphi,q)\,=\,\calQ(\sym(\nabla \varphi - q))\,=\,
  \int_\Omega Q(\sym(\nabla \varphi - q))\,dx\,.
\end{equation*}

\begin{lemma}\label{lem:H_K=0}
  Assume $\delta = 0$ with $H_e\equiv 0$ and $H_p\equiv 0$. Then, for
  any $1\leq k\leq N$, there holds 
  \begin{align}
    -\nabla \cdot \Sigma_k \,&=\, f_k
    \,. \label{eq:div-Sigma-k}
  \end{align}
\end{lemma}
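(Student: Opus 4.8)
The plan is to extract an Euler--Lagrange equation from the minimality of $p_k$ and read off the divergence condition by testing with gradient fields. First I would recall from Lemma~\ref{lem:existence-discrete-variational} that $p_k$ minimizes $\calG_k$ over $\Xp = H_0(\Omega,\curl)$, so in particular $-\Sigma_k \in \partial\calE(p_k;f_k)$ with $\calE(p;f_k) = \calE_1(p;f_k) + \calW_p(p)$ and (since $\delta=0$, $H_p\equiv 0$) $\calW_p(p) = \int_\Omega |\curl\, p|^2$. The key point is that the marginal energy $\calE_1(\cdot;f_k)$ interacts trivially with gradient perturbations: if $u_{k-1}\in\frakE(p_{k-1},f_{k-1})$ is a minimizing displacement for $p_{k-1}$, then for any $\psi\in H^1_D(\Omega;\R^3)$ the deformation $u_{k}+\nabla$-shift absorbs the perturbation. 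Concretely, for $q\in\Xp$ and $t\in\R$ consider the competitor $p_k + t\,\nabla\psi$; since $\curl\nabla\psi = 0$ the term $\calW_p$ is unchanged, and since $W_e(\nabla\varphi,q) = Q(\sym(\nabla\varphi - q))$ depends on $\nabla\varphi - q$ only, we have $\calE_1(p_k + t\nabla\psi; f_k) = \calE_1(p_k; f_k) - t\la f_k,\psi\ra$ (shift the minimizing $\varphi$ by $t\psi$, which stays in $H^1_D$, and note the linear term $\la f_k,\varphi+t\psi\ra$ picks up exactly $t\la f_k,\psi\ra$).

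Given this, the map $t\mapsto \calG_k(p_k + t\nabla\psi)$ is differentiable at $t=0$ (the $\calR_\tau$ part is smooth by differentiability of $R$ under the quadratic growth, or one uses the subdifferential inclusion directly), and its derivative must vanish. Combining with $-\Sigma_k\in\partial\calE(p_k;f_k)$ and the variational inequality $\calE(p_k + t\nabla\psi;f_k)\ge \calE(p_k;f_k) + \la -\Sigma_k, t\nabla\psi\ra$, one gets $\la -\Sigma_k,\nabla\psi\ra \le \calE(p_k+t\nabla\psi;f_k)-\calE(p_k;f_k) = -t\la f_k,\psi\ra$ for all $t$, hence $\la -\Sigma_k,\nabla\psi\ra = -\la f_k,\psi\ra$ for every $\psi\in H^1_D(\Omega;\R^3)$, i.e. $\la \Sigma_k,\nabla\psi\ra = \la f_k,\psi\ra$. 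By definition of the distributional divergence (and the weak boundary condition encoded in using $H^1_D$ test functions), this is exactly $-\nabla\cdot\Sigma_k = f_k$, which is \eqref{eq:div-Sigma-k}.

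The main obstacle I anticipate is the bookkeeping around the marginal energy rather than any deep estimate: one must check carefully that $\nabla\psi\in\Xp = H_0(\Omega,\curl)$ (true, since $\curl\nabla\psi = 0$ and gradients of $H^1_D$ functions have the right tangential trace relative to the boundary setup), that the infimum defining $\calE_1$ really shifts as claimed (a one-line change of variables $\varphi\mapsto\varphi - t\psi$ in \eqref{eq:reduced-energy-1}, using $\frakE\neq\emptyset$ from Lemma~\ref{lem:marginal}), and that one may differentiate $\calR_\tau$ or otherwise handle the dissipation term when passing from the minimality of $\calG_k$ to the statement about $\partial\calE$ alone — but this last point is already packaged in the inclusion $-\Sigma_k\in\partial\calE(p_k;f_k)$ proved in Lemma~\ref{lem:existence-discrete-variational}, so testing that subdifferential inclusion with $\pm\nabla\psi$ directly is the cleanest route and sidesteps the dissipation entirely.
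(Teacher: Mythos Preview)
Your proposal is correct and follows essentially the same approach as the paper. The paper introduces $\eta := -\Sigma_k - 2\curl\curl(p_k)$ and uses subdifferential calculus to obtain $\eta\in\partial\calE_1(p_k;f_k)$ before testing with $\nabla\psi$, whereas you work directly with $-\Sigma_k\in\partial\calE(p_k;f_k)$ and use that $\calW_p(p_k+\nabla\psi)=\calW_p(p_k)$; both routes rest on the identical key computation $\calE_1(p_k+\nabla\psi;f_k)=\calE_1(p_k;f_k)-\la f_k,\psi\ra$ via the change of variables $\varphi\mapsto\varphi-\psi$ in \eqref{eq:reduced-energy-1}, so the difference is purely cosmetic.
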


\begin{proof}
  We analyze the distribution $\eta:= -\Sigma_k - 2 \curl\curl (p_k)$.
  From \eqref{eq:td-pk} we know $-\Sigma_k \in \del \calE(p_k; f_k)$.
  Subdifferential calculus (e.g.\,\cite[Theorem 9.5.4]{AtBM14}) yields
  $\eta\in \partial \calE_1(p_k;f_k)$. The definition of the
  subdifferential yields
  \begin{equation}\label{eq:82362394}
    \calE_1(p_k+\nabla\psi;f_k) \,\geq\, \calE_1(p_k;f_k) 
    + \la \eta,\nabla\psi\ra
  \end{equation}
  for all $\psi\in H^1_D(\Omega,\R^3)$.  We can evaluate the left hand
  side, arguing with $\tilde\fhi = \fhi - \psi$,
  \begin{align*}
    \calE_1(p_k+\nabla\psi;f_k)\,&=\, \inf_{\varphi\in
      H^1_D(\Omega,\R^3)}
    \Big( \calQ(\sym(\nabla\varphi - p_k - \nabla\psi)) 
    -\la \varphi,f_k\ra \Big)\\
    &= \inf_{\tilde\varphi\in H^1_D(\Omega,\R^3)} \Big(
    \calQ(\sym(\nabla\tilde\varphi-p_k)) - \la \tilde\varphi,f_k\ra \Big)
    -\la \psi,f_k\ra\\
    &= \calE_1(p_k;f_k)-\la \psi,f_k\ra\,.
  \end{align*}
  Inserting into \eqref {eq:82362394} yields
  \begin{align*}
    0 \,\geq\, \la \eta,\nabla\psi\ra + \la \psi,f_k\ra \quad\text{ for
      all }\psi\in H^1_D(\Omega,\R^3)\,,
  \end{align*}
  and hence $-\nabla\cdot \eta + f_k =0$. By definition of $\eta$, and
  since the divergence of a curl vanishes, this yields the claim of
  \eqref{eq:div-Sigma-k}.
\end{proof}

\begin{lemma}[A priori bounds for the time-discrete
  solutions]\label{lem:timestep-apriori}
  Let the load $f$ satisfy \eqref{ass:f} and let $(f_k)_k$ be defined
  by \eqref {eq:f_k} such that \eqref {eq:est-fN} is satisfied. Then
  there exists a constant $C = C(\Lambda_f,
  \|p_0\|_{L^2(\Omega,\Rnn)})$, independent of $N$, such that the
  sequence of time-discrete solutions satisfies the a priori estimate
  \begin{equation}\label{eq:timestep-apriori}
    \max_{k} \calW(\nabla u_k,p_k)
    + \sum_k\tau \left\{ \calR\Big(\frac{p_k - p_{k-1}}{\tau}\Big) 
    + \calR^*(\Sigma_k)\right\}
    \le C\,,
  \end{equation}
  where $u_k\in\frakE(p_k,f_k)$, $1\leq k\leq N$, are chosen
  arbitrarily.  In particular, we have
  \begin{align}
     \max_{1\leq k\leq N} \|p_k\|_{H_0(\Omega,\curl)}^2 
     + \max_{1\leq k\leq N} \int_\Omega \delta |\nabla p_k|^r 
     + \sum_{1\leq k\leq N}\tau\int_\Omega \left\{
     \Big|\frac{p_k-p_{k-1}}{\tau}\Big|^2 +|\Sigma_k|^2\right\} \,\leq\,
    C\,. \label{eq:td-estimates}
  \end{align}
\end{lemma}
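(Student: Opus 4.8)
The plan is to derive \eqref{eq:timestep-apriori} by iterating the time-discrete energy inequality \eqref{eq:td-energy} and then estimating the work-type term on the right-hand side using the bounds on $f$ collected in \eqref{eq:est-fN}. First I would observe that, since the energy $\calE$ is bounded below by the marginal-energy lower bound (in the excerpt referred to as \eqref{eq:E1-bdd-below}) and the dissipation terms $\calR(\cdot)$, $\calR^*(\cdot)$ are nonnegative, it suffices to control $\calE(p_k;f_k)$ together with $\sum_j \tau[\calR(\tfrac{p_j-p_{j-1}}{\tau}) + \calR^*(\Sigma_j)]$, and then translate this into \eqref{eq:td-estimates} via the quadratic growth of $Q, R, R^*$ and the coercivity of $\calW_p$ in $H_0(\Omega,\curl)\cap W^{1,r}$. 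Summing \eqref{eq:td-energy} over $k=1,\dots,m$ telescopes the energy differences and gives
\begin{align*}
  \calE(p_m;f_m) + \sum_{k=1}^m \tau\Big\{\calR\Big(\tfrac{p_k-p_{k-1}}{\tau}\Big) + \calR^*(\Sigma_k)\Big\}
  \le \calE(p_0;f_0) + \sum_{k=1}^m \big(\calE(p_{k-1};f_k) - \calE(p_{k-1};f_{k-1})\big) - \sum_{k=1}^m \la f_k - f_{k-1}, u_{k-1}\ra\,,
\end{align*}
where I have written the telescoping slightly loosely; the point is that after using \eqref{eq:td-energy} directly (which already has the $-\la f_k-f_{k-1},u_{k-1}\ra$ term on the right) the only genuinely new object is the accumulated boundary work $\sum_k \la f_k - f_{k-1}, u_{k-1}\ra$.

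The key step is then to estimate this accumulated work term. I would bound $|\la f_k - f_{k-1}, u_{k-1}\ra| \le \|f_k - f_{k-1}\|_{H^{-1}_D}\,\|u_{k-1}\|_{H^1_D}$, and control $\|u_{k-1}\|_{H^1_D}$ by the $H^1$-estimate for minimizers of the marginal problem (Item \ref{it:lem2.5-3} of Lemma \ref{lem:marginal}), which gives $\|u_{k-1}\|_{H^1_D} \le C(1 + \|p_{k-1}\|_{L^2} + \|f_{k-1}\|_{H^{-1}_D})$. Using $\|p_{k-1}\|_{L^2}^2 \le C\,\calW(\nabla u_{k-1},p_{k-1}) + C$ (from coercivity of $\calW_p$, or equivalently of $\calE$) and the bound $\max_k\|f_k\|_{H^{-1}_D} + \sum_k\tau\|\tfrac{f_k-f_{k-1}}{\tau}\|_{H^{-1}_D}^2 \le \Lambda_f^2$ from \eqref{eq:est-fN}, a Young/Cauchy–Schwarz split writes
\begin{align*}
  \sum_{k=1}^m |\la f_k - f_{k-1}, u_{k-1}\ra| \le \frac12 \sum_{k=1}^m \tau\Big\|\tfrac{f_k-f_{k-1}}{\tau}\Big\|_{H^{-1}_D}^2 + \frac12 \sum_{k=1}^m \tau\,\|u_{k-1}\|_{H^1_D}^2 \le C\Lambda_f^2 + C\sum_{k=1}^m \tau\,\big(1 + \calW(\nabla u_{k-1},p_{k-1})\big)\,.
\end{align*}
Feeding this back into the summed inequality yields, for $E_m := \calW(\nabla u_m,p_m) = \calE(p_m;f_m) + \la f_m, u_m\ra$ (up to lower-order terms making everything nonnegative), a discrete Gronwall inequality $E_m \le C + C\sum_{k=0}^{m-1}\tau\,E_k$, from which $\max_m E_m \le C\,e^{CT}$ follows, independently of $N$. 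The dissipation sum is then bounded by the same constant since it sits on the left-hand side with a good sign.

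The main obstacle I anticipate is the bookkeeping around the time dependence of the energy through $f$: the term $\calE(p_{k-1};f_k) - \calE(p_{k-1};f_{k-1})$ must be handled carefully, and the cleanest route is to use exactly the identity from Item \ref{it:lem2.5-4-1} of Lemma \ref{lem:marginal} — which is already invoked in the proof of Lemma \ref{lem:existence-discrete-variational} to pass from the intermediate form of \eqref{eq:td-energy} to its final form — so that \eqref{eq:td-energy} as stated is the right starting point and no separate estimate of that difference is needed. A secondary technical point is that $u_k\in\frakE(p_k,f_k)$ is only determined up to the kernel of $\nabla^s$ restricted by the Dirichlet condition on $\Gamma_D$; since $\Gamma_D$ is nonempty and relatively open, Korn's inequality makes $\|\cdot\|_{H^1_D}$ genuinely controlled by $\|\nabla^s\cdot\|_{L^2}$, so the minimizer bound from Lemma \ref{lem:marginal} is legitimate and the choice of $u_k$ in \eqref{eq:timestep-apriori} is immaterial. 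Once \eqref{eq:timestep-apriori} is established, \eqref{eq:td-estimates} is immediate: the lower quadratic bounds \eqref{eq:cond-QR-2}–\eqref{eq:cond-QR-3} convert the $\calR$- and $\calR^*$-sums into the $L^2$ sums for $\tfrac{p_k-p_{k-1}}{\tau}$ and $\Sigma_k$, and the definition of $\calW_p$ in \eqref{eq:W-sub-p} gives the $\|p_k\|_{H_0(\Omega,\curl)}^2$ and $\delta\int|\nabla p_k|^r$ bounds directly from $\max_k \calW(\nabla u_k,p_k) \le C$.
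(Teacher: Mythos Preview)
Your proposal has a genuine gap at the step where you claim $\|p_{k-1}\|_{L^2}^2 \le C\,\calW(\nabla u_{k-1},p_{k-1}) + C$ ``from coercivity of $\calW_p$, or equivalently of $\calE$''. Neither functional is coercive in $\|p\|_{L^2}$ under the assumptions of the paper. In the case $\delta=0$, $H_p\equiv 0$ (precisely the setting of Theorem~\ref{thm:main-eq}), one has $\calW_p(p)=\int_\Omega|\curl p|^2$, which vanishes on gradients and therefore cannot control $\|p\|_{L^2}$. The elastic part $\calW_e$ only controls $\|\sym(\nabla u - p)\|_{L^2}$, and the lower bound \eqref{eq:E1-bdd-below} for $\calE_1$ carries a \emph{negative} multiple of $\|p\|_{L^2}^2$. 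So your discrete Gronwall inequality in $E_k=\calW(\nabla u_k,p_k)$ does not close. The same missing control reappears in your final paragraph: the $\|p_k\|_{H_0(\Omega,\curl)}^2$ part of \eqref{eq:td-estimates} contains $\|p_k\|_{L^2}^2$, which is \emph{not} delivered ``directly'' by $\max_k\calW(\nabla u_k,p_k)\le C$.

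The paper closes the estimate differently: instead of trying to bound $\|p_k\|_{L^2}$ by the energy, it bounds it by the \emph{dissipation}. Telescoping $\|p_k\|_{L^2}\le \|p_0\|_{L^2}+\sum_{j\le k}\|p_j-p_{j-1}\|_{L^2}$, squaring, and using the lower quadratic bound on $R$ gives $\|p_k\|_{L^2}^2 \le C(T,c_R)\sum_{j=1}^N \tau\,\calR\big(\tfrac{p_j-p_{j-1}}{\tau}\big) + 2\|p_0\|_{L^2}^2$. This is the missing idea. With this in hand, the work term is estimated as you do, but with a free parameter $\lambda>0$: one obtains (schematically) $\max_k\calW(\nabla u_k,p_k)+\sum_k\tau[\calR+\calR^*] \le C + C\lambda\sum_k\tau\calR(\cdot)$, and then simply chooses $\lambda$ small to absorb on the left---no Gronwall is needed. (Incidentally, your displayed telescoped inequality contains a spurious extra term $\sum_k(\calE(p_{k-1};f_k)-\calE(p_{k-1};f_{k-1}))$; summing \eqref{eq:td-energy} directly gives only $\calE(p_0;f_0)-\sum_k\la f_k-f_{k-1},u_{k-1}\ra$ on the right, as you effectively acknowledge.)
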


\begin{proof} We choose $1\leq k_0\leq N$ arbitrary and take the sum
  of \eqref{eq:td-energy} over $k=1, \dots ,{k_0}$. We obtain
  \begin{equation}\label{eq:4.2-1}
    \begin{split}
      &\calE(p_{k_0};f_{k_0})
      + \sum_{k=1}^{k_0}\tau \left\{ \calR\Big(\frac{p_k-p_{k-1}}{\tau}\Big)
        +\calR^*(\Sigma_k) \right\}\\
      &\qquad \leq\, \calE(p_{0};f_{0}) -\sum_{k=1}^{k_0}\int_\Omega
      u_{k-1} \cdot (f_k-f_{k-1})\,,
    \end{split}
  \end{equation}
  where the functions $u_{k-1}\in \frakE(p_{k-1},f_{k-1})$,
  $k=1,\dots,k_0$, are arbitrary.

  We estimate the second term on the right-hand side of
  \eqref{eq:4.2-1}. By our choice $f_0=f_1$, using the bound
  \eqref{eq:est-fN} on $(f_k)_k$, we obtain for arbitrary $\lambda>0$
  \begin{align}
    &\Big|\sum_{k=1}^{{k_0}}\int_\Omega u_{k-1}\cdot (f_{k}-f_{k-1})\,dx \Big|\notag\\
    &\quad \leq\,  
    \Big(\sum_{k=2}^{k_0}\tau 
    \Big\|\frac{f_{k}-f_{k-1}}{\tau}\Big\|_{H^{-1}_D(\Omega;\R^3)}^2\Big)^{\frac{1}{2}}
    \Big(\sum_{k=2}^{{k_0}}\tau \|u_{k-1}\|_{H^1(\Omega;\R^3)}^2\Big)^{\frac{1}{2}}\notag\\
    &\quad \leq\, \frac{\Lambda_f^2}{4\lambda} 
    +\lambda\, k_0\, \tau 
    \max_{1\leq k\leq {k_0}-1} \|u_k\|_{H^1(\Omega;\R^3)}^2 \notag\\
    &\quad \leq\, \lambda T \max_{1\leq k\leq k_0-1}
    \|p_k\|_{L^2(\Omega;\Rnn)}^2 +
    C(T,\lambda,\Lambda_f)\,, \label{eq:4.2-1a}
  \end{align}
  where we have used the estimate for $u_k$ from \eqref{eq:est-minE1}
  in the last step. We can now obtain an $N$-independent bound for
  energies. We use \eqref{eq:E1-bdd-below} in the first inequality,
  then \eqref{eq:4.2-1} together with \eqref{eq:4.2-1a}, with the
  parameter $\lambda>0$ unchanged,
  \begin{align*}
    &\frac{1}{2}\calW_e(\nabla u_{k_0},p_{k_0}) + \calW_p(p_{k_0}) 
    + \sum_{k=1}^{k_0}\tau\left\{\calR\Big(\frac{p_k-p_{k-1}}{\tau}\Big)
    +\calR^*(\Sigma_k) \right\} \notag\\
    &\quad \leq\, \calE(p_{k_0};f_{k_0}) 
    + \sum_{k=1}^{k_0}\tau \left\{ \calR\Big(\frac{p_k-p_{k-1}}{\tau}\Big)
    +\calR^*(\Sigma_k) \right\} + \lambda \|p_{k_0}\|^2 
    + C  \notag\\
    &\quad \leq\,  \calE(p_{0};f_0) +  \lambda (T+1) \max_{1\leq k\leq k_0}
    \|p_k\|_{L^2(\Omega;\Rnn)}^2 + C\,,
  \end{align*}
  where $C = C(T,c_Q,\lambda,\Omega,\Gamma_D,\Lambda_f)$.  This
  implies
  \begin{align}
    &\max_{1\leq k\leq N}\calW(\nabla u_k,p_k) +
    \sum_{k=1}^{N}\tau\left\{ \calR\Big(\frac{p_k-p_{k-1}}{\tau}\Big)
    +\calR^*(\Sigma_k) \right\} \notag\\
    &\quad \leq\, 3\calE(p_{0};f_0) + 3\lambda (T+1) \max_{1\leq k\leq
      N} \|p_k\|_{L^2(\Omega;\Rnn)}^2 +  C\,. \label{eq:4.2-2}
  \end{align}
	
  Our next aim is to find bounds for $(p_k)_k$. We consider an
  arbitrary $k\leq N$ and start with the elementary triangle
  inequality
  \begin{equation*}
    \|p_{k}\|_{L^2(\Omega;\Rnn)} \,\leq\,
    \sum_{j=1}^{k}\|p_{j}-p_{j-1}\|_{L^2(\Omega;\Rnn)} +
    \|p_0\|_{L^2(\Omega;\Rnn)}\,.
  \end{equation*}
  Taking the square and using the coercivity \eqref{eq:cond-QR-2} of
  $\calR$, we obtain
  \begin{align}
    \|p_{k}\|_{L^2(\Omega;\Rnn)}^2 \,&\leq\, 
    2k\sum_{j=1}^{k}\|p_{j}-p_{j-1}\|_{L^2(\Omega;\Rnn)}^2 
    + 2\|p_0\|_{L^2(\Omega;\Rnn)}^2 \notag\\
    &\leq\, \frac{2k\tau^2}{c_R} \sum_{j=1}^{k}
    \calR\Big(\frac{p_j-p_{j-1}}{\tau}\Big) + 2\|p_0\|_{L^2(\Omega;\Rnn)}^2 
    \notag\displaybreak[2]\\
    &\leq\,
    C(T,c_R)\sum_{j=1}^{N}\tau\calR\Big(\frac{p_j-p_{j-1}}{\tau}\Big)
    + 2\|p_0\|_{L^2(\Omega;\Rnn)}^2\,. \label{eq:4.2-3}
  \end{align}
  Hence, by choosing $\lambda>0$ sufficiently small in
  \eqref{eq:4.2-2}, we obtain
  \begin{equation}
    \max_{1\leq k\leq N}\calW(\nabla u_k,p_k) 
    + \frac{1}{2}\sum_{k=1}^{N}\tau\left\{ \calR\Big(\frac{p_k-p_{k-1}}{\tau}\Big)
    +\calR^*(\Sigma_k) \right\} \,  \leq\, C\,, \label{eq:4.2-7}
  \end{equation}
  with $C = C(T,c_R,c_Q,\Omega,\Gamma_D,\Lambda_f,p_0)$, and hence
  \eqref {eq:timestep-apriori}.  Using \eqref{eq:cond-QR-2} and
  \eqref{eq:4.2-3} we deduce \eqref{eq:td-estimates}.
\end{proof}

\subsection{The time-continuous limit}

The next step in our construction is to introduce interpolations of
the time-discrete approximate solutions.  For $N\in\N$, we have
constructed time-discrete values $p_k = p_k^N \in X$ and $\Sigma_k =
\Sigma_k^N \in L^2(\Omega,\Rnn)$, see Lemma \ref
{lem:existence-discrete-variational}. We define the piecewise constant
and left-continuous interpolation $\bar p^N : [0,T] \to X$ and the
piecewise affine and continuous interpolation $\hat p^N : [0,T] \to X$
by
\begin{align*}
  \bar p^N(t) &:= p_k\quad \quad\text{ for } 
  t\in (t_{k-1},t_{k}],1\leq k\leq N,\qquad \quad \bar p^N(0):=p_0\\
  \hat p^N(t) &:= (1-\mu) p_{k-1} + \mu p_{k}\quad\text{ for } t=
  (1-\mu) t_{k-1} + \mu t_{k},\ \mu \in [0,1],1\leq k\leq N\,.
\end{align*}
Similarly, we define $\bar \Sigma^N$ and $\bar f^N$ as the piecewise
constant left-continuous interpolations of $(\Sigma_k^N)_k$ and
$(f_k^N)_k$. The function $\hat f^N$ is defined slightly differently,
namely with a time-shift (we set $f_{N+1}:=f_N$):
\begin{align*}
  \hat f^N(t) &:= (1-\mu) f_{k} + \mu f_{k+1}\quad\text{ for } t= 
  (1-\mu) t_{k-1} + \mu  t_{k},\ \mu \in[0,1],1\leq k\leq N\,.
\end{align*}

By the previous results we easily obtain a priori bounds for $\bar
p^N$, $\bar \Sigma^N$, $\partial_t \hat p^N$.
\begin{lemma}[Estimates for the
  interpolations]\label{lem:estimates-discrete}
  Under Assumption \ref{ass:general} there exists $C>0$ independent of
  $N$ such that the piecewise constant functions satisfy
  \begin{equation}
    \| \bar p^N \|_{L^\infty(0,T; H_0(\Omega,\curl))}+ \| \bar
    \Sigma^N \|_{L^2(0,T; L^2(\Omega;\Rnn))} +
    \sqrt[r]{\delta}\|\nabla\bar
    p^N\|_{L^\infty(0,T;L^{r}(\Omega;\Rnn))} \le
    C\,.  \label{eq:aprioritimediscrete}
  \end{equation}
  The piecewise affine functions satisfy
  \begin{equation}
    \| \hat p^N \|_{H^1(0,T; L^2(\Omega;\Rnn))} + \| \hat p^N
    \|_{L^\infty(0,T; H_0(\Omega,\curl))}+
    \sqrt[r]{\delta}\|\nabla\hat
    p^N\|_{L^\infty(0,T;L^{r}(\Omega;\Rnn))}\,\leq\,
    C\,.  \label{eq:aprioritimediscrete2}
  \end{equation}
  We have the time-shift property
  \begin{equation}
    \sup_N \left\| \bar p^N(. + \rho) - \bar p^N(.) 
    \right\|^2_{L^2(0,T-\rho;L^2(\Omega;\Rnn))} \to 0\quad \text{
      as }\quad \rho \to 0\,. \label{eq:shifts-time-compactness-sec3}
  \end{equation}  
  Finally, in the case $\delta=0$, for $H_e\equiv 0$ and $H_p\equiv
  0$, with $\Lambda_f$ from \eqref {eq:est-fN}:
  \begin{equation}\label{eq:div-Sigma-est}
    \|\nabla \cdot
    \bar\Sigma^N\|_{L^2(0,T; L^2(\Omega,\R^3))}\,\leq\, \Lambda_f\,.
  \end{equation}
\end{lemma}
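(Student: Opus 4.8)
The strategy is to read off all four claims from the $N$-independent a priori bounds \eqref{eq:td-estimates} of Lemma~\ref{lem:timestep-apriori}, from the load estimate \eqref{eq:est-fN}, and, for \eqref{eq:div-Sigma-est}, from the identity \eqref{eq:div-Sigma-k} of Lemma~\ref{lem:H_K=0}. For \eqref{eq:aprioritimediscrete}: since $\bar p^N\equiv p_k$ and $\bar\Sigma^N\equiv\Sigma_k$ on $(t_{k-1},t_k]$, one has $\|\bar p^N\|_{L^\infty(0,T;H_0(\Omega,\curl))}=\max_{k}\|p_k\|_{H_0(\Omega,\curl)}$, $\sqrt[r]{\delta}\,\|\nabla\bar p^N\|_{L^\infty(0,T;L^r(\Omega))}=\max_k\big(\int_\Omega\delta|\nabla p_k|^r\big)^{1/r}$, and $\|\bar\Sigma^N\|^2_{L^2(0,T;L^2)}=\sum_k\tau\|\Sigma_k\|^2_{L^2}$, all controlled by \eqref{eq:td-estimates}. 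For \eqref{eq:aprioritimediscrete2}: $\hat p^N(t)$ is a convex combination of $p_{k-1}$ and $p_k$, so its $H_0(\Omega,\curl)$- and $W^{1,r}$-seminorms and its $L^2(\Omega)$-norm are dominated by the corresponding maxima over $k$ (whence also $\|\hat p^N\|_{L^2(0,T;L^2)}\le\sqrt T\,\max_k\|p_k\|_{L^2}$), while $\del_t\hat p^N\equiv(p_k-p_{k-1})/\tau$ on $(t_{k-1},t_k)$ gives $\|\del_t\hat p^N\|^2_{L^2(0,T;L^2)}=\sum_k\tau\|(p_k-p_{k-1})/\tau\|^2_{L^2}\le C$. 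For \eqref{eq:div-Sigma-est}: by Lemma~\ref{lem:H_K=0}, $-\nabla\cdot\bar\Sigma^N=-\nabla\cdot\Sigma_k=f_k=\bar f^N$ on $(t_{k-1},t_k]$, hence $\|\nabla\cdot\bar\Sigma^N\|^2_{L^2(0,T;L^2)}=\sum_k\tau\|f_k\|^2_{L^2}\le\Lambda_f^2$ by \eqref{eq:est-fN}.

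The time-shift property \eqref{eq:shifts-time-compactness-sec3} needs a little more care because it must hold uniformly in $N$. I would first record that on $(t_{k-1},t_k]$, with $\mu=(t-t_{k-1})/\tau$, one has $\bar p^N-\hat p^N=(1-\mu)(p_k-p_{k-1})$, so that $\|\bar p^N-\hat p^N\|^2_{L^2(0,T;L^2)}=\tfrac13\sum_k\tau\|p_k-p_{k-1}\|^2_{L^2}=\tfrac{\tau^2}{3}\sum_k\tau\|(p_k-p_{k-1})/\tau\|^2_{L^2}\le\tfrac{\tau^2}{3}C$. Then I split into two regimes. If $\rho\ge\tau$, insert $\hat p^N$: using $\|\hat p^N(\cdot+\rho)-\hat p^N(\cdot)\|_{L^2(0,T-\rho;L^2)}\le\rho\|\del_t\hat p^N\|_{L^2(0,T;L^2)}$ and the bound from \eqref{eq:aprioritimediscrete2} one gets $\|\bar p^N(\cdot+\rho)-\bar p^N(\cdot)\|_{L^2(0,T-\rho;L^2)}\le 2\|\bar p^N-\hat p^N\|_{L^2}+\rho\,C^{1/2}\le(\tfrac{2}{\sqrt3}+1)C^{1/2}\rho$, using $\tau\le\rho$. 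If $\rho<\tau$, I use instead that $\bar p^N$ is piecewise constant: $\bar p^N(\cdot+\rho)-\bar p^N(\cdot)$ is supported on the disjoint intervals $(t_k-\rho,t_k]$ and equals $p_{k+1}-p_k$ on each of them, whence $\|\bar p^N(\cdot+\rho)-\bar p^N(\cdot)\|^2_{L^2(0,T-\rho;L^2)}\le\rho\sum_k\|p_{k+1}-p_k\|^2_{L^2}=\rho\tau\sum_k\tau\|(p_{k+1}-p_k)/\tau\|^2_{L^2}\le\rho\,T\,C$, using $\tau\le T$. In both regimes the bound is $O(\rho+\sqrt\rho)$ with an $N$-independent constant, and \eqref{eq:shifts-time-compactness-sec3} follows.

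The only genuine obstacle is this last point: the crude comparison $\|\bar p^N-\hat p^N\|_{L^2(0,T;L^2)}\le(\tau^2C/3)^{1/2}$ is not uniformly small when $N$ is small (then $\tau$ can be as large as $T$), so the case $\rho<\tau$ must be treated through the piecewise-constant structure rather than through the affine interpolant. All remaining estimates are immediate rewritings of \eqref{eq:td-estimates}, \eqref{eq:est-fN}, and Lemma~\ref{lem:H_K=0}.
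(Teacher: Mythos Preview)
Your proposal is correct. The bounds \eqref{eq:aprioritimediscrete}, \eqref{eq:aprioritimediscrete2}, and \eqref{eq:div-Sigma-est} are obtained exactly as in the paper, by rewriting the discrete estimates \eqref{eq:td-estimates}, \eqref{eq:est-fN}, and \eqref{eq:div-Sigma-k} in terms of the interpolants.

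For the time-shift property \eqref{eq:shifts-time-compactness-sec3} you take a somewhat different route than the paper. The paper argues purely combinatorially: it first computes the shift estimate for integer multiples $\rho=j\tau$ directly from \eqref{eq:td-estimates} (obtaining a bound $C\rho^2$), and then reduces an arbitrary $\rho\in(\tau_j,\tau_{j+1}]$ to the integer case by the pointwise observation that $\bar p^N(t+\rho)$ coincides with one of $\bar p^N(t+\tau_j)$ or $\bar p^N(t+\tau_{j+1})$. You instead compare $\bar p^N$ with the affine interpolant $\hat p^N$ in the regime $\rho\ge\tau$ (exploiting the $H^1(0,T;L^2)$-bound on $\hat p^N$ and the smallness $\|\bar p^N-\hat p^N\|_{L^2(0,T;L^2)}=O(\tau)$), and handle $\rho<\tau$ by a direct support computation. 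Your approach has the advantage of making the case $\rho<\tau$ explicit, which in the paper's presentation is implicitly absorbed into the integer-shift case; the paper's approach avoids introducing $\hat p^N$ and is slightly more self-contained. Both yield an $N$-independent rate $O(\rho)$ (respectively $O(\sqrt{\rho})$ in your second regime), which is all that is needed.
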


\begin{proof}
  Lemma \ref{lem:timestep-apriori} provides the estimates \eqref
  {eq:aprioritimediscrete} and \eqref {eq:aprioritimediscrete2} for
  $\bar p^N$, $\bar \Sigma^N$, and $\hat p^N$.  Lemma \ref{lem:H_K=0}
  provides \eqref {eq:div-Sigma-est} for $\nabla \cdot \bar\Sigma^N$.
  
  To prove \eqref{eq:shifts-time-compactness-sec3} we first calculate
  for $\rho=\tau_j:=j\tau$, $j\in \N$,
  \begin{align*}
    \int_0^{T-\rho}\|\bar p^N(t+\rho)-\bar
    p^N(t)\|_{L^2(\Omega)}^2\,dt\,&=\,
    \sum_{k=1}^{N-j}\tau\int_\Omega | p_{k+j} - p_k |^2\\
    = \sum_{k=1}^{N-j}\tau\int_\Omega \Big|\sum_{i=k+1}^{k+j}(p_i-p_{i-1})\Big|^2
    &\leq\, \sum_{k=1}^{N-j}\tau j \sum_{i=k+1}^{k+j} \tau^2 \int_\Omega
    \Big|\frac{p_i-p_{i-1}}{\tau}\Big|^2\, \leq\, C \rho^2\,,
  \end{align*}
  where we used the a priori estimate \eqref{eq:td-estimates} in the
  last step.

  We now consider an arbitrary shift $\rho \in (\tau_j, \tau_{j+1}]$,
  $0< j\in \N$. For $t$ lying in an interval $(t_k,t_{k+1}]$, the
  number $t +\rho$ lies either in the interval $(t_{k+j},t_{k+j+1}]$
  or in the interval $(t_{k+j+1},t_{k+j+2}]$.  We hence obtain with a
  triangle inequality
  \begin{equation*}
    | \bar p^N(t + \rho) - \bar p^N(t) | \le
    | \bar p^N(t + \tau_{j+1}) - \bar p^N(t) | +
    | \bar p^N(t + \tau_{j+2}) - \bar p^N(t) | \,.
  \end{equation*}
  The above inequalities yield \eqref
  {eq:shifts-time-compactness-sec3} for $\bar p^N$.
\end{proof}

Lemma \ref{lem:estimates-discrete} implies some compactness
properties.

\begin{lemma}[Convergence of time-discrete approximations]\label{lem:tds-cpct}
  There exists a subsequence $N\to\infty$ (not relabeled) and
  functions $p\in L^\infty(0,T;H_0(\Omega,\curl))\cap
  H^1(0,T;L^2(\Omega;\Rnn))$ and $\Sigma\in L^2(0,T; L^2(\Omega;\Rnn))$ such
  that
  \begin{align}
    \bar p^N\,&\weakstar\, p\quad\text{ in }L^\infty(0,T;H_0(\Omega,\curl))\,, 
    \label{eq:conv-bar_pN}\\
    \hat p^N\,&\weakstar\, p\quad\text{ in }L^\infty(0,T;H_0(\Omega,\curl))\,, 
    \label{eq:conv-hat_pN}\\
    \partial_t\hat p^N\,&\weakto\, \partial_tp\quad\text{ in }L^2(0,T;L^2(\Omega,\Rnn))\,, 
    \label{eq:conv-dt_pN}\\
    \bar \Sigma^N\,&\weakto\, \Sigma\quad\text{ in
    }L^2(0,T;L^2(\Omega,\Rnn))\,. \label{eq:conv-SigmaN}
  \end{align}
  For $\delta>0$ we have additionally $p\in L^\infty(0,T; W^{1,r}
  (\Omega,\Rnn))$ and, for $s=\frac{3r}{3-r}>2$ and arbitrary $1\leq
  q<\infty$
  \begin{align}
    \bar p^N\,&\weakstar\, p\quad\text{ in }L^\infty(0,T;W^{1,r}(\Omega,\Rnn))\,, 
    \label{eq:conv-bar_pN_H1}\\
    \hat p^N\,&\weakstar\, p\quad\text{ in
    }L^\infty(0,T;W^{1,r}(\Omega,\Rnn))\,,
    \label{eq:conv-hat_pN_H1}\\
    \bar p^N\,&\to\, p\quad\text{ in }L^q(0,T;L^s(\Omega,\Rnn))\,, 
    \label{eq:conv-bar_pN_strong}\\
    \hat p^N\,&\to\, p\quad\text{ in
    }L^q(0,T;L^s(\Omega,\Rnn))\,. \label{eq:conv-hat_pN_strong}
  \end{align}
  Moreover, we have
  \begin{align}
    \bar f^N\,&\to\, f \quad\text{ in }L^2(0,T;L^2(\Omega;\R^3))\,,
    \label{eq:fN-bar-1}\\
   \hat f^N\,&\to\, f \quad\text{ in }H^1(0,T;H^{-1}_D(\Omega;\R^3)) \,.
    \label{eq:fN-hat-1}
  \end{align}
\end{lemma}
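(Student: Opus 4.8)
The plan is to extract all convergences from the a priori bounds of Lemma~\ref{lem:estimates-discrete} by standard weak-compactness arguments, the only delicate point being the passage from the piecewise-constant to the piecewise-affine interpolants and the identification of the various limits as a single function $p$.

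First I would obtain the weak-$*$ limits for $\bar p^N$ and $\hat p^N$. The bound \eqref{eq:aprioritimediscrete} makes $(\bar p^N)_N$ bounded in $L^\infty(0,T;H_0(\Omega,\curl))$, which is the dual of the separable space $L^1(0,T;H_0(\Omega,\curl)^*)$, so a subsequence converges weak-$*$ to some $p$; likewise \eqref{eq:aprioritimediscrete2} gives a weak-$*$ limit of $\hat p^N$ and, because $(\hat p^N)_N$ is additionally bounded in $H^1(0,T;L^2(\Omega;\Rnn))$, a weakly convergent subsequence in that Hilbert space whose limit has $\partial_t\hat p^N\weakto\partial_t(\lim\hat p^N)$. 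The key point is that $\bar p^N$ and $\hat p^N$ have the same limit: from the definitions, $\hat p^N(t)-\bar p^N(t)$ equals $(\mu-1)(p_k-p_{k-1})$ on $(t_{k-1},t_k]$, so $\|\hat p^N-\bar p^N\|_{L^2(0,T;L^2)}^2\le\sum_k\tau\|p_k-p_{k-1}\|_{L^2}^2=\tau\sum_k\tau^2\|(p_k-p_{k-1})/\tau\|_{L^2}^2\le C\tau\to0$ by \eqref{eq:td-estimates}, so both interpolants converge strongly in $L^2(0,T;L^2)$ to the same object, forcing the weak-$*$ limits to coincide. Then \eqref{eq:conv-bar_pN}--\eqref{eq:conv-dt_pN} follow, and $p\in L^\infty(0,T;H_0(\Omega,\curl))\cap H^1(0,T;L^2(\Omega;\Rnn))$. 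The bound \eqref{eq:aprioritimediscrete} on $\bar\Sigma^N$ in the Hilbert space $L^2(0,T;L^2(\Omega;\Rnn))$ gives \eqref{eq:conv-SigmaN} directly.

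For $\delta>0$ I would use the extra bound $\sqrt[r]{\delta}\|\nabla\bar p^N\|_{L^\infty(0,T;L^r)}\le C$ from \eqref{eq:aprioritimediscrete}, which (since $\delta$ is a fixed positive number) bounds $\bar p^N$ and $\hat p^N$ in $L^\infty(0,T;W^{1,r}(\Omega;\Rnn))$; weak-$*$ compactness in this dual space, together with uniqueness of limits against the $L^2$-limit already identified, gives \eqref{eq:conv-bar_pN_H1}--\eqref{eq:conv-hat_pN_H1} and $p\in L^\infty(0,T;W^{1,r})$. For the strong convergence \eqref{eq:conv-bar_pN_strong}--\eqref{eq:conv-hat_pN_strong} I would invoke an Aubin--Lions--Simon type argument: $(\bar p^N)_N$ is bounded in $L^\infty(0,T;W^{1,r}(\Omega))$, the embedding $W^{1,r}(\Omega)\hookrightarrow L^s(\Omega)$ with $s=\frac{3r}{3-r}$ is continuous and the embedding into any $L^{s'}(\Omega)$ with $s'<s$ is compact, and the uniform time-translation estimate \eqref{eq:shifts-time-compactness-sec3} provides equicontinuity in time in $L^2(\Omega)$; by \cite[Theorem~1]{Simo87} one gets precompactness in $L^q(0,T;L^{s'}(\Omega))$ for every finite $q$ and $s'<s$, and then interpolating the strong $L^{s'}$ convergence against the uniform $W^{1,r}\hookrightarrow L^s$ bound upgrades this to strong convergence in $L^q(0,T;L^s(\Omega))$. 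The same applies to $\hat p^N$ since it satisfies the same bounds and shares the limit. I expect this interpolation-plus-compactness step to be the main technical obstacle, because one has to be careful about the borderline Sobolev exponent and the fact that compactness only holds strictly below $s$.

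Finally, the convergences \eqref{eq:fN-bar-1}--\eqref{eq:fN-hat-1} for the load interpolants are a routine consequence of the assumption \eqref{ass:f} and the definitions \eqref{eq:f_k}: the piecewise-constant interpolant $\bar f^N$ of the time averages $f_k$ converges to $f$ in $L^2(0,T;L^2(\Omega))$ by Lebesgue-point/density arguments (time averages converge in $L^2$), and the shifted piecewise-affine interpolant $\hat f^N$ of $(f_k)$ converges to $f$ in $H^1(0,T;H^{-1}_D(\Omega))$ because $\partial_t\hat f^N$ is a piecewise-constant interpolant of the difference quotients $(f_{k+1}-f_k)/\tau$, which converge to $\partial_t f$ in $L^2(0,T;H^{-1}_D(\Omega))$ by \eqref{eq:est-fN} and standard approximation theory; the time shift by one step is harmless in the limit. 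Collecting the subsequences (passing to a common one) and the identified limit $p$, together with $\Sigma$, yields all assertions of the lemma.
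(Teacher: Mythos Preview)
Your overall strategy matches the paper's: extract weak(-$*$) limits from the a~priori bounds of Lemma~\ref{lem:estimates-discrete}, identify the limits of $\bar p^N$ and $\hat p^N$ as one and the same $p$, and then invoke Aubin--Lions--Simon compactness in the case $\delta>0$. Your identification of the two interpolants via the direct estimate $\|\hat p^N-\bar p^N\|_{L^2(0,T;L^2)}^2\le C\tau$ is a clean and more elementary alternative to the paper's route, which first shows $(\hat p^N)_N$ precompact in $L^2(0,T;H_D^{-1}(\Omega))$ by Lions--Aubin and then cites a comparison lemma \cite[Lemma~3.2]{MR2653755} to transfer the limit to $\bar p^N$. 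One small slip: from $\hat p^N-\bar p^N\to 0$ strongly you cannot conclude that ``both interpolants converge strongly in $L^2(0,T;L^2)$'' --- in the case $\delta=0$ neither need converge strongly --- but you only need the weak-$*$ limits to coincide, and for that the vanishing of the difference is already enough.

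The one genuine gap is at the critical exponent $s=\tfrac{3r}{3-r}$. You correctly note that $W^{1,r}(\Omega)\hookrightarrow L^s(\Omega)$ is merely continuous, not compact, so Simon's theorem yields precompactness only in $L^q(0,T;L^{s'})$ for $s'<s$. Your proposed fix, ``interpolating the strong $L^{s'}$ convergence against the uniform $W^{1,r}\hookrightarrow L^s$ bound'', does not reach $L^s$: interpolation between strong convergence in $L^{s'}$ and mere boundedness in $L^s$ gives strong convergence only in $L^{s''}$ for $s''<s$ (a spatially concentrating sequence at a fixed time is a counterexample). The paper's own proof glosses over exactly this point --- it simply asserts that the Lions--Aubin lemma gives compactness in $L^q(0,T;L^s)$ --- so the deficiency is shared, not introduced by you. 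For the downstream application (Assumption~\ref{ass:convergence} only requires strong $L^2(0,T;L^2)$ convergence, and $s>2$), any $s'\in[2,s)$ suffices; the endpoint $s$ in the statement is stronger than what is actually used or, strictly speaking, proved.
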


\begin{proof}
  The a priori estimates \eqref{eq:aprioritimediscrete} and
  \eqref{eq:aprioritimediscrete2} allow to select a subsequence
  $N\to\infty$ and to find limits $p$ and $\Sigma$ as in the claim of
  the Lemma, such that \eqref{eq:conv-hat_pN}, \eqref{eq:conv-dt_pN}
  and \eqref{eq:conv-SigmaN}
  hold. 
  Furthermore, for a limit function $\bar p\in
  L^\infty(0,T;H_0(\Omega,\curl))$, there holds $\bar p^N\weakstar
  \bar p$ in $L^\infty(0,T;H_0(\Omega,\curl))$.

  The sequence $\hat p^N$ can be regarded as a sequence in the space
  $L^2(0,T; H_D^{-1}(\Omega))$. The Lions--Aubin Lemma \cite[Lemma
  7.7]{Roubicek-book} implies that $(\hat p^N)_N$ is pre-compact in
  this space. We therefore have the strong convergence $\hat p^N \to
  p$ in $L^2(0,T; H_D^{-1}(\Omega))$.  The strong convergence allows
  to compare the two interpolations, see \cite[Lemma 3.2]{MR2653755}
  or \cite[Lemma 11.3]{Schw13}; we conclude that $\bar p^N$ has the
  same limit and converges also strongly, $\bar p^N \to \bar p = p$ in
  $L^2(0,T; H_D^{-1}(\Omega))$.  In particular, we find
  \eqref{eq:conv-bar_pN}.

  In case that $\delta>0$ we can apply \eqref{eq:aprioritimediscrete}
  and conclude that $\nabla \hat p^N$ is uniformly bounded in
  $L^\infty(0,T;L^r(\Omega;\R^3))$. Since the only constant function
  in $W^{1,r}(\Omega;\R^3)\cap H_0(\Omega,\curl)$ is zero, by
  \cite[Sec.~6.16]{Alt12} a Poincar{\'e} inequality holds in this
  space and we deduce that $\hat p^N$ is uniformly bounded in
  $L^\infty(0,T;W^{1,r}(\Omega;\R^3))$. We conclude by a similar
  identification argument as above that \eqref{eq:conv-bar_pN_H1} and
  \eqref{eq:conv-hat_pN_H1} hold. 

  The estimate \eqref{eq:aprioritimediscrete2} together with the
  Lions--Aubin Lemma implies the compactness of the sequence $(\hat
  p^N)_N$ in $L^q(0,T;L^s(\Omega,\Rnn))$ and hence \eqref
  {eq:conv-hat_pN_strong}. Comparison of the two interpolations (as
  above, using \cite{MR2653755} or \cite{Schw13}) yields
  \eqref{eq:conv-bar_pN_strong}.

  The statements on $\bar f^N$ and $\hat f^N$ are elementary
  approximation properties for discretizations of a given function
  $f$.
\end{proof}

\begin{proof}[Proof of Theorem \ref{thm:main-gr} and Theorem 
  \ref{thm:main-eq}]
  The discrete solution property \eqref{eq:td-pk} yields that almost
  everywhere in $(0,T)$
  \begin{equation}
    \calE(\bar p^N;\bar f^N) + \calE^*(-\bar \Sigma^N;\bar f^N) \,=\,
    \la -\bar\Sigma^N, \bar p^N \ra\,. \label{eq:2-N}
  \end{equation}

  The energy inequality for the time-discrete approximate solutions
  was derived in \eqref{eq:4.2-1}. We claim that that inequality
  implies, for almost every $t\in (0,T)$,
  \begin{gather}
    \begin{aligned}
      &\calE(\bar p^N;\bar f^N)(t) + \int_0^t \left\{
        \calR\big(\partial_t \hat p^N\big)
        +\calR^*(\bar \Sigma^N)\right\}\,ds \\
      &\qquad\leq\, \calE(p_{0};f_0) - \int_0^t\inf_{\tilde u^N \in
        \frakE(\bar p^N(s);\bar f^N(s))} \la \partial_t\hat f^N(s),
      \tilde u^N \ra\,ds + o(1) \label{eq:3-N}
    \end{aligned}
  \end{gather}
  as $N\to \infty$.

  Indeed, let $u_k\in \frakE(p_k,f_k)$ be chosen arbitrarily. We
  consider first a time instance $t = t_{k_0} \in \tau \N$. For such a
  time instance $t$, the relation \eqref {eq:3-N} coincides with
  \eqref{eq:4.2-1} except for the very last time interval; the
  difference of the right hand sides is $\int_\Omega u_{k_0} \cdot
  (f_{k_0 + 1} - f_{k_0})$.

  Let us now consider an arbitrary time instance. For $t\in
  (t_{k_0-1},t_{k_0}]$ the left hand side of \eqref {eq:3-N} is not
  larger than the left hand side of \eqref {eq:4.2-1}, since $R$ and
  $R^*$ are non-negative. The difference of the right hand sides is
  \begin{equation*}
    \frac{t-t_{k_0-1}}{\tau} \int_\Omega u_{k_0} \cdot (f_{k_0 + 1} - f_{k_0}) 
    = o(1)\,,
  \end{equation*}
  for almost every $t$.  The smallness is a consequence of the
  following facts: (i) uniform bound $t-t_{k_0 - 1} \leq \tau$, (ii)
  uniform (in $k_0$, $N$) boundedness of $u_{k_0}$ in $H^1_D(\Omega)$,
  and (iii) the continuous embedding of $H^1(0,T;H^{-1}(\Omega))$ in
  $C^{0,\alpha}([0,T];H^{-1}(\Omega))$ for all $0<\alpha<\frac{1}{2}$
  and the uniform (in $k_0$, $N$) estimate
  \begin{equation*}
    \|f_{k+1}-f_k\|_{H^{-1}(\Omega)}\,
    =\, \Big\|\frac{1}{\tau}
    \int_{t_k}^{t_{k+1}}\big(f(s)-f(s-\tau)\big)\,ds\Big\|_{H^{-1}(\Omega)}\,
    \leq\, \|f\|_{C^{0,\alpha}([0,T];H^{-1}(\Omega))}\tau^\alpha.
  \end{equation*}

  The two relations \eqref {eq:2-N} and \eqref {eq:3-N} imply that the
  approximate solutions satisfy the solution properties of Assumption
  \ref{ass:approx-sol}.

  Lemma \ref{lem:tds-cpct} provides the convergence properties of
  Assumptions \ref{ass:convergence} and \ref{ass:convergence-d=0} in
  the two cases $\delta > 0$ and $\delta = 0$. We emphasize that, in
  the case $\delta=0$, with \eqref {eq:shifts-time-compactness-sec3}
  we have verified \eqref {eq:shifts-time-compactness} and with \eqref
  {eq:div-Sigma-est} we have verified the boundedness of $\nabla\cdot
  \bar\Sigma^N$.  We can apply the stability results of Propositions
  \ref{prop:delta_>0} and \ref {prop:delta_=0}. They yield that
  $(p,\Sigma)$ is a generalized solution of the visco-plasticity
  system.
\end{proof}

\appendix

\section{The marginal functional}
\label{sec.marginal-solutions}

We discuss some properties of the marginal functional $\calE_1$.
\begin{lemma}\label{lem:marginal}
  The functional $\calE_1(\cdot;f) :L^2(\Omega;\Rnn)\to\R$ from
  \eqref{eq:reduced-energy-1} has the following properties.
  \begin{enumerate}
  \item\label{it:lem2.5-1} For any $f\in H^{-1}_D(\Omega;\R^3)$ the
    functional $\calE_1(\cdot;f)$ is convex.
  \item\label{it:lem2.5-2} For any $p\in L^2(\Omega;\Rnn)$ and any
    $f\in H^{-1}_D(\Omega;\R^3)$ we have
    \begin{equation}
      \calE_1(p;f) \leq
      C(1+\|p\|^2_{L^2(\Omega;\Rnn)})\,. \label{eq:bound-E1}
    \end{equation}
  \item\label{it:lem2.5-2-1} For any $p\in L^2(\Omega;\Rnn)$, $f\in
    H^{-1}_D(\Omega;\R^3)$, $u\in \frakE(p,f)$, and $\lambda>0$ there
    exist $c_\lambda=c_\lambda(C_Q,\Omega,\Gamma_D) > 0$ and
    $C_\lambda=C_\lambda(C_Q,\Omega,\Gamma_D)$ such that
    \begin{equation}
      \calE_1(p;f) \geq \frac{1}{2}\calW_e(\nabla u,p) +
      c_\lambda\|u\|_{H^1_D(\Omega;\R^3)}^2- \lambda \|p\|^2_{L^2(\Omega)} -
      C_\lambda
      \|f\|_{H^{-1}_D(\Omega;\R^3))}^2\,. \label{eq:E1-bdd-below}
    \end{equation}
  \item\label{it:lem2.5-3} For any $p\in L^2(\Omega;\Rnn)$ and any
    $f\in H^{-1}_D(\Omega;\R^3)$ there exists an $u\in \frakE(p,f)$.
    Any $u\in \frakE(p,f)$ satisfies
    \begin{equation}
      \|u\|_{H^1_D(\Omega;\R^3)} \,\leq\, C \big(1+
      \|p\|_{L^2(\Omega;\Rnn)} +
      \|f\|_{H^{-1}_D(\Omega;\R^3)}\big)\,. \label{eq:est-minE1}
    \end{equation}
  \item\label{it:lem2.5-4-1} For any $p\in L^2(\Omega;\Rnn)$, any
    $f,g\in H^{-1}_D(\Omega;\R^3)$ and any $u\in \frakE(p,g)$ we have
    \begin{align}
      \calE_1(p;f)-\calE_1(p;g) \,\leq\, -\la f-g, u \ra
      \,. \label{eq:lem2.5-4-1}
    \end{align}
  \item\label{it:lem2.5-4} The map $\calE_1: L^2(\Omega;\Rnn)\times
    H^{-1}_D(\Omega;\R^3)\to\R$ is locally Lipschitz continuous. More
    precisely, for any $\Lambda_0$ there exists $C(\Lambda_0,Q)$ such
    that
    \begin{equation*}
      \big|\calE_1(p;f)-\calE_1(q;g)|\,\leq\, C(\Lambda_0,Q)\Big(\|p-q\|_{L^2(\Omega)}
      +\|f-g\|_{H_D^{-1}(\Omega)}\Big)
    \end{equation*}
    for any $f,g,p,q$ with 
    \begin{equation}\label{eq:locally-Lipschitz-bound}
      \|f\|_{H^{-1}_D(\Omega;\R^3)} + \|g\|_{H^{-1}_D(\Omega;\R^3)} +
      \|p\|_{L^2(\Omega)} +\|q\|_{L^2(\Omega)}\,\leq\,\Lambda_0\,.
    \end{equation}
  \item\label{it:lem2.5-4a} For any sequence $(p_j,f_j)_j$ and any
    $(p,f)$ in $L^2(\Omega;\Rnn)\times H^{-1}_D(\Omega;\R^3)$ with
    $p_j\weakto p$ in $L^2(\Omega;\Rnn)$ and $f_j\to f$ in
    $H^{-1}_D(\Omega;\R^3)$ we have
    \begin{equation}
      \calE_1(p;f) \,\leq\, \liminf_{j\to\infty}
      \calE_1(p_j;f_j)\,. \label{eq:marginal-lsc}
    \end{equation}
  \end{enumerate}
\end{lemma}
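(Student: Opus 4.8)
The plan is to read $\calE_1(\cdot\,;f)$ as the infimal projection over $\varphi$ of the functional $J(\varphi,p):=\calW_e(\nabla\varphi,p)-\la f,\varphi\ra$, which is \emph{jointly} convex: $\sym(\nabla\varphi-p)$ and $\nabla^s\varphi$ depend linearly on $(\varphi,p)$, $Q$ and $H_e$ are convex, and $-\la f,\varphi\ra$ is affine. Item~\ref{it:lem2.5-1} is then immediate, since the infimal projection of a jointly convex function over one variable is convex. For Item~\ref{it:lem2.5-2} I insert $\varphi=0$ as a competitor: since $0\in\dom(H_e)$ and $Q(\sym(-p))\le C|p|^2$ pointwise, one gets $\calE_1(p;f)\le \int_\Omega Q(\sym p)+H_e(0)|\Omega|\le C(1+\|p\|_{L^2(\Omega)}^2)$, i.e.\ \eqref{eq:bound-E1}.

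The technical core is the coercive lower bound of Item~\ref{it:lem2.5-2-1}. Starting from $\calE_1(p;f)=\calW_e(\nabla u,p)-\la f,u\ra$ for $u\in\frakE(p,f)$, I keep one half of $\calW_e$ on the right-hand side and estimate the remainder from below by $\tfrac12 c\,\|\sym(\nabla u-p)\|_{L^2}^2-\la f,u\ra$, discarding $H_e\ge0$ and using the lower growth of $Q$. Expanding the square and applying Young's inequality to the cross term $-2\la\sym\nabla u,\sym p\ra$ with a parameter tuned (in terms of $c$ and $\lambda$) so that the coefficient of $\|\sym p\|_{L^2}^2$ is exactly $\lambda$ — and $\|\sym p\|\le\|p\|$ — there remains a positive multiple of $\|\sym\nabla u\|_{L^2}^2$; Korn's second inequality on $H^1_D(\Omega;\R^3)$, valid since $\Gamma_D\neq\emptyset$ and $\Omega$ is Lipschitz, converts this into $c_\lambda'\|u\|_{H^1_D}^2$, and a further Young estimate absorbs $\la f,u\ra$ at the price of $C_\lambda\|f\|_{H^{-1}_D}^2$. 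This yields \eqref{eq:E1-bdd-below} with $c_\lambda,C_\lambda$ depending only on the elastic constants, $\Omega$, $\Gamma_D$ (and $\lambda$). Item~\ref{it:lem2.5-3} then follows: the functional $\varphi\mapsto J(\varphi,p)$ is convex and strongly lower semicontinuous (continuity of $Q$ plus the standard lower semicontinuity of the convex integral functional associated with $H_e$), hence weakly sequentially lower semicontinuous; it is coercive on $H^1_D$ by Item~\ref{it:lem2.5-2-1} and finite at $\varphi=0$, so the direct method gives a minimizer $u$. Combining the lower bound of Item~\ref{it:lem2.5-2-1} (dropping $\tfrac12\calW_e\ge0$) with Item~\ref{it:lem2.5-2} gives $c_\lambda\|u\|_{H^1_D}^2\le C(1+\|p\|^2+\|f\|^2)$, i.e.\ \eqref{eq:est-minE1}. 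Item~\ref{it:lem2.5-4-1} is a one-line comparison: for $u\in\frakE(p,g)$ one has $\calE_1(p;f)\le\calW_e(\nabla u,p)-\la f,u\ra=\calE_1(p;g)-\la f-g,u\ra$.

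For the local Lipschitz property, Item~\ref{it:lem2.5-4}, I combine Item~\ref{it:lem2.5-4-1}, applied in both directions (which controls the $f$-variable once one knows, by Item~\ref{it:lem2.5-3}, that the relevant minimizers are bounded by $C(\Lambda_0)$ under \eqref{eq:locally-Lipschitz-bound}), with a direct comparison in $p$: inserting a minimizer $u\in\frakE(q,g)$ into the definition of $\calE_1(p;g)$ and subtracting $\calE_1(q;g)$, the $H_e$-contribution cancels and there remains $\int_\Omega\big(Q(\sym(\nabla u-p))-Q(\sym(\nabla u-q))\big)$. A non-negative convex $Q$ with $Q(\zeta)\le C|\zeta|^2$ satisfies $|\partial Q(\zeta)|\le 4C(1+|\zeta|)$ — test the subgradient inequality at $\zeta+t\xi/|\xi|$ and optimize in $t>0$ — so that integral is bounded by $4C\int_\Omega(1+|\sym(\nabla u-p)|+|\sym(\nabla u-q)|)\,|\sym(p-q)|$, and Cauchy--Schwarz together with the $H^1$-bound on $u$ gives $C(\Lambda_0)\|p-q\|_{L^2(\Omega)}$; exchanging the roles of $p$ and $q$ yields the reverse inequality. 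Finally, Item~\ref{it:lem2.5-4a}: by Item~\ref{it:lem2.5-4} the map $p\mapsto\calE_1(p;f)$ is convex and continuous, hence weakly sequentially lower semicontinuous, so $\calE_1(p;f)\le\liminf_j\calE_1(p_j;f)$; since $(p_j)_j$ is bounded, Item~\ref{it:lem2.5-4-1} and the uniform bound on the associated minimizers give $|\calE_1(p_j;f_j)-\calE_1(p_j;f)|\le C\|f_j-f\|_{H^{-1}_D}\to0$, which upgrades the last inequality to \eqref{eq:marginal-lsc}. I expect the only non-routine points to be the parameter calibration plus the Korn step in Item~\ref{it:lem2.5-2-1} and the pointwise subgradient bound for $Q$ used in Item~\ref{it:lem2.5-4}; the remaining arguments are bookkeeping with convexity and the quadratic growth of $Q$.
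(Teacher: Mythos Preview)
Your proposal is correct and follows essentially the same route as the paper's proof: the same competitor $\varphi=0$ for the upper bound, the same Korn-plus-Young scheme for the coercive lower bound, the same direct-method argument for existence, the same one-line comparison for Item~\ref{it:lem2.5-4-1}, and the same ``convex $+$ continuous $\Rightarrow$ weakly lsc, then correct by $\|f_j-f\|$'' argument for Item~\ref{it:lem2.5-4a}. The only cosmetic differences are that the paper writes out the convexity of the infimal projection explicitly rather than invoking the general fact, and for Item~\ref{it:lem2.5-4} cites a reference for the local Lipschitz continuity of $\calQ$ on bounded sets of $L^2$ where you supply the pointwise subgradient bound $|\partial Q(\zeta)|\le 4C(1+|\zeta|)$ by hand.
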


\begin{proof}
  In the following, for any $p\in L^2(\Omega;\Rnn)$, we use the
  shortcut $p^s:=\sym(p)$.

  \noindent {\em Item \ref{it:lem2.5-1}:} Let $p,q\in
  L^2(\Omega;\Rnn)$ and $0<\lambda<1$ be arbitrary. From the convexity
  of $Q$ and $H_e$ we deduce
  \begin{align*}
    &\hspace*{-3mm}(1-\lambda)\calE_1(p;f) + \lambda\calE_1(q;f)\\
    = & \inf_{\fhi,\psi\in H^1_D(\Omega;\R^3)} \Big[\int_{\Omega}\Big(
    (1-\lambda)W_e(\nabla \fhi, p)+\lambda W_e(\nabla
    \psi, q)\Big) - \Big\langle f, ((1-\lambda)\fhi+\lambda\psi\big)\Big\rangle \Big]\\
    \geq & \inf_{\fhi,\psi\in H^1_D(\Omega;\R^3)}
    \Big[\int_{\Omega}\Big( Q\big((1-\lambda)\,(\nabla^s\fhi-p^s)
    +\lambda\,(\nabla^s\psi-q^s)\big)\\
    &\qquad\qquad + H_e\big((1-\lambda)\nabla^s\fhi
    +\lambda\nabla^s\psi\big)\Big)
    - \Big\langle f, \big((1-\lambda)\fhi+\lambda\psi\big)\Big\rangle \Big]\\
    = & \inf_{\tilde\fhi\in H^1_D(\Omega;\R^3)} \Big[\int_{\Omega}
    \Big( Q\big(\nabla^s\tilde\fhi-((1-\lambda)p^s+\lambda q^s))\big) 
    + H_e\big(\nabla^s\tilde\fhi\big)\Big) - \big\langle f, \tilde\fhi\big\rangle \Big]\\
    =&\, \calE_1\big((1-\lambda)p+\lambda q;f)\,.
  \end{align*}
  This proves the convexity of $\calE_1(\cdot;f)$. 

  \noindent {\em Item \ref{it:lem2.5-2}:} We use $u \equiv 0$ as a
  competitor to estimate the energy from above.  The growth assumption
  on $Q$ implies
  \begin{equation*}
    \calE_1(p;f) \,\leq\, \int_{\Omega} \big(Q(\sym\ p)+H_e(0)\big)
    \,\leq\, C(Q,H_e)\, (1+\|p\|^2_{L^2(\Omega)}).
  \end{equation*}

  \noindent {\em Item \ref{it:lem2.5-2-1}:} In any Hilbert space and
  for any $\mu > 0$ holds $\frac{1}{1+\mu}\|a\|^2 -2\langle a,b\rangle
  + (1+\mu)\|b\|^2\geq 0$; this inequality can be rearranged as
  $\|a-b\|^2 \geq \frac{\mu}{1+\mu} \|a\|^2 -\mu\|b\|^2$. Applying the
  latter inequality with $\mu=\frac{2\lambda}{c_Q}$, using the growth
  assumptions on $Q$ and Korn's inequality \cite[Korollar
  25.6]{Schw13}, we deduce that for any $\varphi\in
  H^1_D(\Omega;\R^3)$
  \begin{align}
    &\int_{\Omega} Q(\sym(\nabla \varphi-p))
    -2 \big|\big\langle f, \varphi\big\rangle \big| \notag\\
    &\quad \geq\,
    c_Q\|\sym (\nabla \varphi -p) \|^2_{L^2(\Omega)} 
    - 2\|f\|_{H^{-1}_D(\Omega;\R^3)}\| \varphi \|_{H^{1}_D(\Omega;\R^3)} \notag\\
    &\quad \geq \frac{2c_Q\lambda}{c_Q+2\lambda}\|\nabla^s \varphi \|^2_{L^2(\Omega)} -
    2\lambda \|p\|^2_{L^2(\Omega)} - \frac{c_Q\lambda}{c_Q+2\lambda}\|\nabla^s \varphi
    \|^2_{L^2(\Omega)}  \notag\\
    &\qquad -
    C(C_Q,\lambda,\Omega,\Gamma_D)\|f\|_{H^{-1}_D(\Omega;\R^3)}^2 \notag\\
    &\quad \geq \frac{c_Q\lambda}{c_Q+2\lambda}\|\nabla^s \varphi \|^2_{L^2(\Omega)} -
    2\lambda \|p\|^2_{L^2(\Omega)} -C(C_Q,\lambda,\Omega,\Gamma_D)\|f\|_{H^{-1}_D(\Omega;\R^3)}^2\,.
    \label{eq:E1-bdd-below1}
  \end{align}
  Now consider any $u\in \frakE(p,f)$. The definition of $\calE_1$
  yields $\calE_1(p;f) \ge \calW_e(\nabla u, p) - \la f, u \ra$ and we
  obtain from $H_e\ge 0$ and \eqref{eq:E1-bdd-below1}
  \begin{align*}
    &\calE_1(p;f)-\frac{1}{2}\calW_e(\nabla u,p) \\
    &\quad \geq
    \frac{1}{2}\int_{\Omega} Q(\sym(\nabla u-p))
    - \langle f, u\rangle \notag\\
    &\quad \geq \frac{c_Q\lambda}{2(c_Q+2\lambda)}\|\nabla^s u \|^2_{L^2(\Omega)} -
    \lambda \|p\|^2_{L^2(\Omega)} 
    -C(C_Q,\lambda,\Omega,\Gamma_D)\|f\|_{H^{-1}_D(\Omega;\R^3)}^2\,.
  \end{align*}
  By Korn's inequality, this yields \eqref{eq:E1-bdd-below}.

  \noindent {\em Item \ref{it:lem2.5-3}:} We consider the functional
  $\tilde\calE:H^1_D(\Omega;\R^3)\to\R\cup\{+\infty\}$, given by
  \begin{equation*}
    \tilde\calE(\fhi)\,:=\, \int_{\Omega} W_e(\nabla \fhi, p) -
    \big\langle f, \fhi\big\rangle\,.
  \end{equation*}
  The properties of $Q$ and $H_e$, in particular, their convexity,
  imply that $\tilde\calE$ is lower semicontinuous with respect to
  weak convergence in $H^1_D(\Omega;\R^3)$. By
  \eqref{eq:E1-bdd-below1}, the functional $\tilde\calE$ is also
  coercive.  The direct method of the Calculus of Variations ensures
  the existence of a minimizer $u\in H^1_D(\Omega;\R^3)$ of
  $\tilde\calE$, hence the existence of $u\in \frakE(p,f)$.
  
  The estimates \eqref{eq:bound-E1} and \eqref{eq:E1-bdd-below} yield
  \eqref{eq:est-minE1}.

  \noindent {\em Item \ref{it:lem2.5-4-1}:} By definition of $\calE_1$
  and by the minimization property $u\in \frakE(p,g)$ we have
  \begin{align*}
    \calE_1(p;f)-\calE_1(p,g) \,&\leq\, \calW_e(\nabla u,p) - \la
    f,u\ra  -\Big(  \calW_e(\nabla u,p) - \la g,u\ra\Big)\\
    &=\, -\la f-g, u\ra\,.
  \end{align*}
  
  \noindent {\em Item \ref{it:lem2.5-4}:} We consider a bound
  $\Lambda_0 > 0$ and study functions $f, g\in H^{-1}_D(\Omega;\R^3)$
  and $p, q\in L^2(\Omega;\Rnn)$ satisfying \eqref
  {eq:locally-Lipschitz-bound}.  We deduce from Item \ref{it:lem2.5-3}
  that for $\Lambda = C(1+ \Lambda_0)$
  \begin{equation*}
    \calE_1(p;f) \,=\, \inf_{\fhi\in
      H^1_D(\Omega;\R^3),\|\fhi\|\leq\Lambda} \int_{\Omega}
    W_e(\nabla \fhi, p) - \big\langle f, \fhi \big\rangle \,,
  \end{equation*}
  and accordingly for $\calE_1(q;g)$.

  We use once more the functional $\calQ$ of \eqref {eq:calQ}.  By the
  quadratic growth assumption \eqref{eq:cond-QR-1} on $Q$, this
  functional is Lipschitz continuous on $B(0,\Lambda_0+\Lambda)
  \subset L^2(\Omega;\Rnns)$ with some Lipschitz constant $L>0$
  depending only on $\Lambda_0+\Lambda$, see e.g.\,\cite[Theorem
  4.47]{FoLe07}. Therefore, for any $\fhi\in H^1_D(\Omega;\R^3)$ with
  $\|\fhi\|_{H^1_D(\Omega;\R^3)}\leq\Lambda$ we have
  \begin{align*}
    &\Big|\int_{\Omega} Q(\sym(\nabla\fhi- p)) - \big\langle f,
    \fhi\big\rangle - \Big(\int_{\Omega} Q(\sym(\nabla\fhi- q))
    - \big\langle g, \fhi\big\rangle \Big)\Big|\\
    &\qquad \leq\, L\|p-q\|_{L^2(\Omega)} +
    \Lambda\|f-g\|_{H^{-1}_D(\Omega;\R^3)}\,.
  \end{align*}
  This implies
  \begin{align*}
    &\calE_1(p;f)-\calE_1(q;g) \\
    &\qquad \leq\, \sup_{\fhi\in
      H^1_D(\Omega;\R^3),\|\fhi\|\leq\Lambda} \Big(\int_{\Omega}
    W_e(\nabla\fhi, p) - \big\langle f, \fhi\big\rangle
    -\Big[\int_{\Omega} W_e(\nabla\fhi, q) 
    - \big\langle g, \fhi \big\rangle\Big]\Big)\\
    &\qquad =\, \sup_{\fhi\in H^1_D(\Omega;\R^3),\|\fhi\|\leq\Lambda}
    \Big(\int_{\Omega} Q(\sym(\nabla\fhi- p)) - Q(\sym(\nabla\fhi- q))
    - \big\langle f-g, \fhi \big\rangle\Big)\\
    &\qquad \leq\, L\|p-q\|_{L^2(\Omega)} +
    \Lambda\|f-g\|_{H^{-1}_D(\Omega;\R^3)}\,,
  \end{align*}
  and similarly
  \begin{equation*}
    \calE_1(p;f)-\calE_1(q;g) \geq\, -L\|p-q\|_{L^2(\Omega)} -
    \Lambda\|f-g\|_{H^{-1}_D(\Omega;\R^3)}\,.
  \end{equation*}
  These inequalities prove the Lipschitz-continuity of $\calE_1$.

  \noindent {\em Item \ref{it:lem2.5-4a}:} The functional
  $\calE_1(\cdot;f)$ is lower semicontinuous under weak convergence in
  $L^2(\Omega;\R^3)$ because of Lipschitz continuity and the convexity
  of Item \ref{it:lem2.5-1}. Since the sequence $(p_j, f_j)_j$ is
  uniformly bounded in $L^2(\Omega;\R^3)\times H^{-1}_D(\Omega;\R^3)$
  we deduce from Item \ref{it:lem2.5-4-1}
  \begin{align*}
    \liminf_{j\to\infty} \calE_1(p_j;f_j) \,
    &\geq\, \liminf_{N\to\infty} \calE_1(p_j;f) 
    + \liminf_{j\to\infty}\Big(\calE_1(p_j;f_j)-\calE_1(p_j;f)\Big)\\
    &\geq\, \calE_1(p;f) -
    \limsup_{j\to\infty}C\|f_j-f\|_{H^{-1}_D(\Omega;\R^3)} =\,
    \calE_1(p;f)\,.
  \end{align*}
  This concludes the proof of the lemma.
\end{proof}

We can now show the equivalence of the solution concepts.

\begin{proof}[Proof of Proposition \ref{prop:solution_concepts}]

  \noindent {\em Item 1:} Let $(p,\Sigma)$ be a solution according to
  Definition \ref{def:solution2}.  Our aim is to show that there
  exists a solution $(u,p,\Sigma)$ according to Definition
  \ref{def:solution1}.

  The existence statement before \eqref{eq:est-minE1} yields that, for
  almost every $s\in (0,T)$, there exists $u(s)\in \frakE(p(s),f(s))$.

  We claim that the map $s\mapsto u(s)$ is measurable.  We choose
  sequences $(p^N)_N$, $(f^N)_N$ of simple functions $p^N:(0,T)\to
  L^2(\Omega,\Rnn)$ and $f^N:(0,T)\to H^{-1}_D(\Omega,\R^3)$ such that
  $p^N(t)\to p(t)$ in $L^2(\Omega,\Rnn)$ and $f^N(t)\to f(t)$ in
  $H^{-1}_D(\Omega,\R^3)$ for almost all $t\in (0,T)$.  For $t\in
  (0,T)$ we choose an element $u^N(t)\in\frakE(p^N(t),f^N(t))$, such
  that $u^N:(0,T)\to H^1_D(\Omega,\R^3)$ is a simple function. By the
  uniform boundedness and lower-semicontinuity properties
  \eqref{eq:est-minE1} and \eqref{eq:marginal-lsc} we deduce that
  $u^N(t)\weakto u(t)$ in $H^{1}_D(\Omega,\R^3)$ for almost all $t\in
  (0,T)$. Hence, with respect to the weak topology in
  $H^1_D(\Omega,\R^3)$, the function $s\mapsto u(s)$ can pointwise
  almost everywhere be approximated by simple functions. The Pettis
  measurability theorem \cite[Theorem 2.104]{FoLe07} yields that
  $u:(0,T)\to H^{1}_D(\Omega,\R^3)$ is measurable.
	
  The estimate \eqref{eq:est-minE1} implies $u\in
  L^2(0,T;H^1_D(\Omega;\R^3))$, hence $(u,p,\Sigma)$ meets the
  regularity requirements \ref{it:R1} of Definition
  \ref{def:solution1}. The stability property \ref{it:S1} is a
  consequence of $u(t)\in \frakE(p(t),f(t))$ for almost all $t\in
  (0,T)$.
  
  We next prove that \ref{it:F2} implies \ref{it:F1}.  We consider
  arbitrary functions $f\in H^{-1}_D(\Omega)$, $p,q\in
  L^2(\Omega;\Rnn)$, and minimizers $u\in\frakE(p,f)$,
  $v\in\frakE(q,f)$. By definition of the energies and by minimality,
  there holds
  \begin{align*}
    &\calE(p;f) -\calE(q;f)  \\
    &\qquad =\, \calE_1(p;f) -\calE_1(q;f) +\calW_p(p)-\calW_p(q)\\
    &\qquad =\, \calW_e(\nabla u,p)-\la f,u\ra 
    -\calW_e(\nabla v,q)+\la f,v\ra +\calW_p(p)-\calW_p(q)\\
    &\qquad \geq\, \calW_e(\nabla u,p) -\calW_e(\nabla u,q)
    +\calW_p(p)-\calW_p(q)\\
    &\qquad =\, \calW(\nabla u,p) -\calW(\nabla u,q)\,.
 \end{align*}
 Hence, for arbitrary $\Sigma\in L^2(\Omega;\Rnn)$,
 \begin{align*}
   \calE(p;f) +\calE^*(-\Sigma;f) \,&=\, \sup_{q\in L^2(\Omega;\Rnn)}
   \Big(\calE(p;f) -\calE(q;f) +\la -\Sigma,q\ra\Big)  \\
   &\geq\, \sup_{q\in L^2(\Omega;\Rnn)}\Big(\calW(\nabla u,p) 
   -\calW(\nabla u,q) +\la -\Sigma,q\ra\Big) \\
   &=\, \calW(\nabla u,p) +\calW^*(\nabla u,-\Sigma)\,.
 \end{align*}
 Inserting into property \ref{it:F2}, we find that for almost all
 $t\in (0,T)$
 \begin{equation*}
   \calW(\nabla u,p) +\calW^*(\nabla u,-\Sigma)\,\leq\, \la-\Sigma,p\ra\,.
 \end{equation*}
 Since the opposite inequality always holds, we obtain equality and
 hence \ref{it:F1}.

 In order to conclude \ref{it:E1}, we first note that $u(t)\in
 \frakE(p(t),f(t))$ implies
  \begin{equation*}
    \calE (p(t);f(t))
    \,=\,  \calW(\nabla u(t),p(t))  - \la f(t),u(t)\ra\,.
  \end{equation*}
  With this equality, \ref{it:E2} implies \ref{it:E1}; the infimum in
  \eqref{eq:E2} is attained by the uniqueness assumption.

  \smallskip
  \noindent {\em Item 2:}
  To prove that regular weak solutions $(u,p,\Sigma)$ are strong
  solutions, we first observe that the stability property \ref{it:S1}
  implies (under the additional regularity assumptions) the
  Euler--Lagrange equation
  \begin{align*}
    0\,=\, -\nabla\cdot \left(\sym \nabla_{F} W_e(\nabla
      u(t),p(t))\right) - f(t)
  \end{align*}
  for almost all $t\in (0,T)$.  Choosing $\sigma$ as in
  \eqref{eq:sig}, this is the balance of forces \eqref{eq:u}.  The
  back-stress relation \ref{it:F1} implies directly \eqref{eq:Sigma}.

  It remains to derive the flow rule \eqref{eq:p}.  We write the first
  term on the left hand side of \ref{it:E1} as an integral over its
  time derivative
  \begin{align}
    &\frac{d}{ds} \left(\calW(\nabla u(s), p(s)) -\int_\Omega
      f(s)\cdot u(s)\right)
    \notag\displaybreak[2]\\
    &\qquad =\, \big\la \partial_t p(s),\nabla_p \calW(\nabla u(s),
    p(s))\big\ra
    -\langle \partial_t f(s), u(s)\rangle\notag\displaybreak[2]\\
    &\qquad =\, -\big\la \partial_t p(s),\Sigma(s)\big\ra
    -\langle \partial_t f(s), u(s)\rangle\,,
    \label{eq:98723452}
  \end{align}
  where terms containing $\del_t u(s)$ cancel by balance of forces.
  Inserting into \ref{it:E1} yields
  \begin{align*}
    \int_0^T \big\la \partial_t p(s), -\Sigma(s)\big\ra +
    \calR(\partial_t p(s)) +\calR^*(\Sigma(s))\,ds \,\leq\, 0\,.
  \end{align*}
  By definition of $\calR^*$, the integrand is nonnegative, hence
  \begin{align*}
    \big\la \partial_t p(s),-\Sigma(s)\big\ra + \calR(\partial_t p(s))
    +\calR^*(\Sigma(s)) \,=\, 0
  \end{align*}
  for almost all $s\in (0,T)$.  This yields \eqref{eq:p}.

  \smallskip
  \noindent {\em Item 3:}
  We now consider a strong solution $(u,\sigma,p,\Sigma)$ to
  \eqref{eq:u}--\eqref{eq:p}.  By the balance of forces \eqref{eq:u}
  and \eqref{eq:sig}, $u(t)$ is a critical point of the convex map
  $\varphi\mapsto \calW( \nabla\varphi, p(t)) - \la f(t),\varphi\ra$
  for almost all $t\in (0,T)$. It hence satisfies the minimality
  \ref{it:S1}. Property \eqref{eq:Sigma} of the back-stress $\Sigma$
  yields \ref{it:F1}. Balance of forces \eqref{eq:u} and
  \eqref{eq:sig} allows to calculate as in \eqref {eq:98723452}.  We
  obtain, using \eqref{eq:p},
  \begin{align*}
    &\frac{d}{ds} \left(\calW(\nabla u(s), p(s)) -\int_\Omega
      f(s)\cdot u(s)\right)
    + \calR(\partial_t p(s)) +\calR^*(\Sigma(s)) \displaybreak[2]\\
    &\qquad =\,-\langle \partial_t f(s), u(s)\rangle\,.
  \end{align*}
  Integrating this equality implies \ref{it:E1}.
\end{proof}


\end{document}